\newtheorem{theorem}{Theorem}[section]
\newtheorem*{theorem*}{Theorem}
\newtheorem{lemma}[theorem]{Lemma}
\newtheorem{corollary}[theorem]{Corollary}
\newtheorem{proposition}[theorem]{Proposition}
\newtheorem{theoremx}{Theorem}
\theoremstyle{definition}
\theoremstyle{remark}
\def\nh{{\frac{n+1}{2}}}
\def\Ups{{\Upsilon}}
\def\ups{{\upsilon}}
\def\varups{{\varpi}}
\def\sF{{\mathscr {F}}}
\def\intslash{\rlap{\kern  .32em $\mspace {.5mu}\backslash$ }\int}
\def\qsl{{\rlap{\kern  .32em $\mspace {.5mu}\backslash$ }\int_{Q_x}}}
\def\emph#1{{\it #1 }}
\def\diam{{\text{\rm diam}}}
\def\cf{{\it cf}}
\def\gcd{{\text{\rm gcd}}}
\def\dist{{\text{\rm dist}}}
\def\supp{{\text{\rm supp }}}
\def\rad{{\text{\rm rad}}}
\def\inn#1#2{\langle#1,#2\rangle}
\def\noi{\noindent}
\def\card{\text{\rm card}}
\def\lc{\lesssim}
\def\gc{\gtrsim}
\def\ga{\gamma}
\def\Ga{\Gamma}
\def\eps{\varepsilon}
\def\ep{\epsilon}
\def\ka{\kappa}
\def\la{\lambda}
\def\La{\Lambda}
\def\fI{{\mathfrak {I}}}
\def\fK{{\mathfrak {K}}}
\def\fQ{{\mathfrak {Q}}}
\def\fV{{\mathfrak {V}}}
\def\fW{{\mathfrak {W}}}
\def\fn{{\mathfrak {n}}}
\def\bbE{{\mathbb {E}}}
\def\bbN{{\mathbb {N}}}
\def\bbP{{\mathbb {P}}}
\def\bbR{{\mathbb {R}}}
\def\bbT{{\mathbb {T}}}
\def\bbZ{{\mathbb {Z}}}
\def\bbone{{\mathbbm {1}}}
\def\cA{{\mathcal {A}}}
\def\cB{{\mathcal {B}}}
\def\cC{{\mathcal {C}}}
\def\cE{{\mathcal {E}}}
\def\cF{{\mathcal {F}}}
\def\cR{{\mathcal {R}}}
\def\cS{{\mathcal {S}}}
\def\cV{{\mathcal {V}}}
\font \roman = cmr10 at 10 true pt
\def\be#1{\begin{equation}\label{#1}}
\def\ee{\end{equation}}
\def\bas{\begin{align*}}
\def\eas{\end{align*}}
\def\bi{\begin{itemize}}
\def\ei{\end{itemize}}
\def\dist{{\hbox{\roman dist}}}
\def\supp{{\hbox{\roman supp}}}
\def\dim{{\hbox{\roman dim}}}
\def\eps{\varepsilon}
\def\emph#1{{\it #1}}
\def\textbf#1{{\bf #1}}
\def\intslash{\rlap{\kern  .32em $\mspace {.5mu}\backslash$ }\int}
\def\qsl{{\rlap{\kern  .32em $\mspace {.5mu}\backslash$ }\int_{Q_x}}}
\def\diam{{\text{\rm diam}}}
\def\Xint#1{\mathchoice
{\XXint\displaystyle\textstyle{#1}}%
{\XXint\textstyle\scriptstyle{#1}}%
{\XXint\scriptstyle\scriptscriptstyle{#1}}%
{\XXint\scriptscriptstyle\scriptscriptstyle{#1}}%
\!\int}
\def\XXint#1#2#3{{\setbox0=\hbox{$#1{#2#3}{\int}$ }
\vcenter{\hbox{$#2#3$ }}\kern-.6\wd0}}
\def\dashint{\Xint-}
\newcommand{\Be}{\begin{equation}}
\newcommand{\Ee}{\end{equation}}
\begin{document}
\title[Convolution powers of Salem measures]{Convolution powers of Salem
 measures \\ with applications}
\author{Xianghong Chen and Andreas Seeger}
\address{X. Chen\\Department of Mathematics\\University of Wisconsin-Madison\\Madison, WI 53706, USA}
\curraddr{Department of Mathematics\\University of Wisconsin-Milwaukee, Milwaukee, WI 53201-0413, USA}
\email{chen242@uwm.edu}
\address{A. Seeger\\Department of Mathematics\\University of Wisconsin-Madison\\Madison, WI 53706, USA}
\curraddr{}
\email{seeger@math.wisc.edu}
\thanks{Supported in part  by the National Science Foundation}
\subjclass[2010]{Primary 42A85, 42B99, 42B15, 42A61}
\keywords{Convolution powers, Fourier restriction, Salem sets, Salem measures,
 random sparse sets, Fourier multipliers of Bochner-Riesz type}
\date{\today}

\begin{abstract}
We study the regularity of convolution powers for measures supported on
Salem sets, and  prove related  results on Fourier restriction and Fourier multipliers.  In particular we show
that  for $\alpha$ of the form
${d}/{n}$, $n=2,3,\dots$ there exist $\alpha$-Salem measures for which the $L^2$ Fourier restriction theorem holds in the range $p\le \frac{2d}{2d-\alpha}$.
 The results rely on ideas of  K\"orner.
We extend some of his constructions to obtain upper regular $\alpha$-Salem measures, with sharp regularity results for $n$-fold convolutions for all $n\in \bbN$.
\end{abstract}
\maketitle

\section{Introduction}
Given a finite positive  Borel  measure $\mu$  on $\bbR^d$ satisfying
the condition $$|\widehat \mu(\xi)|=O(|\xi|^{-b})$$
for some $b>0$, the Fourier transform maps
$L^p(\bbR^d)$ to $L^2(d\mu)$
for some $p>1$. This is the Fourier restriction  phenomenon discovered by Stein
in the 1960's. Much research in Fourier analysis has been done
regarding the case of $\mu$ being surface measure on the sphere where sharp
results are due to Tomas and Stein \cite{tomas}, \cite{stein}.
A general version of
Tomas' theorem is due to Mockenhaupt \cite{mockenhaupt} and also
Mitsis \cite{mitsis}. These authors showed that  under the above assumption
and the
additional regularity condition   $$\mu(B)=O(\diam(B)^a)$$
for all balls $B$ the Fourier transform maps
$L^p(\bbR^d)$ to $L^2(d\mu)$  for $1\le p<p_{a,b}= \frac{2(d-a+b)}{2(d-a)+b}$. It was shown in \cite{bak-seeger}  that
 the result is also  valid for $p=p_{a,b}$.
The  Fourier decay assumption implies that the
regularity condition holds for $a=b$.
Moreover, If the support of $\mu$ is contained in a set of Hausdorff dimension $\alpha$ then $b\le \alpha/2$, and $a\le \alpha$.
See
\cite[ch.8]{wolff}, \cite{mitsis} for these facts.
Of particular interest are measures supported on sets $E$
of Hausdorff dimension $\alpha$
for which the Fourier decay condition holds for all $b<\alpha/2$;
such sets are commonly called {\it Salem sets}.
 The existence of Salem sets is  due to
Salem \cite{salem}, see also the book by Kahane  \cite{kahane}, and  papers by Kaufman \cite{kaufman}, Bluhm (\cite{bluhm1}, \cite{bluhm2}) and
 \L aba and Pramanik \cite{laba-pramanik} for other constructions.

Here we are also interested in the special  Salem sets $E$ which carry probability measures
for which  the endpoint bound $|\widehat \mu(\xi)|= O(|\xi|^{-\dim(E)/2})$ holds  for large $\xi$, and make the following definition.

\smallskip

\noindent {\bf Definition.}
{\it
(i) A
Borel probability  measure $\mu$  is called an {\it $\alpha$-Salem measure} if
it is compactly supported, the support of $\mu$
is contained in a set of Hausdorff dimension $\alpha$, and if
\Be
\label{intro:sharpFourierdecay}
\sup_{\xi\in \bbR^d} (1+|\xi|)^{\alpha/2}|\widehat\mu(\xi)|<\infty\,.
\Ee

(ii) An $\alpha$-Salem measure is called  upper regular (or $\alpha$-upper regular)
if
\Be\label{intro:alphareg}
\sup_{B} \frac{\mu(B)}{\diam(B)^\alpha} <\infty
\Ee
where the sup is taken over all balls.

}

Examples of upper regular $\alpha$-Salem measures  were  constructed by
K\"orner (\cf. \cite{koerner1978}),
see also the work by the first author \cite{chenn} for various  refinements.

If $\mu$ is an upper regular $\alpha$-Salem measure then
the Fourier transform maps
$L^p(\bbR^d)$ to $L^2(\mu)$
for $1\le p \le \frac{4d-2\alpha}{4d-3\alpha}$,
by the result in \cite{bak-seeger}.
In analogy to results and conjectures for surface measure on the sphere,  Mockenhaupt
conjectured that the Fourier transform should map
$L^p(\bbR^d)$ to $L^1(\mu)$  for the larger range
$1\le p< \frac{2d}{2d-\alpha}$.
By 
\cite[Prop. 3.1]{mitsis} such an $L^p\to L^2$ result  cannot hold for $p>\frac{2d}{2d-\alpha}$.
Furthermore, he remarked that for suitable examples there is a  possibility that even  the
 stronger Stein-Tomas $L^p\to L^2(\mu)$ bound could hold in this range.
Recently Hambrook and \L aba \cite{hambrook-laba}
 gave,  for a dense set of $\alpha$'s (and $d=1$),
 examples of  Salem sets of dimension $\alpha$,  which show that
 the $p$ range
 for the $L^p\to L^2(\mu)$ bound in \cite{bak-seeger} cannot be improved in general.
Their examples carry randomness and arithmetic
structures at different scales. The first author  \cite{chenn} has extended this idea to provide, among other things,
for all $\alpha \in [0,1]$
examples of  upper regular $\alpha$-Salem measures on the real line,
 for which $\cF$ does not map $L^p$ to $L^2(\mu)$ for any $p> \frac{4d-2\alpha}{4d-3\alpha}$.

 These examples still do not exclude the Mockenhaupt scenario of a
 larger $p$-range
 for the $L^2$ restriction estimate for other types of Salem measures.
The question was explicitly posed  in a recent survey paper by
\L aba \cite{laba-icm}.
We show an optimal result when
 $\alpha$ is of the form $d/n$ with some integer $n$.

\begin{theoremx}\label{intro:theorem-A}
 Given $\alpha={d}/{n}$ where $n\in\mathbb N$, $n\ge 2$, there exists
an upper regular $\alpha$-Salem measure
so that $\cF: L^p(\bbR^d)\to L^2(\mu)$ is bounded in the optimal range   $1\le p\le  \frac{2d}{2d-\alpha}$.
\end{theoremx}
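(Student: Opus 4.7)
The plan is to apply a $TT^*$/Plancherel/Cauchy-Schwarz argument that reduces the critical restriction estimate to a pointwise bound on $\mu^{*n}$, and then to construct an appropriate $\mu$ via a K\"orner-type random construction. Since $\alpha=d/n$, we have $p=\frac{2d}{2d-\alpha}=\frac{2n}{2n-1}$ and $p'=2n$, so by the standard $TT^*$ duality the desired estimate $\cF:L^p(\bbR^d)\to L^2(\mu)$ is equivalent to the extension bound
\[
\|\widehat{f\,d\mu}\|_{L^{2n}(\bbR^d)}\le C\,\|f\|_{L^2(\mu)}, \qquad f\in L^2(\mu).
\]
Since $2n$ is an even integer, Plancherel's theorem converts the left side into
\[
\|\widehat{f\,d\mu}\|_{L^{2n}}^{2n}=\|\widehat{(f\,d\mu)^{*n}}\|_{L^2}^{2}=\|(f\,d\mu)^{*n}\|_{L^2}^{2},
\]
and a pointwise Cauchy-Schwarz in the $n$-fold convolution integral yields
\[
|(f\,d\mu)^{*n}(x)|^{2}\le \mu^{*n}(x)\cdot(|f|^{2}\,d\mu)^{*n}(x).
\]
Integrating in $x$ gives $\|(f\,d\mu)^{*n}\|_{L^2}^{2}\le \|\mu^{*n}\|_{L^\infty}\cdot \|f\|_{L^2(\mu)}^{2n}$. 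The endpoint $p=\frac{2n}{2n-1}$ therefore reduces to constructing an upper regular $\alpha$-Salem measure $\mu$ whose $n$-fold self-convolution has bounded density; the full range $1\le p\le \frac{2n}{2n-1}$ then follows by interpolation with the trivial bound $\cF:L^1\to L^2(\mu)$.

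For the construction I would extend the K\"orner probabilistic scheme. At each dyadic scale $\delta_k\to 0$ one selects independently a carefully chosen random subset of a $\delta_k$-grid and defines $\mu$ as a weak limit of the normalized counting measures of the nested selected cubes. Tuning the number of selected cubes at each scale to be essentially $\delta_k^{-\alpha}$ gives the $\alpha$-dimensional mass distribution needed for upper regularity, while Bernstein/Khintchine-type exponential moment bounds applied to the random trigonometric sums $\widehat\mu(\xi)$ force $|\widehat\mu(\xi)|\lesssim |\xi|^{-\alpha/2}$ almost surely (a logarithmic loss can be absorbed by oversampling the number of cubes at each scale). These two facts together establish the Salem and upper-regularity properties.

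The main obstacle is verifying $\|\mu^{*n}\|_{L^\infty}<\infty$ almost surely. The Fourier bound only gives $|\widehat{\mu^{*n}}(\xi)|\lesssim |\xi|^{-n\alpha/2}=|\xi|^{-d/2}$, which sits precisely at the endpoint of $L^1$ integrability and therefore cannot be turned into a pointwise bound for $\mu^{*n}$ by Fourier inversion. Instead one must study $\mu^{*n}(x)$ directly as a random variable built from the scale-wise independent selections. The critical identity $n\alpha=d$ ensures that the expected density $\bbE[\mu^{*n}(x)]$ is uniformly bounded in $x$, which geometrically corresponds to the statement that a generic point of $\bbR^d$ is hit by the correct (finite) number of $n$-tuples of points from the random support, modulo the grid scale. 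The deviation $\mu^{*n}(x)-\bbE[\mu^{*n}(x)]$ must then be controlled by higher moment estimates that exploit the independence between scales, combined with a Chebyshev/union-bound argument on a sufficiently fine discretization of $\bbR^d$ and a continuity estimate that passes from grid points to all $x$. This moment analysis, together with the careful calibration of the selection probabilities to the dimension $\alpha=d/n$, is the technically delicate core of the argument.
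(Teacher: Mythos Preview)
Your reduction step is exactly the paper's: by duality the endpoint estimate is equivalent to $\|\widehat{f\,d\mu}\|_{2n}\lesssim\|f\|_{L^2(\mu)}$, and the Plancherel/Cauchy--Schwarz argument you give is precisely the inequality \eqref{chenineq} from \cite{chen}, so that the whole theorem reduces to producing an upper regular $\alpha$-Salem measure with $\mu^{*n}\in L^\infty$. You also correctly isolate the key difficulty, namely that the Fourier decay $|\widehat\mu(\xi)|\lesssim|\xi|^{-\alpha/2}$ only yields $|\widehat{\mu^{*n}}(\xi)|\lesssim|\xi|^{-d/2}$, which is the non-integrable endpoint and so cannot be upgraded to $\mu^{*n}\in L^\infty$ by inversion.

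Where you diverge from the paper is in the construction. You sketch a direct multi-scale random Cantor construction: independent selections at a sequence of grid scales $\delta_k$, with moment bounds exploiting independence \emph{between} scales, followed by a union bound over a net. The paper instead proves a \emph{single-scale} probabilistic result (Proposition~\ref{prop:point-masses}): for $P\approx N^\beta$ i.i.d.\ uniform points in $\Gamma_N^d$, the normalized counting measure simultaneously has the right Fourier decay, the right ball regularity for low convolution powers, and---crucially---$\mu^{*\ell}(\{u\})$ is within a small multiplicative error of $N^{-d}$ for $\ell\ge d/\beta$. The convolution-power bound is obtained by writing $\sigma_m^{*\ell}(\{u\})-m^\ell N^{-d}$ as a telescoping sum of martingale differences and applying Hoeffding/Azuma (Proposition~\ref{prop:bounded-convolution-full-range}), which is morally your ``moment estimate'' but executed at a fixed scale. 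These single-scale building blocks are then smoothed, periodized (Lemma~\ref{lemma:periodprop}, the key algebraic identity $(F_mP)^{*n}=F_m^{*n}P^{*n}$), and fed into a K\"orner-style Baire category argument on a complete metric space of pairs $(K,\mu)$; the measure emerges as a generic element rather than as an explicit martingale limit.

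Each approach has its trade-offs. Your route is conceptually direct and close in spirit to \cite{shmerkin-suomala}, but the phrase ``a logarithmic loss can be absorbed by oversampling'' hides a genuine tension: oversampling improves Fourier decay but worsens the dimension and the upper-regularity exponent, so one cannot simply pay logarithms this way without an additional mechanism (the paper uses the auxiliary gauge function $\psi$ for exactly this purpose). The paper's Baire-category route avoids having to track the interaction between scales explicitly, at the cost of a less constructive statement; it also yields the stronger Theorem~\ref{intro:theorem-C}, giving sharp regularity for \emph{all} convolution powers simultaneously, not just the critical $n$-th one.
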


\noi{\it Remarks.}

(i) Shmerkin and Suomala \cite{shmerkin-suomala} have, independently, obtained a similar result, for $d=1$, $\alpha>1/2$. Their method  also covers the cases $d=2,3$, $d/2<\alpha\le 2$.  Their approach is quite different from the methods used here.

(ii)
 It would be of great interest to find Ahlfors-David regular $\alpha$-Salem measures, i.e. besides
\eqref{intro:sharpFourierdecay}, \eqref{intro:alphareg}  we would also have
a lower bound $\mu(B)\gc \rad(B)^{\alpha}$ for all balls  $B$ with radius $\le 1$ which are  centered in the support of $\mu$. This question has been raised  by Mitsis \cite{mitsis}, see also the list of problems in Mattila \cite{mattilaproblems}.
We remark that the examples by
Shmerkin and Suomala \cite{shmerkin-suomala} for the non-endpoint $L^2\to L^4$ restriction estimate  (with $\alpha>1/2$)  are  Ahlfors-David regular.
However the measures satisfying Theorem \ref{intro:theorem-A}
are
 necessarily {\it not} Ahlfors-David $\alpha$-regular, see
\S\ref{sec:restrmult}.


\medskip

 A variant of Theorem \ref{intro:theorem-A}
 can be used to derive some new results on a
class of Fourier  multipliers of Bochner-Riesz type as considered by
Mockenhaupt \cite{mockenhaupt}.
In what follows we let $M_p^q$ to be the space of all $m\in \cS'(\bbR^d)$ for which
$f\mapsto \cF^{-1}[m\widehat f]$ extends to a bounded operator from $L^p(\bbR^d)$ to $L^q(\bbR^d)$. The norm on $M^q_p$ is the operator norm, i.e.
$$\|m\|_{M^q_p}=\sup_{\substack {f\in \cS(\bbR^d)\\ \|f\|_p \le 1 }}
\big\|\cF^{-1}[m \widehat f]\big\|_q  \,.$$
In \cite{mockenhaupt} Mockenhaupt introduced a class of Fourier multipliers
 associated with general measures  which reflect the properties of
Bochner-Riesz multipliers in the case when  $\mu$ is the surface measure on a smooth hypersurface.

Given a compactly supported  $\alpha$-upper regular
 Borel measure, $\la>\alpha-d$ and $\chi\in C^\infty_c (\bbR^d)$ the function
\Be\label{intro:mla}
m_\la(\xi)=\int_{\mathbb R^d}\chi(\xi-\eta)|\xi-\eta|^{\la-\alpha}d\mu(\eta)
\Ee
is well defined as an $L^1$ function.
In  \S\ref{sec:restrmult} we prove among other things

\begin{theoremx}\label{intro:multiplier}
Let  $\alpha={d}/{n}$ where $n\in\mathbb N$, $n\ge 2$ and $\lambda>0$. There exists
an upper regular $\alpha$-Salem measure on $\bbR^d$
 so that for $1\le p<\frac{2d}{2d-\alpha}$, $p\le q\le 2$
we have $$m_\la\in M_p^q \,\,
\iff \,\,\la \ge
 d\big(\frac 1q-\frac 12\big)- \frac{d-\alpha}2.$$
\end{theoremx}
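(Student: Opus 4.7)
The plan is to establish both directions of the equivalence in Theorem~\ref{intro:multiplier}. Throughout, set $\lambda_0(q) := d(1/q - 1/2) - (d - \alpha)/2$; note that this critical exponent depends only on $q$.

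For sufficiency, I combine Theorem~\ref{intro:theorem-A} with endpoint analysis and Riesz--Thorin interpolation. At the endpoint $(p, q) = (p_0, 2)$ with $p_0 = 2d/(2d-\alpha)$: by the positivity of $m_\lambda = \psi_\lambda * \mu$ (where $\psi_\lambda(\xi) = \chi(\xi) |\xi|^{\lambda - \alpha}$) together with $\alpha$-upper regularity (which yields $\|m_\lambda\|_\infty < \infty$ for $\lambda > 0$), we reduce $\|T_\lambda f\|_2^2$ to an integral of $(\psi_\lambda * |\widehat f|^2)$ against $d\mu$. A change of variables $\xi = \eta - \zeta$ followed by Fubini rewrites this as
\[
\int \psi_\lambda(\zeta)\, \|\widehat{g_\zeta}\|_{L^2(\mu)}^2\, d\zeta, \qquad g_\zeta(x) = f(x)\, e^{2\pi i \zeta \cdot x}.
\]
Since modulation preserves $L^{p_0}$, Theorem~\ref{intro:theorem-A} gives $\|\widehat{g_\zeta}\|_{L^2(\mu)} \lesssim \|f\|_{p_0}$ uniformly in $\zeta$, and combined with $\|\psi_\lambda\|_1 < \infty$ this yields $T_\lambda : L^{p_0} \to L^2$. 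At the dual endpoint $(p, q) = (1, q)$: since $T_\lambda$ is convolution with $K_\lambda \check\mu$ where $K_\lambda = \cF^{-1}\psi_\lambda$, Young's inequality bounds $\|T_\lambda\|_{L^1 \to L^q} \lesssim \|K_\lambda \check\mu\|_q$; combining the Riesz decay $|K_\lambda(x)| \lesssim (1+|x|)^{\alpha-\lambda-d}$ with the Salem decay $|\check\mu(x)| \lesssim (1+|x|)^{-\alpha/2}$ shows the kernel lies in $L^q$ precisely for $\lambda > \lambda_0(q)$. Interpolation fills out the claimed $(p, q, \lambda)$ region.

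For necessity, at $p = 1$ the converse to Young gives a sharp obstruction: $T_\lambda : L^1 \to L^q$ is equivalent (by testing against approximations to the Dirac delta) to $K_\lambda \check\mu \in L^q$, which by the sharp kernel decay requires $\lambda > \lambda_0(q)$. At interior $(p, q)$ with $p > 1$, I will reduce to the sharpness of Theorem~\ref{intro:theorem-A}: if $m_\lambda \in M^q_p$ held for some $\lambda < \lambda_0(q)$, then a Knapp-type or adjoint argument (relating the multiplier bound back to restriction via composition with an appropriate kernel) would yield a restriction bound $\cF: L^{p'} \to L^2(\mu)$ valid for some $p' > p_0$, contradicting the sharpness asserted in Theorem~\ref{intro:theorem-A}.

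The main obstacle is making the interior necessity rigorous: carefully relating a hypothetical improved multiplier bound back to the restriction constant for $\mu$ is the key step, and one may additionally need a limiting argument at the boundary $\lambda = \lambda_0(q)$ in the sufficiency direction, since the kernel estimate in the $p = 1$ endpoint just fails to be in $L^q$ at the boundary.
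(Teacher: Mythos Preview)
Your sufficiency argument has a genuine gap in the interpolation step. The convex hull of your two endpoints $(1/p_0,1/2)$ and $(1,1/q_1)$ (for $q_1>q_*(\lambda)$) is only the triangle with vertices $(1/p_0,1/2)$, $(1,1/2)$, $(1,1/q_*(\lambda))$, whereas the theorem requires the full region $\{(1/p,1/q): 1/p_0<1/p\le 1,\ 1/2\le 1/q\le\min(1/p,1/q_*(\lambda))\}$. These do not coincide: for instance with $d=1$, $\alpha=1/2$, $\lambda=1/8$ one has $p_0=4/3$, $q_*=8/7$, and the diagonal point $p=q=5/4$ lies in the target region but strictly above your interpolation triangle. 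The paper avoids this by proving a direct $L^p\to L^q$ bound for every dyadic piece $h=\eta_r*\mu$ via a Fefferman--Stein argument (Theorem~\ref{prop:multiplier}): one decomposes $\cF^{-1}h$ into pieces with spatial support at scale $2^n/r$, uses the Bernstein inequality $\|\cdot\|_q\lesssim (2^n/r)^{d(1/q-1/2)}\|\cdot\|_2$, and then Plancherel together with the restriction hypothesis. This yields $\|h\|_{M^q_p}\lesssim r^{d-d/q}A_p\,\varpi(r)^{1/2}$ without any interpolation in $(p,q)$. Summing dyadically then gives the endpoint $\lambda=\lambda_0(q)$ as well, provided one uses the measure from Theorem~\ref{intro:theorem-C} with a gauge $\psi$ satisfying $\int_0^1\psi(t)^{1/2}\,dt/t<\infty$, so that $\varpi(r)\lesssim\psi(r)r^\alpha$; your $L^1\to L^q$ kernel estimate cannot reach this endpoint.

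Your necessity argument is also off track. The paper's argument works uniformly in $p$ and is much simpler than a reduction to restriction sharpness: testing $T_\lambda$ on any Schwartz $f$ with $\widehat f\equiv 1$ on $\supp m_\lambda$ shows $m_\lambda\in M^q_p\Rightarrow K_\lambda:=\cF^{-1}m_\lambda\in L^q$, with no role for $p$. The remaining step is a \emph{lower} bound on $|K_\lambda|$, not the upper bound you invoke. Using positivity of $\widehat\chi$ one has $|K_\lambda(x)|\gtrsim |x|^{\alpha-\lambda-d}|\widehat\mu(x)|$, and combining this with the Salem upper bound $|\widehat\mu(x)|\lesssim|x|^{-\gamma/2}$ gives $|K_\lambda(x)|\gtrsim|\widehat\mu(x)|^{1+2(\lambda+d-\alpha)/\gamma}$. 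Thus $K_\lambda\in L^q$ forces $\widehat\mu\in L^r$ with $r=q(1+2(\lambda+d-\alpha)/\gamma)$, and the classical fact that $\widehat\mu\in L^r$ for $\mu$ supported on a set of Hausdorff dimension $\alpha$ requires $r\ge 2d/\alpha$ then yields $\lambda\ge\lambda_0(q)$ upon letting $\gamma\to\alpha$. Your ``sharp kernel decay'' assertion implicitly assumes $|\widehat\mu(x)|\sim|x|^{-\alpha/2}$ pointwise, which is false for these random measures; and the proposed Knapp-type reduction for $p>1$ is unnecessary and not substantiated.
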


\medskip

Let $\mu^{*n}$ be the 
convolution of $n$ copies of $\mu$; more precisely we set  $\mu^{*0}=\delta_0$ (the Dirac measure at $0$), $\mu^{*1}=\mu$ and
$$\mu^{*n}=\mu*\mu^{*(n-1)}$$ for $n\ge 2$.
The proof of  the Fourier restriction result of Theorem  \ref{intro:theorem-A}
for $\alpha=d/n$
is based  on a regularity result for
the self convolutions  of suitable Salem measures and the
inequality
\Be\label{intro:conv-n}
\int|\widehat {g\mu}|^{2n} d\xi \lc
\|\mu^{*n}\|_\infty  \Big(\int|g(x)|^2 d\mu\Big)^n\,.
\Ee
\eqref{intro:conv-n} is  a special case  of an inequality in  \cite{chen},
closely related to a result
by Rudin \cite{rudin}.

For $n=2$,
 K\"{o}rner \cite{koerner2008}  proved
 the existence of a compactly supported probability measure on  $\bbR$,
supported on
a set of Hausdorff dimension $1/2$  for which $\mu*\mu$ is a continuous
function.
Moreover,
given $\frac{1}{2}\le\alpha<1$, there exists a Borel probability measure $\mu$ on $\mathbb R$
supported on a compact set of Hausdorff dimension $\alpha$
such that $\mu*\mu\in C_c^{\alpha-1/2}(\mathbb R)$.
These substantially improved  and extended
previous results by Wiener-Wintner \cite{ww} and Saeki \cite{saeki} on convolution squares for singular measures. Note that  by taking adjoints  inequality \eqref{intro:conv-n} for $n=2$ shows that $\cF: L^{4/3}\to L^2(\mu)$;
for $\alpha<2/3$ this  yields a range  larger
than  $[1, \frac{4-2\alpha}{4-3\alpha}]$, the largest range that could be
proved from \cite{bak-seeger}.
It is not stated in K\"orner's paper that the measures constructed there
have the appropriate  Fourier  decay  properties
but as  we shall  see this is not hard  to accomplish.

For integers $n\ge 0$ let $C^n(\bbR^d)$ be the space of functions whose derivatives up to order $n$ are continuous and bounded; the norm is given by
$$\|f\|_{C^n}=\sum_{|\alpha|\le n}\|\partial^\alpha f\|_\infty.$$
Let
$\psi:[0,\infty)\rightarrow [0,\infty)$ be  a nondecreasing  bounded
function  satisfying
\Be\label{psilimit}
\lim_{t\to 0}t^{-\eps}\psi(t)=\infty, \quad \forall \eps>0
\Ee
and, for some $C_\psi>0$,
\Be\label{psidoubling}
\psi(t)\le C_\psi\psi(t/2), \quad t>0.
\Ee
For a function $f$ on $\mathbb R^d$, define
\Be\label{eq:omega}\omega_{\rho,\psi}(f)=\sup_{\substack{x,y\in\mathbb R^d \\ x\neq y}}
\frac{|f(x)-f(y)|}{|x-y|^\rho\psi(|x-y|)}\Ee
and
$$C^{\rho,\psi}(\mathbb R^d)=\{f\in C(\mathbb R^d): \omega_{\rho,\psi}(f)<\infty\}.$$
If $\rho\ge 1$, define
$$C^{\rho,\psi}(\mathbb R^d)=\{f\in C^{\lfloor \rho \rfloor}(\mathbb R^d): \partial^{\beta} f\in C^{\rho - {\lfloor \rho \rfloor},\psi}(\mathbb R^d), |\beta|={\lfloor \rho \rfloor} \}.$$

For $0<\rho<1$ the choice of $\psi(t)=1$ yields the usual H\"older spaces.
Only the definition of $\psi$ for small $t$ is relevant.
Other suitable choices for $\psi$ are
(i) $\psi(t)= \exp(-\sqrt{\log t^{-1}})$ for $t\le e^{-1}$,
(ii)  $\psi(t)= 1/(\log t^{-1})$ for $t\le e^{-1}$,  or
(iii)  $\psi(t)= 1/(\log \log t^{-1})$ for $t\le e^{-e}$.

We extend    K\"{o}rner's
constructions to prove the following result for higher convolution powers of upper regular $\alpha$-Salem measures.

\begin{theoremx}\label{intro:theorem-C}
Given $d\ge 1$ and $0<\alpha<d$, there exists a Borel probability measure $\mu$ on $\mathbb R^d$
satisfying the following properties.

(i) $\mu$ is supported on a compact set of Hausdorff and lower Minkowski dimension $\alpha$.

(ii)  For all $\xi\in\mathbb R^d, |\xi|\ge 1$,
$$|\widehat\mu(\xi)|\lesssim \psi(|\xi |^{-1})|\xi|^{-\alpha/2}.$$

(iii)  For all $x\in\mathbb R^d, 0<r<1$, $1\le n<d/\alpha
$,
$$\mu^{*n}(B(x,r))\lesssim \psi(r) r^{n\alpha}\,.$$

(iv) For $n\ge d/\alpha$,
$$\mu^{*n}\in C^{\frac{n\alpha-d}{2},\psi}_c(\mathbb R^d).$$
\end{theoremx}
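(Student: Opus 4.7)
Follow and extend K\"orner's hierarchical random construction \cite{koerner1978, koerner2008}. Fix a rapidly growing integer sequence $N_0<N_1<\cdots$ and build a nested sequence of random compact sets $E_0\supset E_1\supset\cdots$, where $E_k$ is a union of approximately $N_k^{\alpha}$ closed cubes of side $N_k^{-1}$ in $[0,1]^d$. The transition from $E_{k-1}$ to $E_k$ is defined by partitioning each cube $Q$ of $E_{k-1}$ into $(N_k/N_{k-1})^d$ subcubes of side $N_k^{-1}$ and selecting approximately $(N_k/N_{k-1})^{\alpha}$ of them uniformly at random, independently across different parent cubes. Set $\mu_k$ equal to the normalized Lebesgue measure on $E_k$, and take $\mu=\lim_k \mu_k$ in the weak-$*$ sense; the growth rate of $N_k$ will be chosen sufficiently fast relative to $\psi$ to absorb the later union bounds.

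For the Fourier estimate (ii) I would write $\widehat{\mu_k}(\xi)-\widehat{\mu_{k-1}}(\xi)$ as a sum of independent mean-zero random contributions, one per cube of $E_{k-1}$. A Bernstein or Khintchine-type concentration inequality then gives $|\widehat{\mu_k}(\xi)|\lesssim N_k^{-\alpha/2}$ with high probability for $|\xi|\sim N_k$. A union bound over a sufficiently fine net in $\xi$, combined with the Borel--Cantelli lemma and the doubling property of $\psi$, converts this into the desired pointwise bound on a full-probability event. The lower Minkowski (hence Hausdorff) dimension $\alpha$ asserted in (i) follows from the explicit box count $\approx N_k^{\alpha}$ at scale $N_k^{-1}$, as in the standard analysis of Cantor-type random sets.

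The ball estimates (iii) would be obtained by controlling, for $r\sim N_k^{-1}$, the combinatorial quantity
\[
\#\{(Q_1,\dots,Q_n)\in E_k^n:\;(Q_1+\cdots+Q_n)\cap B(x,r)\neq \emptyset\}.
\]
The expectation of this count is $\lesssim r^{n\alpha}$ whenever $n\alpha<d$, and independence of the random selections yields concentration; a net argument in $(x,r)$ together with Borel--Cantelli produces the uniform bound with the loss $\psi(r)$. For (iv), the same combinatorial count with $n\alpha\ge d$ gives a bound of the form $\psi(r)\,r^d$, which corresponds to a density uniformly bounded up to the factor $\psi$; telescoping between successive levels $\mu_k^{*n}$ and $\mu_{k+1}^{*n}$ promotes this to genuine continuity of $\mu^{*n}$ with modulus $\psi$. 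For $n\alpha>d$ one extracts the H\"older exponent $(n\alpha-d)/2$ by applying the same type of estimate to the increments $\mu^{*n}(x)-\mu^{*n}(y)$, where integrability is restored and a Fourier-side reading is available.

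The main obstacle is the endpoint $n\alpha=d$: the Fourier bound $|\widehat{\mu}(\xi)|^n\lesssim\psi(|\xi|^{-1})^n|\xi|^{-d}$ is not integrable and so cannot by itself yield continuity. Continuity there must be read off from the physical-side ball estimates in (iii)--(iv), with the logarithmic losses carefully propagated across scales using only the doubling assumption on $\psi$. A secondary difficulty is that all four conclusions must be arranged to hold on the \emph{same} random instance; for this I would show that each failure event has probability summable in $k$ and intersect the complements via Borel--Cantelli, which in turn dictates how fast $N_k$ and the number of selected subcubes must grow.
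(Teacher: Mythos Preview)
Your approach is genuinely different from the paper's. The paper does \emph{not} build a single random Cantor-type measure and invoke Borel--Cantelli. Instead it sets up a complete metric space $\fW$ of pairs $(K,\mu)$ already satisfying the Fourier decay \eqref{FTd-decay} and the convolution regularity \eqref{bounded-convolution}, and proves via Baire category that the subset with $\underline{\dim}_{\rm M}(K)=\alpha$ is residual. Randomness enters only in the proof of a single-scale building block (Proposition~\ref{prop:point-masses}): i.i.d.\ uniform points in $\Gamma_N^d$, whose averaged Dirac measure is then mollified and \emph{periodized} (Lemmas~\ref{lemma:convol-2}--\ref{lemma:multiplier-F}). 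The periodization is the key structural device: by Lemma~\ref{lemma:periodprop}(ii), if $F_m=\text{Per}_{2m+1}f$ and $P_m$ is a trigonometric polynomial with frequencies in a fundamental domain, then $(F_mP_m)^{*n}=(F_m)^{*n}(P_m)^{*n}$, so the $n$-fold convolution of the new approximant factors exactly into the old approximant and the new random layer. This decoupling is what makes the iteration close in the $C^{\rho_n,\psi}$ norm; your nested-cube construction has no analogue of it, and the cross terms in $\mu_k^{*n}-\mu_{k-1}^{*n}$ would have to be handled by a different mechanism.

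The substantive gap in your plan is part (iv). Your proposed ``Fourier-side reading'' cannot deliver the H\"older exponent $\tfrac{n\alpha-d}{2}$: from $|\widehat\mu(\xi)|\lesssim\psi(|\xi|^{-1})|\xi|^{-\alpha/2}$ you get $|\widehat{\mu^{*n}}(\xi)|\lesssim|\xi|^{-n\alpha/2}$ up to $\psi$-factors, which places $\mu^{*n}$ in $B^2_{(n\alpha-d)/2,\infty}$ but only in $C^{(n\alpha-2d)/2}$ after Sobolev embedding, off by $d/2$. (Equivalently, $\int|\xi|^{\rho}|\widehat{\mu^{*n}}(\xi)|\,d\xi$ diverges at $\rho=\tfrac{n\alpha-d}{2}$.) The sharp exponent is \emph{not} a consequence of Fourier decay; it requires a direct square-root cancellation estimate for the density of $\mu^{*n}$ about its mean. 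In the paper this is Proposition~\ref{prop:bounded-convolution-full-range}: for the random point mass $\sigma_m$, one shows $|\sigma_m^{*\ell}(\{u\})-m^\ell N^{-d}|\lesssim (m^\ell N^{-d})^{1/2}(\log N)^{O(1)}$ uniformly in $u$, via an inductive martingale/Hoeffding argument on the telescoping differences $\sigma_j^{*\ell}-\sigma_{j-1}^{*\ell}$. Something of exactly this strength would be needed in your hierarchical setting as well, and it is the heart of the matter; your sketch (``same combinatorial count'' plus ``telescoping promotes this to continuity'') does not supply it. Your treatment of (i)--(iii) is plausible, but until you produce the analogue of Proposition~\ref{prop:bounded-convolution-full-range} for the nested random sets, the plan does not reach the conclusion of (iv).
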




Note that under the dimensional restriction the Fourier decay exponent,
the upper regularity exponents $n\alpha$  and the H\"older exponent
$\frac{n\alpha-d}2$ for $\mu^{*n}$ are all optimal (cf. \S \ref{optimalityHoelder} below for the latter).

\emph{Notation}.  We write $\square_1\lesssim \square_2$ to indicate that $\square_1\le C\square_2$ for some constant $0<C<\infty$ independent
of the testing inputs which will usually be clear from the context.
For a measurable subset $E$ of $\bbR^d$ or $\bbT^d$ we let $|E|$ denote  the Lebesgue measure of $E$.


\emph{Structure of the paper.}
The proof of Theorem \ref{intro:theorem-C}
is given in the next two sections.
The restriction and multiplier theorems are considered in \S
\ref{sec:restrmult}.

\section{K\"orner's Baire category approach}
This section contains the extensions of K\"orner's arguments adapted and extended to yield
Theorem \ref{intro:theorem-C}. The results will be stated in the periodic setting and followed by a relatively straightforward  transference argument.

To fix notations, we write $\mathbb T=\mathbb R/\mathbb Z$ and $\mathbb T^d=\mathbb T\times\cdots\times\mathbb T$. We occasionally denote by $\la$ the uniform probability measure on $\mathbb T^d$. $\la$ is usually identified with the function $1$ and we shall also identify a continuous function $g$ with the measure $g\la$.  A subset $J\subset\mathbb T$ is called an interval if it is connected. A rectangle is of the form
$R=J_1\times\cdots\times J_d$ where $J_i$ are intervals; $R$ is called a cube if these intervals have the same length.
If $\mu$ is a finite Borel measure on $\mathbb T^d$, the Fourier transform of $\mu$ is defined as
$$\widehat\mu(r)=\int_{\mathbb T^d}e^{-2\pi i r\cdot t}d\mu(t)$$
where $r\in\mathbb Z^d$. Here as usual we have identified $\mathbb T^d$ with $[0,1)^d$. Note that $\widehat\mu(0)=\mu(\mathbb T^d)$ and $\widehat{\mathbb\la}(r)=\delta_0(r)$. Let $\mu$ and $\nu$ be two finite Borel measures on $\mathbb T^d$, $\mu*\nu$ is the finite Borel measure on $\mathbb T^d$ with Fourier transform $\hat\mu(r)\hat\nu(r)$.
Finally, we equip $\mathbb T^d$ with the usual group structure and the intrinsic metric which will be denoted by $$|x-y|:=\Big( \sum_{i=1}^d |x_i-y_i|^2\Big)^{1/2}$$
where $x=(x_1,\cdots,x_d),y=(y_1,\cdots,y_d)$ and $|x_i-y_i|$ denotes the intrinsic metric on $\mathbb T$. We will also fix an orientation of $\mathbb T$ so that derivatives
are uniquely defined.
With this distance the expression $\omega_{\rho,\psi}(f)$  in \eqref{eq:omega}
and the spaces $C^{\rho,\psi}$
can be defined in the same way on $\bbT^d$.

For each integer $n\ge d/\alpha$ we  fix a finite smooth partition of unity on $\mathbb T^d$, indexed by $\imath\in \fI_n$
\Be\label{eq:partofunity}\mathcal O^{(n)}=\{\chi^{(n)}_\imath\}_{\imath\in \fI_n}
\Ee
so that each $\chi^{(n)}_\imath$ is supported on a cube of side length smaller than $(2n)^{-1}$.

\subsection{\it A metric space} Let $\fK$ be the collection of closed subsets of $\bbT^d$
which form a complete metric space with respect to the Hausdorff distance
\Be\label{Hausdorff}
\begin{aligned}d_{\fK}(K_1, K_2))&=
\sup_{x\in K_1}\dist (x, K_2)+\sup_{y\in K_2}\dist(y, K_1)\\&=
\sup_{x\in K_1}\inf_{y\in K_2}| x-y |+\sup_{y\in K_2}\inf_{x\in K_1}|x-y|;
\end{aligned}
\Ee
see e.g. \cite{stein-sh4}. We now consider metric spaces of pairs $(K,\mu)$ where $K$ is a compact subset of $\bbT^d$ and $\mu$ is a nonnegative Borel measure supported on $E$.
These measures are assumed to satisfy
\Be\label{FTd-decay}\lim_{|r|\to\infty} \frac{|r|^{\alpha/2}|\widehat\mu(r)|}{\psi(\tfrac 1{|r|})} =0,
\Ee
Moreover, for $n\ge d/\alpha$  and  for each $n$-tuple
$i=(\imath_1,\dots,\imath_n) \in \fI_n^n$, the $n$-fold convolution
$(\chi^{(n)}_{\imath_1}\mu)*\cdots*(\chi^{(n)}_{\imath_n}\mu)$ is absolutely continuous and we have
\begin{equation}\label{bounded-convolution}
\big(\chi^{(n)}_{\imath_1}\mu\big)*\cdots*\big(\chi^{(n)}_{\imath_n}\mu\big)=g_{\mu,i}^{(n)}\la, \, \text{ with }
g_{\mu,i}^{(n)}\in C^{\frac {n\alpha-d }2,\psi}\,.
\end{equation}
We let $\fW$ be the set of all $(K,\mu)$ where $K\subset \bbT^d$ is closed,
$\mu$ is a nonnegative Borel measure supported in $K$ satisfying
\eqref{FTd-decay} and \eqref{bounded-convolution}. A metric on $\fW$ is given by
\begin{align}
\label{eq:Wmetric}
&d_{\fW}\big((K_1,\mu_1),(K_2,\mu_2)\big)
\\
\notag
&=d_{\fK}(K_1,K_2)+|\hat\mu_1(0)-\hat\mu_2(0)|+\sup_{r\in\mathbb Z^d\backslash\{0\}}
\frac{|r|^{\alpha/2}|\hat\mu_1(r)-\hat\mu_2(r)|}{\psi(|r|^{-1})}
\\
\notag
&\quad+
\sum_{n\ge d/\alpha} 2^{-n}\min\Big\{1,\sum_{i\in \fI_n^n}\|g_{\mu_1,i}^{(n)}-g_{\mu_2,i}^{(n)}\|_{C^{\frac{n\alpha-d}{2},\psi}}\Big\}.
\end{align}

\begin{lemma} \label{le:fW-metric}
(i)  $(\fW, d_\fW)$ is a complete metric space.

(ii) For every nonnegative $C^\infty$ function $f$ and every compact set $K$ such that $K\supset \supp(f)$  the pair $(K,f)$ belongs to $\fW$.

(iii) Let $\fV$ be the subspace of $\fW$ consisting of $(K,\mu)$  satisfying
\begin{equation}\label{bounded-convolution-low}
\mu^{*n}(Q)\le \psi(|Q|)|Q|^{n\alpha/d}
\end{equation}
for all cubes $Q$ and $1\le n<d/\alpha$. Then $\fV$
(with the metric inherited from $\fW$) is a closed subspace
of $\fW$.

(iv) Let $\fV_0$ be
the subset of $\fV$ consisting of
pairs $(K, g)\in \fV$ with
$g\in C^\infty(\bbT^d)$ and let
$\overline \fV_0$ be the closure of $\fV_0$  in $\fV$ with respect to the metric $d_{\fW}$.
Then $\overline \fV_0$ is a complete metric space and for every nonnegative $g\in C^\infty(\bbT^d)$ there is a $C>0$ so that for all compact $K\supset \supp(g)$ the pair $(K,g/C)$ belongs to  $\overline \fV_0$.
\end{lemma}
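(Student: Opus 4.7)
My plan is to handle the four parts of Lemma~\ref{le:fW-metric} in order, with the completeness statement in (i) carrying the bulk of the work. For (i), I first check that $d_{\fW}$ is a metric: symmetry, non-negativity, and the triangle inequality for the first three summands are routine, while the final weighted series over $n\ge d/\alpha$ converges thanks to the $2^{-n}\min\{1,\cdot\}$ truncation. Separation uses that Fourier coefficients determine a finite Borel measure on $\bbT^d$ (together with the Hausdorff term for $K$). For a Cauchy sequence $\{(K_m,\mu_m)\}$: the closed sets $K_m$ converge in the Hausdorff distance to some $K\in\fK$; the masses $\widehat\mu_m(0)$ are bounded, so after passing to a weak-$*$ convergent subsequence $\mu_m\to\mu$, the limit is a non-negative finite Borel measure. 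Weak-$*$ convergence applied to the test functions $e^{-2\pi\ic r\cdot t}$ identifies $\widehat\mu(r)$ with the pointwise limit forced by the third summand of $d_\fW$, and this in turn pins down the full-sequence limit. The inclusion $\supp(\mu)\seq K$ follows because any continuous $\phi$ vanishing on a neighborhood of $K$ vanishes on $K_m$ for large $m$. Property \eqref{FTd-decay} for $\mu$ comes from the uniform-in-$r$ Cauchy bound combined with decay for any single $\mu_m$. For \eqref{bounded-convolution}, the densities $g^{(n)}_{\mu_m,i}$ are Cauchy in $C^{(n\alpha-d)/2,\psi}$, so converge to some $g^{(n)}_i$ in that norm; iterated weak-$*$ continuity of convolution then identifies $(\chi^{(n)}_{\imath_1}\mu)*\cdots*(\chi^{(n)}_{\imath_n}\mu)$ with $g^{(n)}_i\la$, giving the required regularity.

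Parts (ii) and (iii) are much shorter. In (ii), smoothness of $f$ gives $\widehat f(r)=O(|r|^{-N})$ for all $N$, so \eqref{FTd-decay} is automatic, and since $n$-fold convolutions of smooth functions are smooth, \eqref{bounded-convolution} holds for any modulus of continuity. In (iii), I show that \eqref{bounded-convolution-low} is preserved under $d_\fW$-limits. Weak-$*$ convergence $\mu_m\to\mu$ iterates to $\mu_m^{*n}\to\mu^{*n}$ weak-$*$; by the Portmanteau theorem, $\mu^{*n}(Q)\le\liminf_m\mu_m^{*n}(Q)\le\psi(|Q|)|Q|^{n\alpha/d}$ for every open cube $Q$, and outer approximation by slightly enlarged open cubes together with continuity of $|Q|$ and of $\psi$ extends the bound to closed cubes.

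For (iv), completeness of $\overline\fV_0$ is immediate from its being a closed subspace of the complete metric space $\fV\seq\fW$. The constructive claim reduces, for non-negative $g\in C^\infty(\bbT^d)$ and compact $K\supset\supp(g)$, to finding $C>0$ (independent of $K$) with $(K,g/C)\in\fV_0$. The $\fW$-conditions are automatic from smoothness as in (ii), and \eqref{bounded-convolution-low} becomes, after the coarse estimate $(g/C)^{*n}(Q)\le C^{-n}\|g^{*n}\|_\infty\,|Q|$, the requirement $C^{-n}\|g^{*n}\|_\infty\le\psi(|Q|)|Q|^{n\alpha/d-1}$ for all cubes $Q\seq\bbT^d$ and $1\le n<d/\alpha$. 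Since $n\alpha/d-1<0$, the hypothesis \eqref{psilimit} forces $\psi(t)t^{n\alpha/d-1}\to\infty$ as $t\to0$, while for $|Q|$ away from $0$ the expression is trivially bounded below, so the right-hand side has a positive infimum and any sufficiently large $C$ suffices. The step I expect to require the most care is the identification in (i) that the $C^{(n\alpha-d)/2,\psi}$-norm limit of $g^{(n)}_{\mu_m,i}$ is genuinely the density of $(\chi^{(n)}_{\imath_1}\mu)*\cdots*(\chi^{(n)}_{\imath_n}\mu)$, which requires reconciling the weak-$*$ convergence of the measures with the stronger norm convergence of the smooth densities and verifying that the two limit procedures commute.
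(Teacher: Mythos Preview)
Your proposal is correct and follows the same approach the paper indicates: the paper's own proof is essentially a one-line reference (``the theorem of Banach--Alaoglu is used \ldots\ a straightforward modification of the arguments in \cite{koerner2008}, \cite{koerner}''), and you have filled in precisely those details, using weak-$*$ compactness to extract the limit measure, Fourier coefficients to identify it uniquely, and the iterated weak-$*$ continuity of convolution to pin down the densities $g^{(n)}_i$. One minor point: in (iii) you invoke ``continuity of $\psi$'' to pass from open to general cubes, but $\psi$ is only assumed nondecreasing with the doubling property \eqref{psidoubling}; this is harmless since one may replace $\psi$ by its right-continuous modification without affecting any of the hypotheses or conclusions, but you should say so rather than assert continuity.
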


\begin{proof}
To identify a limit measure  of a Cauchy sequence the theorem of Banach-Alaoglu is used. The proof is a straightforward modification of the arguments in
\cite{koerner2008}, \cite{koerner}, see also \cite{chan}, \cite{koernerlecture}
and \cite{stein-sh4}.
\end{proof}

In order to prove a version of Theorem \ref{intro:theorem-C} we wish to show that
there are pairs $(K,\mu)\in \overline \fV_0$ such that $\mu$ is supported
in a set of lower Minkowski dimension and Hausdorff dimension $\alpha$.
This will be deduced from a Baire category argument, as follows.

\begin{theorem}\label{dense}
Suppose $\alpha<\gamma<d$ and $\varepsilon>0$. Let $\fV^{\gamma,\varepsilon}$ be the subset of $\overline {\fV_{0}}$ consisting of pairs $(K,\mu)$ for which there are cubes $Q_1,\cdots, Q_M$ with
\begin{equation}\label{covering-property}
K\subset\bigcup_{j=1}^M Q_j\ \ \text{ and }\ \ 
|Q_1|=\cdots=|Q_M|
<\varepsilon
M^{-d/\gamma}.
\end{equation}
Then
 $\fV^{\ga,\eps}$ is open and dense in $\overline \fV_0$.
\end{theorem}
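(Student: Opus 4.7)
Suppose $(K_0,\mu_0)\in\fV^{\gamma,\varepsilon}$, witnessed by cubes $Q_1,\dots,Q_M$ of common side length $\ell_0$ with $\ell_0^d<\varepsilon M^{-d/\gamma}$. Pick $\ell_1\in(\ell_0,(\varepsilon M^{-d/\gamma})^{1/d})$ and let $Q_j^*$ be concentric with $Q_j$ of side $\ell_1$. Every pair $(K_1,\mu_1)\in\overline{\fV_0}$ with $d_{\fW}((K_0,\mu_0),(K_1,\mu_1))<(\ell_1-\ell_0)/3$ satisfies $d_{\fK}(K_0,K_1)<(\ell_1-\ell_0)/3$, so $K_1$ lies in the $(\ell_1-\ell_0)/3$-neighborhood of $\bigcup Q_j\subset\bigcup Q_j^*$, witnessing $(K_1,\mu_1)\in\fV^{\gamma,\varepsilon}$.

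\textbf{Density: strategy.} Given $(K,\mu)\in\overline{\fV_0}$ and $\delta>0$, first choose $(K_0,g_0)\in\fV_0$ with $d_{\fW}((K,\mu),(K_0,g_0))<\delta/2$ (possible by definition of $\overline{\fV_0}$). We will produce $(K_1,g_1)\in\fV_0\cap\fV^{\gamma,\varepsilon}$ with $d_{\fW}((K_0,g_0),(K_1,g_1))<\delta/2$ by a Körner-style concentration-onto-sparse-cubes construction. Partition $\mathbb T^d$ into equal cubes $P_1,\dots,P_N$ of side $h$, so $N=h^{-d}$. Fix a small $t>0$ and inside each $P_j$ choose independently a uniformly random point $y_j$, letting $Q_j\subset P_j$ be the cube of side $\ell:=ht$ centered at $y_j$. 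With $\chi_0\in C_c^\infty$ a fixed nonnegative symmetric bump with $\int\chi_0=1$ supported in the unit cube, set $\chi_j(x)=\ell^{-d}\chi_0((x-y_j)/\ell)$ and
\[
g_1(x)=\sum_{j=1}^N c_j\chi_j(x),\qquad c_j=\int_{P_j}g_0\,d\lambda,\qquad K_1=\overline{\textstyle\bigcup_j Q_j}.
\]
The sparsity condition $\ell^d<\varepsilon N^{-d/\gamma}$ is equivalent to $t<\varepsilon^{1/d}h^{d/\gamma-1}$; since $\gamma<d$ we may fix $h$ small and then $t$ small to arrange this, so $(K_1,g_1)\in\fV^{\gamma,\varepsilon}$ once we verify membership in $\fV_0$.

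\textbf{Verification of membership and closeness.} Since $g_1\in C^\infty$, conditions \eqref{FTd-decay} and \eqref{bounded-convolution} are automatic for $g_1$; condition \eqref{bounded-convolution-low} requires checking that $g_1^{*n}(Q)\le\psi(|Q|)|Q|^{n\alpha/d}$ for cubes $Q$ and $1\le n<d/\alpha$. Expanding $g_1^{*n}=\sum_{j_1,\dots,j_n}c_{j_1}\cdots c_{j_n}\chi_{j_1}*\cdots*\chi_{j_n}$, each term is a bump of $L^\infty$-norm $\lesssim \ell^{-d}$ supported in $Q_{j_1}+\cdots+Q_{j_n}$, and mass bounds $|c_j|\lesssim\|g_0\|_\infty h^d$ reduce the estimate to a counting argument for overlapping Minkowski sums. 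Matching $\int g_1=\int g_0$ is immediate by construction, so $|\hat g_0(0)-\hat g_1(0)|=0$. The Hausdorff-distance term is bounded by $\sqrt{d}h$ since every $P_j$ contains some $Q_j$. For the $\fC^{(n\alpha-d)/2,\psi}$ contribution in \eqref{eq:Wmetric}, the $2^{-n}$ weight plus smoothness of $g_0^{*n}$ and $g_1^{*n}$ shows it is bounded by the sum of the first few differences, handled as for the Fourier term.

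\textbf{The main obstacle: the weighted Fourier sup.} We must show
$\sup_{r\ne 0}|r|^{\alpha/2}|\widehat{g_0}(r)-\widehat{g_1}(r)|/\psi(1/|r|)<\delta/4$.
Writing $\widehat{g_1}(r)=\widehat{\chi_0}(\ell r)\sum_j c_je^{-2\pi i r\cdot y_j}$, we split at a threshold $R_1=R_1(h)$. For $|r|\le R_1$, we use a deterministic Taylor argument: symmetry of $\chi_0$ around $y_j$ and the identity $c_j=\int_{P_j}g_0$ cancel the zeroth and first moments on each $P_j$, giving $|\widehat{g_0}(r)-\widehat{g_1}(r)|\lesssim h^2|r|^2\|g_0\|_{C^2}$ (using higher-order vanishing moments of $\chi_0$ if more cancellation is needed). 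For $|r|>R_1$ we use rapid decay of $\widehat{g_0}$ coming from smoothness of $g_0$, combined with a probabilistic estimate on $\sum_j c_je^{-2\pi i r\cdot y_j}$: the $y_j$ are independent, so Bernstein/Hoeffding-type inequalities yield $|\sum_j c_je^{-2\pi i r\cdot y_j}|\lesssim(\sum_j c_j^2\log|r|)^{1/2}$ with probability $1-|r|^{-N}$, uniformly in the countable index $r$. Borel--Cantelli and a union bound show that with positive probability, the relevant sup is controlled by $h^{\sigma}+t^{\sigma'}$ for some $\sigma,\sigma'>0$ dictated by $\alpha$ and the smoothness of $g_0$, which can be made smaller than $\delta/4$ by taking $h,t$ small. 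Selecting any realization $(y_j)$ for which this holds completes the density argument. The delicate part is balancing $R_1$, $h$, and $t$ so that both the deterministic low-frequency bound and the probabilistic high-frequency bound are uniformly small against the weight $|r|^{\alpha/2}/\psi(1/|r|)$; this is where the slow growth condition \eqref{psilimit} on $\psi$ is used.
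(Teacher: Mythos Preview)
Your openness argument is fine. The density argument, however, has two genuine gaps, both concerning the convolution-power conditions that define the space.

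\textbf{The regularity condition \eqref{bounded-convolution-low}.} You write that the bound $g_1^{*n}(Q)\le\psi(|Q|)|Q|^{n\alpha/d}$ for $1\le n<d/\alpha$ ``reduces to a counting argument for overlapping Minkowski sums''. This is exactly the hard part, and no such counting argument is supplied. For a small cube $Q$ one must control the number of $n$-tuples $(j_1,\dots,j_n)$ with $y_{j_1}+\cdots+y_{j_n}$ near $Q$; this count is a sum of highly dependent indicators (the same $y_j$ appears in many tuples), and uniform control over all cubes $Q$ and all realisations requires real probabilistic work. In the paper this is the content of Proposition~\ref{prop:point-masses} (proved in \S\ref{sec:point-masses} by an inductive martingale scheme), and it is not something one can wave away.

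\textbf{The H\"older closeness for $n\ge d/\alpha$.} You assert that the $C^{\rho_n,\psi}$ contribution to $d_{\fW}$ is ``handled as for the Fourier term''. It is not. Your $g_1$ has $k$-th derivatives of size $\ell^{-k}$, so a priori $g_1^{*n}$ has $C^{\rho_n}$ seminorm of order $\ell^{-\rho_n}$, which blows up as $\ell\to 0$. Showing that $g_1^{*n}$ is close to the fixed smooth function $g_0^{*n}$ in $C^{\rho_n,\psi}$ requires demonstrating enormous cancellation among the $N^n$ random bumps, and nothing in your sketch addresses this.

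The paper sidesteps both difficulties by a structurally different construction: rather than replacing $g_0$ by a random sum of bumps, one \emph{multiplies} $g_0$ by a $(2m+1)$-periodic building block $F_m$ (Lemma~\ref{lemma:multiplier-F}) built from the random point masses of Proposition~\ref{prop:point-masses}. The key advantage is K\"orner's factorisation (Lemma~\ref{lemma:periodprop}): if $P_m$ is the trigonometric truncation of $g_0$ to frequencies $|r|_\infty\le m$, then $(F_mP_m)^{*n}=(F_m)^{*n}(P_m)^{*n}$. Since the building block is engineered so that $(F_m)^{*n}\approx 1$ in $C^{\rho_n,\psi}$ for $n\ge d/\alpha$ and satisfies the right cube bounds for $n<d/\alpha$, both convolution conditions for $F_mg_0$ reduce to properties of $F_m$ alone, already established probabilistically. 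Your one-point-per-cell construction has no analogue of this factorisation, and without it the convolution-power estimates are not accessible by the arguments you indicate.
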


The Baire category theorem gives

\begin{corollary}
$\bigcap_{N=1}^\infty \fV^{\alpha+1/N,1/N}$ is a dense $G_\delta$ set in
$\overline \fV_0$.
\end{corollary}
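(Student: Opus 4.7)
The corollary is an immediate consequence of the Baire category theorem, and the plan is just to verify the three standard ingredients: completeness of the ambient space, openness and density of each factor, and countability of the intersection.

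First I would invoke Lemma~\ref{le:fW-metric}(iv), which states that $\overline{\fV_0}$, equipped with the restriction of the metric $d_{\fW}$ defined in \eqref{eq:Wmetric}, is a complete metric space. This gives the hypothesis required to apply Baire category in $\overline{\fV_0}$.

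Next, for each integer $N$ large enough that $\alpha + 1/N < d$ (which excludes only finitely many $N$), the parameters $\gamma = \alpha + 1/N$ and $\varepsilon = 1/N$ satisfy the hypotheses $\alpha < \gamma < d$ and $\varepsilon > 0$ of Theorem~\ref{dense}. Hence each such $\fV^{\alpha+1/N,\,1/N}$ is open and dense in $\overline{\fV_0}$. The finitely many small values of $N$ for which $\alpha+1/N \ge d$ can be safely discarded, since removing finitely many terms from a countable intersection preserves both the $G_\delta$ property and, by Baire, density in a complete metric space.

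Finally, the Baire category theorem applied to the complete metric space $\overline{\fV_0}$ and the countable family of open dense subsets $\{\fV^{\alpha+1/N,\,1/N}\}_{N \ge N_0}$ shows that the intersection is dense; being a countable intersection of open sets, it is a $G_\delta$ by definition. Since appending finitely many initial terms cannot destroy either property, we conclude that $\bigcap_{N=1}^\infty \fV^{\alpha+1/N,\,1/N}$ is a dense $G_\delta$ subset of $\overline{\fV_0}$. There is no real obstacle in this step; the substantive content of the construction is carried by Theorem~\ref{dense}, which the corollary cites as a black box.
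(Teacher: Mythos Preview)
Your proof is correct and follows exactly the paper's approach: the paper simply writes ``The Baire category theorem gives'' before stating the corollary, so your argument is just a more detailed spelling-out of that one line. One small cleanup: for the finitely many $N$ with $\alpha+1/N\ge d$, you do not actually need any separate argument, since the sets $\fV^{\alpha+1/N,1/N}$ are monotone decreasing in $N$ (both parameters $\gamma$ and $\varepsilon$ shrink, and $\fV^{\gamma,\varepsilon}$ is increasing in each), so $\bigcap_{N\ge 1}\fV^{\alpha+1/N,1/N}=\bigcap_{N\ge N_0}\fV^{\alpha+1/N,1/N}$ outright.
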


Let $\underline{\dim}_{\rm M}(K)$, $\dim_{\rm H}(K)$
 denote the lower
Minkowski dimension and Hausdorff dimension, respectively.
Then
$\dim_{\rm H}(K)\le
\underline{\dim}_{\rm M}(K)$. If   $(K,\mu)\in\bigcap_{N=1}^\infty \mathcal \fV^{\alpha+1/N,1/N}$, then
$\underline{\dim}_{\rm M}(K)\le\alpha$  and hence also
${\dim}_{\rm H}(K)\le\alpha$.
On the other hand, $\eqref{FTd-decay}$ implies $\dim_{\rm H}(K)\ge\alpha$ (see e.g. \cite[Corollary~8.7]{wolff}). Thus we obtain

\begin{corollary}\label{quasi-all}
The set of $(K,\mu)\in \overline \fV_0$ satisfying
$$\underline{\dim}_{\rm M}(K)=\dim_{\rm H}(K)=\alpha$$
is of second category in $\overline{\fV_0}$.
\end{corollary}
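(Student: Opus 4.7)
The plan is a direct application of the Baire category theorem once the preceding corollary is in hand. Since $\overline{\fV_0}$ is a complete metric space by Lemma~\ref{le:fW-metric}(iv), every dense $G_\delta$ subset is of second category. The corollary just before the statement identifies $\cG := \bigcap_{N=1}^\infty \fV^{\alpha+1/N,\,1/N}$ as such a dense $G_\delta$, so it suffices to verify that each pair $(K,\mu)\in\cG$ (after excluding a meager ``$\mu=0$'' subset) satisfies $\underline{\dim}_{\rm M}(K)=\dim_{\rm H}(K)=\alpha$.

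For the upper bound, I would read the covering property \eqref{covering-property} directly off membership in $\fV^{\alpha+1/N,1/N}$: for each $N$ there are equal-side-length cubes $Q_1,\dots,Q_M$ covering $K$ with common side length $s_M < N^{-1}M^{-1/(\alpha+1/N)}$. Rearranging gives $M\cdot s_M^{\alpha+1/N} < N^{-(\alpha+1/N)}$, i.e.\ the covering number $N(K,s_M)$ is at most $(Ns_M)^{-(\alpha+1/N)}$; by the definition of lower Minkowski dimension this yields $\underline{\dim}_{\rm M}(K)\le\alpha+1/N$. Sending $N\to\infty$ gives $\underline{\dim}_{\rm M}(K)\le\alpha$, and since $\dim_{\rm H}\le\underline{\dim}_{\rm M}$ always, also $\dim_{\rm H}(K)\le\alpha$.

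For the matching lower bound, every element of $\overline{\fV_0}\subset\fW$ satisfies the Fourier decay condition \eqref{FTd-decay}. Because $\psi(t)$ decays subpolynomially by \eqref{psilimit}, \eqref{FTd-decay} implies $|\widehat\mu(r)|=O(|r|^{-\alpha/2+\delta})$ for every $\delta>0$. Provided $\mu\not\equiv 0$, the standard Frostman-type result relating Fourier decay to Hausdorff dimension (\cite[Cor.~8.7]{wolff}) then gives $\dim_{\rm H}(\supp\mu)\ge\alpha-2\delta$ for every $\delta>0$, and since $\supp\mu\subset K$ we conclude $\dim_{\rm H}(K)\ge\alpha$.

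The only subtlety I foresee is to exclude the degenerate case $\mu\equiv 0$, for which the Fourier argument is vacuous. This is handled by noting that $\widehat\mu(0)=\mu(\bbT^d)$ appears as an explicit coordinate in the metric \eqref{eq:Wmetric}, so the set $\cU_+:=\{(K,\mu)\in\overline{\fV_0}:\widehat\mu(0)>0\}$ is open; it is also dense, since Lemma~\ref{le:fW-metric}(iv) allows one to perturb any pair by adding a small nonnegative smooth density. Hence $\cG\cap\cU_+$ is again a dense $G_\delta$, and on it both dimensional conclusions hold simultaneously. All substantive work is hidden in Theorem~\ref{dense}, so no new obstacle is expected at this stage; the corollary is a routine Baire-category harvest of the preceding density statement combined with the built-in Fourier decay and covering-number constraints defining $\fW$ and $\fV^{\gamma,\eps}$.
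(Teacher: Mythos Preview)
Your argument is correct and follows essentially the same route as the paper: the dense $G_\delta$ set $\bigcap_N \fV^{\alpha+1/N,1/N}$ furnishes the upper bound $\underline{\dim}_{\rm M}(K)\le\alpha$ via the covering condition \eqref{covering-property}, and the built-in Fourier decay \eqref{FTd-decay} supplies the lower bound $\dim_{\rm H}(K)\ge\alpha$ through the energy/Frostman criterion. (A small arithmetic slip: since $|Q_j|$ denotes Lebesgue measure, the side length satisfies $s_M < N^{-1/d}M^{-1/(\alpha+1/N)}$ rather than $N^{-1}M^{-1/(\alpha+1/N)}$; this does not affect the conclusion.)

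Your treatment of the degenerate case $\mu\equiv 0$ is a genuine refinement: the paper simply invokes \eqref{FTd-decay} without comment, tacitly assuming $\mu\neq 0$, whereas you correctly observe that the Fourier lower bound is vacuous for the zero measure and excise this case by intersecting with the open dense set $\cU_+=\{\widehat\mu(0)>0\}$. Your justification that $\cU_+$ is dense is slightly informal---Lemma~\ref{le:fW-metric}(iv) does not literally say one can ``add a small smooth density''---but the underlying fact is easy: given $(K,0)\in\fV_0$, enlarge $K$ to $K\cup\overline{Q}$ for a small cube $Q$ centered at a point of $K$ and place a tiny nonnegative bump in $Q$; for sufficiently small amplitude this pair lies in $\fV_0$ (condition \eqref{bounded-convolution-low} is trivially satisfied) and is $d_\fW$-close to $(K,0)$.
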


Concerning the proof of Theorem
\ref{dense}, it is easy to see that
the sets $\fV_{\ga,\eps}$  are open subsets of $\overline{\fV_0}$.  The remainder of this section is devoted to proving that they are dense.

\subsection{\it Averages of point masses}
For large $N$ let  $\Gamma_N$ be
the finite subgroup
of $\bbT$  of order $N$, consisting
of $\{k/N: k=0,1,\dots, N-1\}$. Let $\Gamma_N^d$ the $d$-fold product, a subgroup of $\bbT^d$.

The following result yields measures on $\bbT^d$  which are sums of point masses supported on points in $\Gamma_N^d$ and  satisfy properties analogous to  \eqref{FTd-decay}, \eqref{bounded-convolution} and \eqref{covering-property}.

\begin{proposition}\label{prop:point-masses}
Given $0<\beta<d$ and an integer $\mathfrak{n}\ge 2$,
there exist $N_0(\beta,\mathfrak{n})\ge 1$, $C_1=C_1(d)$, $C_2=C_2(\beta,d)$, $C_3=C_3(\beta, d, \fn)$
 such that for all $N\ge N_0(\beta,\mathfrak{n})$ with $\gcd(\mathfrak{n}!,N)=1$, $P:=\lfloor N^\beta\rfloor$
 there is a choice of $x_1,\dots, x_P$ with  $x_j\in \Gamma_N^d$, such
that the following properties hold for  the measure
$$\mu=\frac{1}{P}\sum_{j=1}^P \delta_{x_j}.$$
\begin{subequations}
(i) For all ${r\in \Gamma_N^d\setminus \{0\}}$,
\begin{equation}\label{eq:fourier-decay-final}
|\widehat\mu(Nr)|\le C_1N^{-\beta/2} (\log N)^{1/2}
\end{equation}

(ii) For $1\le \ell\le d/\beta$
and for all cubes $Q$ with
$|Q|\le N^{-\ell\beta}$,
\begin{equation}\label{eq:bdd-multiplicity}
 \mu^{*\ell}(Q)\le C_2 N^{-\ell \beta} \log N\,.
\end{equation}

(iii) For  $d/\beta\le \ell\le \mathfrak{n}$,
 \begin{equation}\label{eq:bdd-convol}
 \max_{u\in \Gamma_N^d}
\big|\mu^{*\ell}(\{u\})-N^{-d}\big|
\le C_3\frac{N^{-d}(\log N)^{\frac{\ell+1}{2}}}{N^{(\ell\beta-d)/2}}
\end{equation}
\end{subequations}

\end{proposition}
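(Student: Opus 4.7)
The approach is probabilistic: choose $x_1,\dots,x_P$ i.i.d.\ uniformly at random on $\Gamma_N^d$, with $P=\lfloor N^\beta\rfloor$, and verify each of (i)--(iii) with probability $1-o(1)$, so that a simultaneous realization exists once $N\ge N_0(\beta,\fn)$. The hypothesis $\gcd(\fn!,N)=1$ enters throughout to guarantee that integer linear combinations $\sum c_s x_s$ with $0<|c_s|\le\fn$ are uniformly distributed on $\Gamma_N^d$, which is what keeps the probability computations ``generic.''

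For (i), decompose $\widehat\mu(Nr)=P^{-1}\sum_{j=1}^P e^{-2\pi i Nr\cdot x_j}$ as a normalized sum of $P$ i.i.d.\ bounded mean-zero complex random variables; Hoeffding gives $\bbP(|\widehat\mu(Nr)|>AN^{-\beta/2}(\log N)^{1/2})\lesssim N^{-cA^2}$ (using $P\asymp N^\beta$), and a union bound over the $\le N^d$ nonzero frequencies $r$ closes this step once $A=C_1$ is large. For (ii), write $\mu^{*\ell}(Q)=P^{-\ell}N_Q$ with $N_Q=\#\{\vec j\in[P]^\ell:\sum_k x_{j_k}\in Q\bmod 1\}$; the number of points of $\Gamma_N^d$ inside $Q$ is $M\le C\max(1,N^d|Q|)\le CN^{d-\ell\beta}$, so coprimality gives $E[N_Q]\lesssim P^\ell N^{-\ell\beta}$, and a Bernstein estimate for the degree-$\ell$ $U$-statistic $N_Q$ combined with a union bound over the polynomially many relevant dyadic cubes delivers (ii).

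For (iii), the key structural observation is $\mu_0*(\mu-\mu_0)=0$ on the finite group $\Gamma_N^d$, where $\mu_0$ denotes the uniform probability measure on $\Gamma_N^d$; hence $\mu^{*\ell}-\mu_0=(\mu-\mu_0)^{*\ell}$, and $X_u:=\mu^{*\ell}(\{u\})-N^{-d}$ is centered. Rather than bound $X_u$ via the Fourier representation $X_u=N^{-d}\sum_{m\ne 0}\widehat\mu(m)^\ell e^{2\pi i m\cdot u/N}$ using (i) alone (which would lose a factor of $N^{d/2}$), I compute $\mathrm{Var}(N_u)$ directly for $N_u=\#\{\vec j\in[P]^\ell:\sum_k x_{j_k}=u\}$. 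In the expansion
\[
\mathrm{Var}(N_u)=\sum_{\vec j,\vec k\in[P]^\ell}\Big(\bbP\big(\textstyle\sum_a x_{j_a}=\sum_b x_{k_b}=u\big)-N^{-2d}\Big),
\]
independence together with coprimality makes the summand vanish unless every distinct index in $\vec j\cup\vec k$ occurs at least twice, so only $O_\ell(P^\ell)$ pairs contribute, each with $O(N^{-d})$, yielding $\mathrm{Var}(N_u)\lesssim_\ell P^\ell N^{-d}$. A Bernstein/moment concentration for degree-$\ell$ $U$-statistics then gives $|X_u|\lesssim N^{-(d+\ell\beta)/2}(\log N)^{(\ell+1)/2}$ with probability $\ge 1-N^{-d-1}$, which survives the union bound over the $N^d$ points $u\in\Gamma_N^d$.

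The sharp variance bound in (iii) is the main obstacle, since plugging (i) into the Fourier expansion of $X_u$ only yields $|X_u|\lesssim N^{-\ell\beta/2}(\log N)^{\ell/2}$, short by $N^{d/2}$ from the target. Capturing the extra oscillatory cancellation across the $N^d-1$ Fourier terms requires the combinatorial analysis of which tuple pairs $(\vec j,\vec k)$ contribute to $\mathrm{Var}(N_u)$, and the coprimality hypothesis $\gcd(\fn!,N)=1$ is essential there to rule out degenerate contributions in which small integer multiplicities would be zero-divisors modulo $N$.
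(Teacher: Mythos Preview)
Your treatment of (i) matches the paper's (Bernstein/Hoeffding plus union bound over the $N^d$ frequencies). For (ii) and (iii), however, your route diverges from the paper's and has a real gap.

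The paper does \emph{not} treat $N_Q$ or $N_u$ as a black-box degree-$\ell$ $U$-statistic. Instead it telescopes
\[
\sigma_m^{*\ell}=\sum_{j=1}^m \Delta_{j,\ell},\qquad \Delta_{j,\ell}=\sigma_j^{*\ell}-\sigma_{j-1}^{*\ell}
=\sum_{k=0}^{\ell-1}\binom{\ell}{k}\delta_{(\ell-k)x_j}*\sigma_{j-1}^{*k},
\]
and proceeds by induction on $\ell$: on a high-probability event the lower-order convolutions $\sigma_{j-1}^{*k}$ already obey the right pointwise/cube bounds, so each $\Delta_{j,\ell}(Q)$ (resp.\ $\Delta_{j,\ell}(\{u\})$) is bounded by a constant. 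For (ii) the number of $j$ with $\Delta_{j,\ell}(Q)\neq 0$ is then controlled by an elementary binomial tail (K\"orner's lemma), yielding the $\log N$ bound. For (iii) the centered increments form a bounded martingale difference sequence and Azuma--Hoeffding gives the exponential tail needed to beat the $N^d$-fold union bound. The argument is self-contained and uses nothing beyond Azuma.

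Your variance computation for (iii) is correct (and the identity $\mu^{*\ell}-\mu_0=(\mu-\mu_0)^{*\ell}$, equivalently that the kernel $\mathbb 1[y_1+\cdots+y_\ell=u]-N^{-d}$ is completely degenerate, is a clean observation the paper does not make explicit). But the sentence ``a Bernstein/moment concentration for degree-$\ell$ $U$-statistics then gives \ldots'' is exactly where the work lies, and you have not done it. Chebyshev from $\mathrm{Var}(N_u)\lesssim P^\ell N^{-d}$ gives only polynomial failure probability, far too weak for the union bound over $N^d$ points. To get $\mathbb P(|X_u|>t)\le N^{-d-1}$ you need an exponential tail, which for a degree-$\ell$ canonical chaos is of the form $\exp(-c(t/\sigma)^{2/\ell})$; proving this requires either a higher-moment bound $\|N_u-\mathbb E N_u\|_{2k}\lesssim k^{\ell/2}\|N_u-\mathbb E N_u\|_2$ (a combinatorial argument substantially more involved than the variance case, plus separate treatment of the diagonal tuples with repeated indices), or an appeal to decoupling/hypercontractivity results that you have neither stated nor verified. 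The same issue afflicts (ii), with the additional wrinkle that a generic degree-$\ell$ chaos tail would give $N_Q\lesssim (\log N)^{\ell/2}$ rather than the stated $\log N$; the paper's inductive argument gets the sharper $\log N$ (indeed $\log N/\log\log N$) precisely because it reduces to a sum of $O(1)$-bounded increments rather than a genuine order-$\ell$ chaos.

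In short: the approach is plausible and could be completed, but the concentration step you wave at is the entire difficulty, and the paper's inductive martingale structure is what replaces it.
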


While this result is not optimal (in particular with respect to the powers of the logarithm),
it is all we need for the proof of Theorem \ref{dense}.
See  \S \ref{sec:point-masses}.

\subsection{\it Transference}\label{subsec:transference-principle}
For $N\ge 1$, we will write
$$\sqcap_N=N^d\mathds{1}_{[-1/2,1/2)^d}(Nt)dt$$
and
$$\tau_N=\frac{1}{N^d}\sum_{j\in \Gamma_N^d}\delta_{j/N}.$$ Recall that  $\la$ is  the uniform probability measure (i.e. normalized  Lebesgue measure) on $\bbT^d$.

We start with some simple observations.
\begin{lemma}\label{lemma:obs}
The following
holds true for $N\ge 1$:

(i) $\sqcap_N^{*\ell}*\tau_N=\la$ for $\ell=1,2,\cdots.$

(ii) $\widehat{\tau_N}(r)=1$ for $r\in (N\mathbb Z )^d$, and $\widehat{\tau_N}(r)=0$ otherwise.

(iii) $\widehat{\sqcap_N}(r)=0$ for $r\in (N\mathbb Z )^d, r\neq 0$.

\end{lemma}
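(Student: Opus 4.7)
The plan is to verify the three assertions by direct computation, with the Fourier‑theoretic items (ii) and (iii) doing the real work and (i) following either geometrically or by Fourier inversion.

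For (ii), I would exploit the tensor product structure of $\tau_N$. Writing $\tau_N = \bigotimes_{k=1}^{d} \bigl(\tfrac{1}{N}\sum_{j=0}^{N-1}\delta_{j/N}\bigr)$ on $\bbT^d$, the character $r\mapsto e^{-2\pi i r\cdot t}$ also factors, so
\[
\widehat{\tau_N}(r)=\prod_{k=1}^{d}\Bigl(\tfrac{1}{N}\sum_{j=0}^{N-1}e^{-2\pi i r_k j/N}\Bigr).
\]
The inner geometric sum equals $N$ when $N\mid r_k$ and vanishes otherwise, which gives the indicator of $(N\bbZ)^d$ as claimed.

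For (iii), I would change variables $u=Nt$ in the integral defining $\widehat{\sqcap_N}(r)$. Since the density of $\sqcap_N$ is $N^d$ on the fundamental cube $[-1/(2N),1/(2N))^d$, the Jacobian cancels the $N^d$ factor and one obtains
\[
\widehat{\sqcap_N}(r)=\int_{[-1/2,1/2)^d}e^{-2\pi i (r/N)\cdot u}\,du
=\prod_{k=1}^{d}\int_{-1/2}^{1/2}e^{-2\pi i (r_k/N) u_k}\,du_k.
\]
When $r\in(N\bbZ)^d\setminus\{0\}$, at least one $r_k/N$ is a nonzero integer, and the corresponding one‑dimensional integral is $0$, so the whole product vanishes.

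For (i), I would first check that $\sqcap_N$ is a probability measure on $\bbT^d$ (its total mass equals $N^d\cdot N^{-d}=1$), so $\sqcap_N*\lambda=\lambda$ and it suffices to prove the case $\ell=1$. One can argue geometrically: the translates of $[-1/(2N),1/(2N))^d$ by the points of $\Gamma_N^d$ partition $\bbT^d$, so summing $N^d$ copies of the density with weight $N^{-d}$ gives the constant $1$. Alternatively, items (ii) and (iii) already force $\widehat{\sqcap_N}\cdot\widehat{\tau_N}=\mathds 1_{\{0\}}=\widehat{\lambda}$ on $\bbZ^d$, which by Fourier uniqueness for finite Borel measures on $\bbT^d$ gives $\sqcap_N*\tau_N=\lambda$; then induction on $\ell$ combined with $\sqcap_N*\lambda=\lambda$ completes the proof.

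Each step is a one‑line computation; I do not expect any genuine obstacle. The only care needed is in tracking the normalizations ($N^d$ versus $N^{-d}$) and in choosing the half‑open cube $[-1/2,1/2)^d$ so that the tiling by $\Gamma_N^d$‑translates is clean and the partition identity in (i) holds without overlap.
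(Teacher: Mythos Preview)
Your proposal is correct and follows essentially the same approach as the paper: the geometric sum for (ii), the product formula for $\widehat{\sqcap_N}$ for (iii), and for (i) the paper likewise notes that it follows either by direct computation of the convolution or as a consequence of (ii) and (iii). The only cosmetic difference is that the paper records the full sinc formula $\widehat{\sqcap_N}(r)=\prod_k \frac{\sin(\pi r_k/N)}{\pi r_k/N}$, whereas you short-circuit this by observing directly that the one-dimensional integral vanishes at nonzero integer frequencies.
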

\begin{proof}  (i) follows by direct computation of the convolution
(it is also a
consequence of (ii) and (iii)).
For (ii) notice  that if
$r\notin (N\mathbb Z )^d$,
$$\widehat{\tau_N}(r)=\frac{1}{N^d}\sum_{j\in [N]^d}e^{-2\pi ir\cdot j/N}=\prod_{k=1}^d \frac{1}{N}\frac{e^{-2\pi ir_k}-1}{e^{-2\pi ir_k/N}-1}=0.$$
Otherwise $\widehat{\tau_N}(r)=1$.
For (iii) just notice that
$\widehat{\sqcap_N}(r)=\prod_{k=1}^d\frac{\sin(\pi r_k/N)}{\pi r_k/N}$.
\end{proof}

In what follows we let $\ups$ be a nonnegative smooth function supported in
$(-1/2,1/2)^d$ such that $\int \ups(t) dt=1$, and let
$\ups_N= N^d\ups(N\cdot).$
Thus $\ups_N$ generate a standard smooth approximation of the identity.
We now convolve the point masses obtained in Proposition
 \ref{prop:point-masses} with $\sqcap_N$ and  the mollifier $\ups_N$.

\begin{lemma}\label{lemma:convol-2}
Let $\mu$ be as in Proposition \ref{prop:point-masses} and let $f= \ups_N*\sqcap_N*\mu$. Then $f$
is a smooth function satisfying the following properties.

(i)  For $l=0,1,\dots$,
\begin{equation*}
\|\nabla^l f\|_\infty\le C(l) N^{d+l}.\
\end{equation*}
 There are cubes  $Q_j,j=1,\cdots,\lfloor N^{\beta}\rfloor
$  with side length $2/N$ such that
\begin{equation*}
\emph{supp}(f)\subset \bigcup_{j=1} ^{\lfloor N^{\beta}\rfloor} Q_j\,.
\end{equation*}
\begin{subequations}

(ii) For  $r\in\mathbb Z^d\backslash\{0\}$, $\La\ge 0$
\begin{equation}\label{eq:fourier-decay-f}
|\widehat f(r)|\le C(\log N)^{-1/2} N^{-\beta/2}\min\Big(\frac{C_0 (\La)N^\La}{|r|^\La},1\Big).\
\end{equation}

(iii) For all cubes $Q$
\begin{equation}\label{eq:regularity-f}
\int_Q f^{*n}(t)dt \le 2^d |Q|^{n\beta/d} \log N, \quad 1\le n\le d/\beta\,.
\end{equation}

(iv)
For $l=0,1,2,\dots$
\begin{equation}\label{eq:Lip-1-f}
\|\nabla^l (f^{*n}-1)\|_\infty\le C(l)C(\beta,\mathfrak{n})\frac{(\log N)^\nh}{N^{(n\beta-d)/2}}N^l, \quad d/\beta\le n\le \fn.
\end{equation}

\end{subequations}
\end{lemma}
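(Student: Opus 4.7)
The plan is to push the combinatorial/probabilistic work onto Proposition \ref{prop:point-masses} and to exploit the smoothing effect of $\ups_N*\sqcap_N$ to transfer the discrete estimates on $\Gamma_N^d$ into continuous estimates on $\bbT^d$. Part (i) is routine: $f$ inherits smoothness from $\ups_N$, and writing $\nabla^l f=(\nabla^l \ups_N)*\sqcap_N*\mu$ reduces everything to $\|\nabla^l \ups_N\|_1\lesssim N^l$ together with $\|\sqcap_N*\mu\|_\infty\le N^d$ (since $\sqcap_N*\mu$ is a weighted superposition of $P$ indicator functions of disjoint $N^{-d}$-cubes with total mass one). The support statement is immediate from $\supp(\ups_N*\sqcap_N)\subset (-1/N,1/N)^d$ and $|\supp\mu|\le P$. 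For part (ii) one factors $\widehat f(r)=\widehat\ups(r/N)\widehat{\sqcap_N}(r)\widehat\mu(r)$: Lemma \ref{lemma:obs}(iii) zeros the second factor on $(N\bbZ)^d\setminus\{0\}$, while for $r\notin(N\bbZ)^d$ the $N$-periodicity of $\widehat\mu$ on $\bbZ^d$ (since $\supp\mu\subset\Gamma_N^d$) together with Proposition \ref{prop:point-masses}(i) yields $|\widehat\mu(r)|\lesssim N^{-\beta/2}(\log N)^{1/2}$, and the Schwartz decay $|\widehat\ups(r/N)|\le C_\Lambda(1+|r|/N)^{-\Lambda}$ supplies the $\min$ factor.

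For part (iii), set $K=\ups_N^{*n}*\sqcap_N^{*n}$ and swap the order of integration:
\begin{equation*}
\int_Q f^{*n}(t)\,dt=\int(\mathds{1}_Q*K)(u)\,d\mu^{*n}(u)\le \|\mathds{1}_Q*K\|_\infty\,\mu^{*n}(Q^*), \qquad Q^*:=Q+\supp K.
\end{equation*}
Use the two competing bounds $\|\mathds{1}_Q*K\|_\infty\le\|K\|_1=1$ and $\|\mathds{1}_Q*K\|_\infty\le |Q|\,\|K\|_\infty\lesssim |Q|N^d$, and cover $Q^*\subset Q+[-n/N,n/N]^d$ by translates of cubes of volume $N^{-n\beta}$ to invoke Proposition \ref{prop:point-masses}(ii), which gives $\mu^{*n}(Q^*)\lesssim\max(|Q^*|,N^{-n\beta})\log N$. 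A short case analysis on whether $|Q|$ lies above or below the critical scales $(n/N)^d$ and $N^{-n\beta}$ combines these to produce $\int_Q f^{*n}\lesssim|Q|^{n\beta/d}\log N$, with the dimensional and dyadic overhead absorbed into the constant $2^d$.

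The heart of the argument is (iv). The cornerstone is the identity
\begin{equation*}
K*\tau_N=\ups_N^{*n}*(\sqcap_N^{*n}*\tau_N)=\ups_N^{*n}*1=1,
\end{equation*}
immediate from Lemma \ref{lemma:obs}(i); equivalently, $\sum_{u\in\Gamma_N^d}K(x-u)=N^d$ for every $x$. This isolates the deviation cleanly:
\begin{equation*}
f^{*n}-1=K*(\mu^{*n}-\tau_N)=\sum_{u\in\Gamma_N^d} a_u\,K(\cdot-u), \qquad a_u:=\mu^{*n}(\{u\})-N^{-d}.
\end{equation*}
Differentiating term by term,
\begin{equation*}
\|\nabla^l(f^{*n}-1)\|_\infty\le \max_u|a_u|\cdot\sup_x\sum_u|\nabla^l K(x-u)|.
\end{equation*}
Proposition \ref{prop:point-masses}(iii) gives $\max_u|a_u|\le C_3 N^{-d}(\log N)^{(n+1)/2}/N^{(n\beta-d)/2}$, and by localization $\supp(\nabla^l K)\subset[-n/N,n/N]^d$ meets $\Gamma_N^d$ in at most $(2n+1)^d$ points, each contributing at most $\|\nabla^l K\|_\infty\le\|\nabla^l\ups_N\|_\infty\|\ups_N\|_1^{n-1}\le C(l)N^{d+l}$. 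Multiplying these three factors yields precisely the stated bound.

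The main obstacle is the coordination required in (iv). A direct Fourier–Plancherel attack using only $|\widehat\mu(r)|\lesssim N^{-\beta/2}(\log N)^{1/2}$ raised to the $n$th power loses an $N^{d/2}$-factor, so one must instead combine the sharp pointwise atom-by-atom comparison from Proposition \ref{prop:point-masses}(iii) with the exact identity $\sqcap_N^{*n}*\tau_N=1$ to cancel the bulk mass, and then exploit the small-scale localization of $\nabla^l K$ to prevent $\sum_u|\nabla^l K(x-u)|$ from picking up the naive $N^d$-factor that $K*\tau_N=1$ would give for $K$ itself. Part (iii) is also delicate because the case $|Q|\le N^{-d}$ forces one to interpolate between the Proposition \ref{prop:point-masses}(ii) mass bound and the trivial $L^\infty$ control of $K$.
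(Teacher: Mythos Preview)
Your proof is correct and follows essentially the same approach as the paper. The only cosmetic difference is that the paper first establishes the corresponding estimates for the piecewise-constant function $g=\sqcap_N*\mu$ (so that, e.g., $f^{*n}-1=\ups_N^{*n}*(g^{*n}-1)$ and the derivatives are supplied by $\|\nabla^l\ups_N^{*n}\|_1\lesssim N^l$), whereas you work directly with the full kernel $K=\ups_N^{*n}*\sqcap_N^{*n}$; both routes rest on the same three ingredients you identified --- the identity $\sqcap_N^{*n}*\tau_N=1$ from Lemma~\ref{lemma:obs}(i), the pointwise atom bound from Proposition~\ref{prop:point-masses}(iii), and the $O(N^{-1})$-localization of the kernel --- and the case split in (iii) on $|Q|$ relative to $N^{-d}$ and $N^{-n\beta}$ is identical.
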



\begin{proof} The assertion about the support follows immediately from the definition. Let $$g(t)=\sqcap_N*\mu(t)=\int_{\mathbb T^d}\sqcap_N(t-s)d\mu(s).$$ The mollifiers satisfy $\ups_N(r)\le N^d \max\{1, C(\Lambda) (N/|r|)^{\Lambda}\}$ for any $\Lambda\ge 0$. We thus observe that the estimates for $f$ are implied by
the following estimates for $g$.
\begin{subequations}
\begin{equation}\label{eq:fourier-decay-g}
\sup_{r\in \mathbb Z^d\backslash\{0\}}|\hat g(r)|\le C(\log N)^{1/2} N^{-\beta/2}\,,
\end{equation}
\begin{equation}\label{eq:regularity-g}
\int_Q g^{*n}(t)dt \le 2^d |Q|^{n\beta/d} \log N,\  n\le d/\beta,
\end{equation}
for all cubes $Q$,
\begin{equation}\label{eq:bdd-convol-g}
\sup_{t\in\mathbb T^d}|g^{*n}(t)-1|\le C(\beta,\mathfrak{n})\frac{(\log N)^{\nh}}{N^{(n\beta-d)/2}}, \
d/\beta\le n\le \fn,
\end{equation}
and
\begin{equation}\label{eq:bdd-multiplicity-g}
\sup_{t\in\mathbb T^d}|g(t)|\le N^d\,.
\end{equation}
\end{subequations}

To show \eqref{eq:fourier-decay-g}, notice that
$\widehat g(r)=\widehat{\sqcap_N}(r)\widehat{\mu}(r).$
If $r\in (N\mathbb Z)^d$, then  $\widehat g(r)=0$, by Lemma \ref{lemma:obs}, (ii).
Otherwise use the trivial bound $|\widehat{\sqcap_N}(r)|\le 1$ and \eqref{eq:fourier-decay-final}, together with the observation that $\widehat\mu$ is $N$-periodic.

To show \eqref{eq:regularity-g}, we consider separately the three cases
$|Q|\le N^{-d}$,
$N^{-d}\le |Q|\le N^{-n\beta}$, and $|Q|\ge N^{-n\beta}$.

{\it Case 1: $|Q|\le N^{-d}$.}
Notice that, as in the proof of \eqref{eq:bdd-multiplicity-g}, we have
$$\sqcap_N*\mu^{*n}(t)=N^d \mu^{*n}(\{u\})\le N^d M(\beta)\frac{\log N}{N^{n\beta}}.$$
Thus
\begin{align*}
&\sqcap_N*\mu^{*n}(Q)
\le |Q| N^d M(\beta)\frac{\log N}{N^{n\beta}}\\
&= |Q|^{n\beta/d} (|Q|N^d)^{1-n\beta/d} M(\beta){\log N}
\le M(\beta)|Q|^{n\beta/d}{\log N}
\end{align*}
by our assumption on $|Q|$. \\

{\it Case 2: $N^{-d}\le |Q|\le {N^{-n\beta}}.$}
In this case, by \eqref{eq:bdd-multiplicity}
\begin{align*}
\int_Q g^{*n}(t)dt&= \int_Q\sqcap_N^{n}*\mu^{*n}(t)dt
\le \max_{Q: |Q|=
N^{-n\beta}} \mu^{*n}(Q)\\
&\le M(\beta)N^{-n\beta}\log N\le M(\beta)|Q|^{n\beta/d}{\log N}.
\end{align*}

{\it Case 3:  $|Q|\ge{N^{-n\beta}}.$}
In this case we can split $Q$ into no more than $2^dN^{n\beta} |Q|$
cubes of size at most $N^{-n\beta}$. Applying  \eqref{eq:bdd-multiplicity} to each cube we may bound $\mu^{*n}(Q)$ by
$$
 (2^dN^{n\beta} |Q|) M(\beta)\frac{\log N}{N^{n\beta}}
=2^d M(\beta) |Q| \log N
\le 2^d M(\beta) |Q|^{n\beta} \log N.
$$
Since $g=\sqcap_N^{n}*\mu^{*n}$, $\eqref{eq:regularity-g}$ follows also in Case 3.

To show \eqref{eq:bdd-convol-g}, notice that by Lemma \ref{lemma:obs} (i)  and \eqref{eq:bdd-convol},
$$g^{*n}
=\sqcap_N^{*n}*\tau_N+\sqcap_N^{*n}*(\mu^{*n}-\tau_N)=\la+\sqcap_N^{*n}*(\mu^{*n}-\tau_N)
$$
and
$$|\mu^{*n}-\tau_N|\le C(\beta,\mathfrak{n})\frac{(\log N)^{\nh}}{N^{(n\beta-d)/2}}\tau_N.$$
Now $g^{*n}$ is continuous and we get
$$|g^{*n}-1|
\le  C(\beta,\mathfrak{n})\frac{(\log N)^{\nh}}{N^{(n\beta-d)/2}}
$$
and thus
\eqref{eq:bdd-convol-g}.

To show \eqref{eq:bdd-multiplicity-g}, notice that for any $t\in\mathbb T$,
$g(t)=N^d\mu(\{u\})$
where $u$ is the unique point in $\Gamma_N^d$ contained in the cube
$(t-1/(2N),t+1/(2N)]^d$. Now \eqref{eq:bdd-multiplicity-g} follows from \eqref{eq:bdd-multiplicity} with $n=1$ and $Q$ containing $u$.
\end{proof}

\noi{\bf Definition.} Let $f$ be a smooth function on $\bbT^d$ and let $p\in \bbN$.  We let   the {\it $p$-periodization} $\text{Per}_p f$  be the unique smooth function on $\bbT^d$
which is $1/p$-periodic in each of the $d$ variables and satisfies
$$\text {Per}_pf(t)= f(pt)\quad  \text{  for $0\le t_i<p^{-1}$, $i=1,\dots, d$.}$$

The following lemma is analogous to a crucial observation about periodized function in \cite{koerner2008}.

\begin{lemma}
\label{lemma:periodprop} Let $p\in \bbN$.

(i) Let $f\in C^\infty(\bbT^d)$. Then
$$
\widehat {\text{\rm Per}_pf}(kp )=\widehat f(k), \text{ $k\in \bbZ^d$,}
$$
and
$\widehat {\text{\rm Per}_pf}(r)=0$ if $r\in \bbZ^d$ is not of this form.

(ii) Let  $\cR=[a_{1}, a_{1}+p) \times \cdots\times [a_{d}, a_{d}+p)$, for some
$a\in \bbR^d$ and, for $\nu=1,\dots n$ let
$P_\nu$ be a trigonometric polynomial with frequencies in $\cR$, i.e.
$P_\nu$ is a linear combination of the functions $x\mapsto  \exp(2\pi i \inn{k}{x})$ with $k\in \cR\cap \bbZ^d$.
Let $f_1, \dots, f_n$ be  smooth functions on $\bbT^d$ and let
$G_\nu=\text{\rm Per}_p f_\nu$. Then
$$(G_1P_1)*\dots *(G_nP_n)= (G_1*\cdots*G_n) (P_1*\cdots*P_n)\,.$$
\end{lemma}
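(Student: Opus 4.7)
\medskip

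\noindent\textbf{Proof proposal.} The plan is to verify both assertions by straightforward Fourier computations, with the whole argument hinging on the fact that the integer points of $\cR$ form a complete set of coset representatives for $\bbZ^d/p\bbZ^d$.

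For part (i), I would start by observing that by the construction of the periodization, $G:=\text{\rm Per}_p f$ satisfies $G(t+e_j/p)=G(t)$ for each standard basis vector $e_j$. Plugging this into the definition of $\widehat G(r)$ and changing variables by $e_j/p$ gives $(1-e^{-2\pi i r_j/p})\widehat G(r)=0$, so $\widehat G(r)=0$ unless every coordinate of $r$ is a multiple of $p$. For $r=pk$, I would decompose $\bbT^d=\bigsqcup_{a\in\{0,\dots,p-1\}^d}(a/p+[0,1/p)^d)$ and use the $1/p$-periodicity to pull the integral back to $[0,1/p)^d$; on this cube $G(s)=f(ps)$, and the factor $e^{-2\pi i pk\cdot a/p}=1$ for each $a\in\bbZ^d$ makes the sum over $a$ just contribute $p^d$. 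The substitution $u=ps$ then converts the remaining integral into $\widehat f(k)$.

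For part (ii), I would first record the Fourier coefficients of $G_\nu P_\nu$. Since $\widehat{G_\nu}$ is supported in $p\bbZ^d$ with $\widehat{G_\nu}(pm)=\widehat{f_\nu}(m)$, convolving with $\widehat{P_\nu}$ (supported in $\cR\cap\bbZ^d$) gives
\begin{equation*}
\widehat{G_\nu P_\nu}(r)=\sum_{k\in\cR\cap\bbZ^d,\,r-k\in p\bbZ^d}\widehat{P_\nu}(k)\,\widehat{f_\nu}\bigl((r-k)/p\bigr).
\end{equation*}
The crucial point is that because $\cR$ is a product of half-open intervals of length exactly $p$, every $r\in\bbZ^d$ admits a \emph{unique} decomposition $r=k(r)+p\,m(r)$ with $k(r)\in\cR\cap\bbZ^d$ and $m(r)\in\bbZ^d$. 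So only one term survives and $\widehat{G_\nu P_\nu}(r)=\widehat{P_\nu}(k(r))\,\widehat{f_\nu}(m(r))$. Taking the product over $\nu$, the left-hand side of the claimed identity has Fourier coefficient
\begin{equation*}
\prod_\nu \widehat{P_\nu}(k(r))\,\widehat{f_\nu}(m(r))
=\widehat{P_1\ast\cdots\ast P_n}(k(r))\cdot \widehat{G_1\ast\cdots\ast G_n}(p\,m(r)).
\end{equation*}

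For the right-hand side, I would compute $\widehat{(G_1\ast\cdots\ast G_n)(P_1\ast\cdots\ast P_n)}(r)$ as the convolution of Fourier series: the factor from $G_1\ast\cdots\ast G_n$ is supported on $p\bbZ^d$, while $P_1\ast\cdots\ast P_n$ still has frequencies in $\cR\cap\bbZ^d$, so by the same uniqueness of the decomposition $r=k(r)+p\,m(r)$ there is again exactly one nonzero term, matching the expression above. The two Fourier series agree coefficient by coefficient, which proves the identity.

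The only real subtlety is the uniqueness of the decomposition $r=k+pm$ with $k\in\cR\cap\bbZ^d$; this is where the half-open product structure of $\cR$ with side length exactly $p$ is used, and it is essentially the only nontrivial input. Everything else reduces to Fourier series bookkeeping.
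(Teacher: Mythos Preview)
Your proposal is correct and is precisely the argument the paper has in mind: the paper's proof is a one-line sketch (``This follows easily by Fourier expansion using the fact that every $k\in\bbZ^d$ can be written in a unique way as $k=pl+k'$ where $l\in\bbZ^d$ and $k'\in\cR$''), and you have simply written out the details of that Fourier bookkeeping. In particular, your identification of the unique decomposition $r=k(r)+p\,m(r)$ with $k(r)\in\cR\cap\bbZ^d$ as the only nontrivial input matches the paper exactly.
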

\begin{proof} This follows easily by Fourier expansion using the fact  that every
$k\in \bbZ^d$ can be written in a unique way as $k=pl+k'$ where $l\in \bbZ^d$ and $k'\in \cR$.
\end{proof}


\begin{lemma}\label{lemma:multiplier-F}
Let $\eta>0$,
$\beta>\alpha$ and let  $k$ be an integer with  $k>\frac{\alpha+1}{\beta-\alpha}$.
Then there exists $m_0=m_0(\alpha,\beta,\mathfrak{n},\eta,\psi,k)\ge N_0(\beta,\mathfrak{n})$ such that for all $m\ge m_0$ with $\gcd(\mathfrak{n}!,m)=1$ the following hold with $N=m^k$ and $f$ as in Lemma \ref{lemma:convol-2}.
\begin{subequations}

  (i) The $(2m+1)$-periodization of $f$,
$$F_m=\text{\rm Per}_{2m+1} f,$$
is smooth with $\int_{\mathbb T^d}F_m(t)dt=1$, and, for $l=0,1,\dots, L$
\begin{equation}\label{eq:bdd-derivative-F}
\|\nabla^l (F_m)\|_\infty\le C(L) m^{kd+(k+1)l}\,.
\end{equation}
Moreover, there are cubes $Q_j$
$j=1,\cdots,(2m+1)^d\lfloor m^{k\beta}\rfloor$,
of side length $m^{-k-1}$,
  such that
\begin{equation}\label{eq:supp-F}
\emph{supp}(F_m)\subset \bigcup_{j=1} ^{(2m+1)^d\lfloor m^{k\beta}\rfloor}
Q_j\,.
\end{equation}
\end{subequations}
\begin{subequations}
(ii) For $r\in \mathbb Z^d\backslash\{0\}$,
\begin{equation}\label{eq:fourier-decay-F}
\frac{|r|^{\alpha/2}|\widehat {F_m}(r)|}{\psi(1/|r|)}\le \eta.
\end{equation}

(iii) For all cubes $Q$ with side length at most $2/\sqrt{m}$.
\begin{equation}\label{eq:regularity-F-1}
\int_Q F_m^{*n}(t)dt \le \eta \psi(|Q|)|Q|^{n\alpha/d},\ 1\le n<d/\alpha
\end{equation}

(iv) For $n\ge d/\alpha$ let $\rho_n=
\frac{n\alpha-d}{2}.$ Then
\begin{equation}\label{eq:bdd-convol-F}
\|F_m^{*n}-1\|_{C^{\rho_n,\psi}}\le \eta,\  d/\alpha\le n\le \fn,
\end{equation}

(v) For all rectangles $R$ of side lengths at least $1/\sqrt{m}$.
\begin{equation}\label{eq:regularity-F-2}
\int_R F_m^{*n}(t)dt \le (1+\eta)|R|,\ n< d/\alpha
\end{equation}

\end{subequations}
\end{lemma}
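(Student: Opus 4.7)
The plan rests on the identity $F_m^{*n} = \text{Per}_{p}(f^{*n})$ with $p = 2m+1$, which is a consequence of Lemma \ref{lemma:periodprop}(i): since $\widehat{F_m}(r) = \widehat f(r/p)$ on $p\bbZ^d$ and vanishes otherwise, the same holds for $F_m^{*n}$ with $\widehat{f^{*n}}(r/p)$ in place of $\widehat f(r/p)$. Each assertion can thus be translated, via the rescaling $t \mapsto pt$, into a corresponding property of $f^{*n}$ already established in Lemma \ref{lemma:convol-2}. Throughout, $N = m^k$ and $pN \asymp m^{k+1}$; all estimates reduce to exponent inequalities in $m$ which become negative under the hypothesis $k > (\alpha+1)/(\beta-\alpha)$, after absorbing $\psi$ through the polynomial lower bound $\psi(t) \ge c_\varepsilon t^\varepsilon$ (a consequence of \eqref{psilimit}) with $\varepsilon > 0$ chosen sufficiently small, and taking $m_0$ large enough.

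For (i), $\int F_m = \widehat{F_m}(0) = \widehat f(0) = 1$; the support of $F_m$ reproduces that of $f$ in each of the $p^d$ period cells rescaled by $1/p$, yielding \eqref{eq:supp-F}; and $\|\nabla^l F_m\|_\infty = p^l \|\nabla^l f\|_\infty$ combined with Lemma \ref{lemma:convol-2}(i) yields \eqref{eq:bdd-derivative-F}. For (ii), we estimate $\widehat{F_m}(r)$ at $r = pk$, $k \ne 0$, where $\widehat{F_m}(r) = \widehat f(k)$. Using \eqref{eq:fourier-decay-f} --- the trivial bound for $|k| \le N$ and the rapid-decay bound for $|k| > N$ --- the worst case is $|k| \asymp N$, so $|r| \asymp m^{k+1}$; combined with $\psi(1/|r|)^{-1} \le c_\varepsilon |r|^\varepsilon$, this produces a power of $m$ with exponent $\tfrac{\alpha - k(\beta-\alpha)}{2} + O((k+1)\varepsilon)$, negative under the hypothesis on $k$.

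For (iii), we split into the cases $r \le 1/p$ and $1/p < r \le 2/\sqrt m$ for the sidelength $r$ of $Q$. In the first case $Q$ meets only $O(1)$ period cells, and the change of variable $s = pt - j$ combined with \eqref{eq:regularity-f} yields $\int_Q F_m^{*n} \lesssim (rp)^{n\beta} p^{-d} \log N$; the ratio to $\psi(r^d) r^{n\alpha}$ is worst at $r = 1/p$, producing $p^{n\alpha-d+O(\varepsilon)} \log N \to 0$ since $n\alpha < d$. In the second case, counting period cells intersecting $Q$ gives $\int_Q F_m^{*n} \lesssim r^d$, and $r^{d-n\alpha}/\psi(r^d) \lesssim (2/\sqrt m)^{d-n\alpha-O(\varepsilon)} \to 0$. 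For (v), $R$ decomposes into full period cells (each contributing exactly $p^{-d}$ to the integral) plus a boundary shell of volume $O(p^{-1} |R| \sum_i 1/L_i) = O(|R|/\sqrt m)$ by the assumption $L_i \ge 1/\sqrt m$; hence $\int_R F_m^{*n} \le (1 + O(m^{-1/2}))|R|$, which lies below $(1+\eta)|R|$ for large $m$.

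The main step is (iv). Writing $F_m^{*n} - 1 = \text{Per}_p(f^{*n}-1)$, the periodization scaling together with \eqref{eq:Lip-1-f} yields $\|\nabla^l(F_m^{*n}-1)\|_\infty \lesssim (pN)^l A$ for $l = 0,1,\dots$, with $A = (\log N)^{(n+1)/2} N^{-(n\beta-d)/2}$. A Kolmogorov-type interpolation between the $L^\infty$ and first-derivative bounds on $\partial^\gamma(F_m^{*n}-1)$ for $|\gamma| = \lfloor \rho_n \rfloor$, at the balance scale $h_* = 1/(pN)$, produces $\omega_{\rho_n - \lfloor \rho_n \rfloor, \psi}(\partial^\gamma(F_m^{*n}-1)) \lesssim A (pN)^{\rho_n - \lfloor \rho_n \rfloor}/\psi(1/(pN))$; combined with the lower-order estimate $\|\partial^\gamma(F_m^{*n}-1)\|_\infty \lesssim (pN)^{\lfloor \rho_n\rfloor} A$, this controls the full $C^{\rho_n,\psi}$-norm. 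Inserting $pN \asymp m^{k+1}$ and the polynomial lower bound on $\psi$, the exponent on $m$ collapses to $\tfrac{n\alpha - d - kn(\beta-\alpha)}{2} + (k+1)\varepsilon$, which is negative uniformly on $n \in [d/\alpha, \mathfrak n]$ under the standing hypothesis on $k$. The chief difficulty is executing these interpolations uniformly in $n$ while tracking all $\log N$ factors and the $\psi$-dependent constants; this is handled by choosing $\varepsilon$ small enough and $m_0$ large enough in terms of $(\alpha, \beta, \mathfrak n, \eta, \psi, k)$.
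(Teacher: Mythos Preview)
Your argument is correct and follows essentially the same route as the paper: the identity $F_m^{*n}=\text{Per}_{2m+1}(f^{*n})$, the period-cell counting for (iii) and (v), and the sup-norm/derivative interpolation at scale $\sim 1/(pN)$ for (iv) are exactly the paper's mechanisms. One small bookkeeping slip: in (iv) your stated bound $\omega_{\rho_n-\lfloor\rho_n\rfloor,\psi}(\partial^\gamma(F_m^{*n}-1))\lesssim A(pN)^{\rho_n-\lfloor\rho_n\rfloor}/\psi(1/(pN))$ should carry an extra $(pN)^{\lfloor\rho_n\rfloor}$ coming from $\|\partial^\gamma(F_m^{*n}-1)\|_\infty$, i.e.\ the right-hand side is $A(pN)^{\rho_n}/\psi(1/(pN))$; your final exponent $\tfrac{n\alpha-d-kn(\beta-\alpha)}{2}+(k+1)\varepsilon$ already reflects this correction, so the conclusion is unaffected. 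The only stylistic difference from the paper is that in (ii) you absorb $\psi$ via the polynomial lower bound from \eqref{psilimit}, whereas the paper uses the doubling condition \eqref{psidoubling} to sum over dyadic shells $|r|\sim 2^l m^{k+1}$; both are equally valid.
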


\begin{proof}
Part (i) is
straightforward given Lemma \ref{lemma:convol-2}.
We thus just need to give the proof of (ii).

 We first recall from Lemma \ref{lemma:periodprop} that
$$\widehat {F_m}\big((2m+1)k\big)=\hat f(k)$$
for $k\in \mathbb Z^d$, and $\widehat {F_m}(r)=0$ for $r$ not of this form. Thus for $r\neq 0$, by \eqref{eq:fourier-decay-f}
\begin{align}\notag
|\widehat {F_m}(r)|
&\le \frac{\sqrt{8}\log^{1/2}(8m^{kd})}{m^{k\beta/2}}
\min\Big(\frac{C(\La)m^{k\La}(2m+1)^\La}{|r|^\La},1\Big)\\
\notag
&\le C_{\La,k}\frac{\log^{1/2} m}{m^{k\beta/2}}\min\Big(\frac{m^{(k+1)\La}}{|r|^\La},1\Big)\\
\notag
&= C_{\La,k}\frac{\log^{1/2}m}{m^{(k(\beta-\alpha)-\alpha)/4}}\frac{\psi(m^{-k-1})^{-1}}{m^{(k(\beta-\alpha)-\alpha)/4}}
\frac{\psi(m^{-k-1})}{m^{(k+1)\alpha/2}}\min\Big(\frac{m^{(k+1)\La}}{|r|^\La},1\Big)\\
\label{beforedoubling}
&\le \eta \frac{\psi(m^{-k-1})^{-1}}{m^{(k(\beta-\alpha)-\alpha)/4}}
\frac{\psi(m^{-k-1})}{m^{(k+1)\alpha/2}}\min\Big(\frac{m^{(k+1)\La}}{|r|^\La},1\Big)
\end{align}
provided that $\ge m\ge m_0$  and $m_0$ is chosen large enough.
We separately consider  the cases $0<|r|\le m^{k+1}$ and $|r|\ge m^{k+1}$.
In the first case we obtain \eqref{eq:fourier-decay-F} directly from
\eqref{beforedoubling}, provided that $m_0$ is large enough.  Now let
$2^l\le r/m^{k+1}<2^{l+1}$ with $l\ge 0$. Then by the monotonicity of $\psi$
and the doubling condition \eqref{psidoubling},
$$\psi(m^{-k-1})\le \psi(2^{l+1}|r|^{-1}) \le C_\psi^{l+1} \psi(|r|^{-1})$$
and we see in this case
\eqref{beforedoubling} is estimated by
$$
 \eta \frac{\psi(m^{-k-1})^{-1}}{m^{(k(\beta-\alpha)-\alpha)/4}}
 2^{(l+1)\alpha} C_\psi^{l+1} 2^{-l\La} \psi(|r|^{-1})|r|^{-\alpha/2}.
 $$
 Thus if above we choose $\La$ so large that $2^{\alpha+2-\La} C_\psi\le 1$
 we may sum in $l$. Then  by choosing $m_0$ large we obtain
 \eqref{eq:fourier-decay-F} for all $r\neq 0$.

 \medskip

\noi{\it Proof of (iv).} Notice that by \eqref{eq:Lip-1-f} and our assumption on $k$,
$$\|F_m^{*n}-1\|_{C^{\lfloor\rho_n\rfloor}}\le N^{-(\rho_n-\lfloor\rho_n\rfloor)-\epsilon}$$
for some $\epsilon>0$ and sufficiently large $m$. Setting
$$G=\nabla^{\lfloor\rho_n\rfloor} \big(F_m^{*n}-1)$$
it remains to show
$$\omega_{\rho_n-\lfloor\rho_n\rfloor,\psi}(G)\le \eta/2$$
for $m\ge m_0$ and large enough $m_0$.

Again by \eqref{eq:Lip-1-f}, we have
\begin{align*}
\|G\|_\infty+ N^{-1}
\|\nabla G\|_\infty\le N^{-(\rho_n-\lfloor\rho_n\rfloor)-\epsilon}
\end{align*}
for some $\epsilon>0$ and sufficiently large $m$. Now if $0<|h|\le 1/N$, then by the mean value theorem, for any $x\in\mathbb T^d$,
\begin{align*}
&\frac{|G(x+h)-G(x)|}{|h|^{\rho_n-\lfloor\rho_n\rfloor}\psi(|h|)}
= \frac{|G(x+h)-G(x)|}{|h|} \frac{|h|^{1-(\rho_n-\lfloor\rho_n\rfloor)}}{\psi(|h|)}\\
&\quad\le {N^{1-(\rho_n-\lfloor\rho_n\rfloor)-\epsilon}} C_{\psi,\epsilon}{|h|^{1-(\rho_n-\lfloor\rho_n\rfloor)-\epsilon/2}}\le C_{\psi,\epsilon} N^{-\epsilon/2}\le \eta/2
\end{align*}
provided that $m_0$ is chosen large enough. If $|h|\ge 1/N$, then
\[\frac{|G(x+h)-G(x)|}{|h|^{\rho_n-\lfloor\rho_n\rfloor}\psi(|h|)}
\le \frac{2\|G\|_\infty}{|h|^{\rho_n-\lfloor\rho_n\rfloor}\psi(|h|)}
\le 2N^{-(\rho_n-\lfloor\rho_n\rfloor)-\epsilon}\frac{N^{\rho_n-\lfloor\rho_n\rfloor}}{\psi(1/N)}
\le \eta/2
\]
provided that $m_0$ is chosen large enough. This proves \eqref{eq:bdd-convol-F}.

\medskip

\noi{\it Proofs of (iii) and (v).}
In what follows we say that a {\it fundamental cube } is a cube of the form
$\prod_{i=1}^d [\frac{\nu_i}{2m+1}, \frac{\nu_i+1}{2m+1})$ where
$\nu_i\in\{0,\dots 2m\}$ for each $i=1,\dots,d$.

We first consider the claim (v). Let $R$ be a rectangle with side lengths
$l_1\ge \cdots\ge l_d$, and assume that $l_d\ge m^{-1/2}$.
 Notice that $R$ is contained in a union of no more than
$$(2m+1)^d l_1\cdots l_d+C_d (2m+1)^{d-1}l_1\cdots l_{d-1}$$
many fundamental cubes of size $1/(2m+1)^d$. Since the integral of $F_m^{*n}$ over any fundamental cube is equal to $(2m+1)^{-d}$, we see that
\begin{align*}
\int_I F_m^{*n}(x)dx
&\le l_1\cdots l_d+C_d (2m+1)^{-1}l_1\cdots l_{d-1}\\
&= |R|+ \frac{C_d}{(2m+1)l_d}|R|\le |R|+ \frac{C_d}{2\sqrt{m}}|R|.
\end{align*}
Thus  \eqref{eq:regularity-F-2} is satisfied if $m_0$ is chosen large enough.

In order to show  (iii)
we separately consider the two cases where
the side length of $Q$ is larger or smaller than $(2m+1)^{-1}$.

\medskip{\it Case 1: $(2m+1)^{-1}\le |Q|^{1/d}\le {2}m^{-1/2}.$}
In this case the argument above shows
$$\int_QF_m^{*n}(x)dx\le (1+C_d)|Q|$$ and
\eqref{eq:regularity-F-1} will follow if
$$(1+C_d)|Q|\le \eta \psi(|Q|)|Q|^{n\alpha/d}.$$
But this is  indeed the case if $|Q|\le 2/\sqrt m\le 2/\sqrt {m_0}$  and $m_0$ is large enough.

{\it Case 2: $|Q|^{1/d} <(2m+1)^{-1}.$}
We first assume  that $Q$ is contained in a  $[0,(2m+1)^{-1})^d$.
 Then by \eqref{eq:regularity-f}
\begin{align*}
\int_Q F_m^{*n}(x)dx
&=\frac{1}{(2m+1)^d}\int_{(2m+1)Q} f^{*n}(t)dt\\
&\le\frac{1}{(2m+1)^d} 2^d \big((2m+1)^d|Q|)\big)^{n\beta/d} \log N\\
&=\frac{2^d k \log m}{(2m+1)^{d-n\beta}} |Q|^{n\beta/d}
\le |Q|^{n\beta/d}
\end{align*}
provided that $m_0$ is chosen large enough. \eqref{eq:regularity-F-1} will follow if
$$|Q|^{n\beta/d}\le \eta\psi(|Q|)|Q|^{n\alpha/d},$$
But this is  the case if $|Q|^{1/d}\le 1/(2m+1)\le 1/m_0$ is small enough.
By periodicity the above argument holds true if $Q$ is contained in any fundamental cube of size $(2m+1)^{-d}$.   Moreover if $Q$ is any cube of size
$\le (2m+1)^{-d}$ then we may split $Q$ in $2^d$ rectangles supported in fundamental cubes and apply the same  argument to each such rectangle.
This finishes the proof
of \eqref{eq:regularity-F-1}.
\end{proof}

\subsection{\it Approximation}
We are now ready to prove Theorem \ref{dense}. It remains to show that,
for every $\ga\in (\alpha,d)$ and every $\eps_1>0$ the set
$\fV^{\gamma,\eps_1}$ is dense in $\overline {\fV_0}$.
This reduces to approximating $(K, g)\in \fV_0$ where $g$ is smooth.
We may further assume that there exists a small constant $c>0$ such that
$g$ satisfies
\begin{equation}\label{bounded-convolution-low-c}
\int_Q g^{*n}(x)dx\le (1-c)\psi(|Q|)|Q|^{n\alpha/d}
\end{equation}
for all cubes $Q$ and $1\le n <d/\alpha$. This is because otherwise we can approximate $(K,g)$ by $(K,(1-c)g)$ and let $c\rightarrow 0$.

\begin{lemma}\label{lemma:approx}
Suppose  $\alpha<\gamma<d$, $\varepsilon_1>0$, $c\in (0,1)$, $(K,g)\in \fV_0$ where $g$ is a smooth function satisfying \eqref{bounded-convolution-low-c}. Let $\epsilon>0$. Then there exists a compact set $F$ and a smooth function $f$ such that $(F,fg)\in \mathcal \fV^{\gamma,\varepsilon_1}$ and
$$d_{\fW}\big((K,g),(F,fg)\big)<\epsilon.$$
\end{lemma}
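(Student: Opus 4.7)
The plan is to take $f = F_m$ from Lemma \ref{lemma:multiplier-F} for a suitable choice of parameters and to build the compact set $F$ as a union of those little cubes supporting $F_m$ which lie within $1/m$ of $K$. The product-convolution identity of Lemma \ref{lemma:periodprop}(ii), applied after Fourier truncation of both $g$ and of the partition elements $\chi_i^{(n)}$, reduces the higher-order convolution estimates to multiplication by $F_m^{*n}$, whose deviation from $1$ is controlled by \eqref{eq:bdd-convol-F}. First choose $\beta \in (\alpha, \gamma)$ and an integer $k$ with $k > (\alpha+1)/(\beta-\alpha)$ (required by Lemma \ref{lemma:multiplier-F}) and $\gamma(k+1) > d + k\beta$ (so that the covering of $F$ has the right volume-to-multiplicity ratio). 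Fix $\fn$ with $\sum_{n > \fn} 2^{-n} < \epsilon/8$; only $d/\alpha \le n \le \fn$ will need detailed convolution control. Pick $\eta > 0$ small (depending on $g, \fn, \psi$, to be fixed later), apply Lemma \ref{lemma:multiplier-F} to obtain $m_0$, and select $m \ge m_0$ with $\gcd(\fn!, m) = 1$ and $m$ large enough for the estimates below.

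By \eqref{eq:supp-F}, $\supp(F_m)$ is contained in a union of $M := (2m+1)^d\lfloor m^{k\beta}\rfloor$ cubes $\{Q_j\}$ of side $m^{-k-1}$, arranged $O(1/m)$-densely in $\bbT^d$. Let $F$ be the union of those $Q_j$ whose intersection with the $1/m$-neighborhood of $K$ is nonempty. Then $\supp(fg) \subset K \cap \supp(F_m) \subset F$, $d_\fK(K, F) \le 2/m$, and the cubes have volume $m^{-d(k+1)} < \epsilon_1 M^{-d/\gamma}$ for $m$ large by the choice of $k$, so \eqref{covering-property} is satisfied.

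For the Fourier terms of $d_\fW$, the relation $\hat F_m(0) = 1$ gives
\[
\widehat{fg}(r) - \hat g(r) = \sum_{j \ne 0}\hat F_m(j)\,\hat g(r - j),
\]
and combining \eqref{eq:fourier-decay-F} with the rapid decay of $\hat g$ bounds both $|\hat g(0) - \widehat{fg}(0)|$ and the weighted supremum by $\eta\,C(g, \psi) < \epsilon/4$. For the convolution sum in \eqref{eq:Wmetric} and each $n$-tuple $i \in \fI_n^n$, note $\chi_{i_j}^{(n)} fg = F_m\cdot(\chi_{i_j}^{(n)} g)$; let $\tilde g$ and $\tilde\chi_{i_j}^{(n)}$ be Fourier partial sums of $g, \chi_{i_j}^{(n)}$ at frequencies of $\ell^\infty$-norm at most $M_1, M_2$ respectively, with $M_1 + M_2 \le m$. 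Then $\tilde\chi_{i_j}^{(n)}\tilde g$ has Fourier support in a box of side $2m+1$, so Lemma \ref{lemma:periodprop}(ii) yields
\[
(F_m\tilde\chi_{i_1}^{(n)}\tilde g) * \cdots * (F_m\tilde\chi_{i_n}^{(n)}\tilde g) = F_m^{*n}\cdot(\tilde\chi_{i_1}^{(n)}\tilde g * \cdots * \tilde\chi_{i_n}^{(n)}\tilde g).
\]
By \eqref{eq:bdd-convol-F}, $\|F_m^{*n} - 1\|_{C^{\rho_n,\psi}} \le \eta$; the truncation errors $g - \tilde g$ and $\chi_i^{(n)} - \tilde\chi_i^{(n)}$ decay faster than any polynomial in $M_1, M_2$, which are chosen large enough (still with $M_1 + M_2 \le m$, exploiting $g \in C^\infty$) to absorb the growth $\|F_m\|_{C^l} \lesssim m^{kd+(k+1)l}$ from \eqref{eq:bdd-derivative-F}. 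Summing over the finite range $n \in [d/\alpha, \fn]$ together with the tail $\sum_{n > \fn} 2^{-n} < \epsilon/8$ gives a convolution contribution under $\epsilon/2$.

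Membership $(F, fg) \in \fV$: for $n \ge d/\alpha$ the $C^{\rho_n,\psi}$-regularity of the partitioned convolutions is immediate from the identity and $\|F_m^{*n} - 1\|_{C^{\rho_n,\psi}} \le \eta$. For $1 \le n < d/\alpha$, bound \eqref{bounded-convolution-low} splits by cube size: for $|Q|^{1/d} \le 2/\sqrt m$ the identity gives $(F_m g)^{*n}(Q) \approx \int_Q F_m^{*n}\tilde g^{*n}$, and \eqref{eq:regularity-F-1} with $\|\tilde g^{*n}\|_\infty \lesssim 1$ yields $(F_m g)^{*n}(Q) \le C(g)\eta\,\psi(|Q|)|Q|^{n\alpha/d}$, below the target for $\eta$ small; for $|Q|^{1/d} > 2/\sqrt m$, \eqref{eq:regularity-F-2} combined with the strict inequality \eqref{bounded-convolution-low-c} (subdividing $Q$ into subcubes on which $g^{*n}$ is essentially constant) gives $(F_m g)^{*n}(Q) \le (1-c)(1+\eta)\psi(|Q|)|Q|^{n\alpha/d} < \psi(|Q|)|Q|^{n\alpha/d}$. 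The main technical obstacle is this identity-plus-error analysis: the approximations must be made uniformly in the $C^{\rho_n,\psi}$ norms for $n \in [d/\alpha, \fn]$, with $\eta$ chosen \emph{before} $m$ so that the rapid decay of the truncation errors of $g$ and $\chi_i^{(n)}$ dominates the polynomial growth of $F_m$.
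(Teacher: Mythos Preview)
Your proposal is correct and follows essentially the same strategy as the paper: multiply by $F_m$, use trigonometric truncation so that Lemma~\ref{lemma:periodprop}(ii) converts the convolution $(F_m g)^{*n}$ into the product $F_m^{*n}\cdot(\text{trig approx of }g)^{*n}$, then invoke \eqref{eq:bdd-convol-F}, \eqref{eq:regularity-F-1}, \eqref{eq:regularity-F-2} and the slack \eqref{bounded-convolution-low-c} to close the estimates.

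There are two minor implementation differences. First, the paper builds the compact set as $H=\supp(F_m g)\cup A_{\ep'}$ where $A_{\ep'}$ is a finite $\ep'$-net of $K$; this makes the Hausdorff-distance bound immediate and adds only $\#A_{\ep'}$ extra cubes to the covering. Your alternative (selecting the $Q_j$ near $K$ and using that the $Q_j$ are $O(1/m)$-dense by the $1/(2m+1)$-periodicity of $F_m$) is a legitimate variant. Second, for the high-order convolution terms the paper truncates the product $\chi_{i_j}^{(n)}g$ at frequency level $m$ in one stroke, whereas you truncate $g$ and $\chi_{i_j}^{(n)}$ separately at levels $M_1,M_2$ with $M_1+M_2\le m$; either route lands the Fourier support inside the box of side $2m+1$ needed for Lemma~\ref{lemma:periodprop}(ii). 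Neither difference is substantive.
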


\begin{proof} We let $\ep'=\ep/100$.
Fix $\beta$ with $\alpha<\beta<\gamma$. Choose
$\fn\equiv\fn(\ep)=1+\lceil\log_2\tfrac{1}{\ep'}\rceil$ so that
\Be\label{choiceofn}
\sum_{n>\fn}2^{-n}<\ep'.
\Ee
Fix an integer $k$ such that $$k>\frac{d-\gamma}{\gamma-\beta}.$$

With these parameters we consider the functions $F_m$ as constructed in Lemma \ref{lemma:multiplier-F}. We let
$A_{\ep'}$ to be a finite  $\ep'$-net of $K$; i.e. a finite set of points in $K$ such that $K$ is contained in the union of balls of radius $\ep'$ centered at points in $A_{\ep'}$.
We shall show that  if $\eta>0$ is chosen small enough and if
$m\ge m_0(\alpha,\beta,\eta,\psi,k)$ is chosen large enough, then
the choice
$$ (H, F_mg )\quad \text{ with } \quad H=\supp (F_m g)\cup A_{\ep'}
$$ will give the desired  approximation of $(K, g)$.

\medskip

{\it Notation:} In this proof we shall write $B_1\lc B_2$ for two nonnegative quantities $B_1$, $B_2$ if  $B_1\le CB_2$ where $C$ may only depend on $\alpha$, $\beta$, $\gamma$, $\eps_1$, $k$, $d$ and $\ep$  and on the function $g$ (so $C$ will not depend on $\eta$ or $m$).
 We shall call such a $C$ an admissible constant.

\medskip

To show that $(H,F_mg\la)\in\fV^{\gamma,\varepsilon_1}$, we only need to verify \eqref{bounded-convolution-low} and \eqref{covering-property}. We postpone  \eqref{bounded-convolution-low} to a later part of the proof and now verify \eqref{covering-property}. By \eqref{eq:supp-F}
$$\text{supp}(F_mg)\subset \bigcup_{j} Q_j$$
where $Q_j$, $j=1,\cdots,(2m+1)^d\lfloor m^{k\beta}\rfloor$, are cubes with
side length $m^{-k-1}$. Thus $H=\text{supp}(F_mg)\cup A_{\ep'}$ can be covered by $$M=(2m+1)^d\lfloor m^{k\beta}\rfloor+(\#A_{\ep'})$$ cubes of side length ${m^{-k-1}}$. To verify \eqref{covering-property}, it now suffices to show
${m^{-k-1}}<\varepsilon_1 M^{-1/\gamma},$
which follows from
$3^{d} m^{k\beta+d}+(\#A)<{\varepsilon_1}^{\gamma} m^{k\gamma+\gamma}.$
Since $k>\tfrac{d-\gamma}{\gamma-\beta}$, the last inequality holds provided that $m$ is large enough.

We need to show that for sufficiently large  $m$
$$d_{\fW}\big((K,g\la),(H,F_m g\la)\big)<\epsilon.$$
Since $\text{supp}(F_mg)\subset\text{supp}(g)\subset K$,
we have $H=\text{supp}(F_mg)\cup A_{\ep'}\subset K$. Thus the Hausdorff distance of $H$ and $K$ satisfies
\Be\label{Hddist}d_{\fK}(H,K)\le \ep'.
\Ee

To handle the other components of $d_{\fW}$, we set
\Be \label{choiceofL}
L=10 \fn k d
\Ee
and
 we will use the fact that, since $g$ is smooth, there exists an admissible  constant $C>0$ such that
\begin{equation}\label{eq:g-approx}
\sum_{|r|_{_\infty}\ge m}|r|^L|\widehat g(r)|\le C m^{-(k+2)L}
\end{equation}
for all $m\ge 1$.
By the
periodicity of $F_m$, we have
\begin{align*}
|\widehat g(0)-\widehat{F_mg}(0)|
= \Big|\sum_{u\neq 0} \widehat {F_m}(-u)\widehat g(u) \Big|
\le \sum_{|u|_{_\infty}\ge m} |\widehat g(u)|\le C m^{-1}
\end{align*}
and hence
\Be\label{zeroFourier}
|\widehat g(0)-\widehat{F_mg}(0)|\le\ep'
\Ee
provided that $m$ is large enough.

For the nonzero Fourier coefficients we have,
\begin{align*}
&|\widehat{g}(r)-\widehat{F_mg}(r)|\,
= \,\Big|\sum_{u\neq r} \widehat {F_m}(r-u)\hat g(u)\Big |\\
&\le \sum_{|u|\le |r|/2} |\widehat {F_m}(r-u)\hat g(u)|
+\sum_{\substack{|u|> |r|/2\\ u\neq r}} |\widehat {F_m}(r-u)\hat g(u)|.
\end{align*}
By  \eqref{eq:fourier-decay-F},  this is estimated by
\begin{align*}
&\eta C_\psi \psi(|r|^{-1})2^{\alpha/2}|r|^{-\alpha/2} \sum_{|u|\le |r|/2} |\widehat g(u)|
+\eta \psi(1)\sum_{|u|> |r|/2} |\hat g(u)|\\
&\lc\big(\eta \psi(|r|^{-1})|r|^{-\alpha/2}+ \eta |r|^{-d}\big)\lc \eta \psi(|r|^{-1})|r|^{-\alpha/2}
\end{align*}
and this is $< \psi(|r|^{-1})|r|^{-\alpha/2} \ep'$
provided that $\eta>0$ is chosen  small enough. With this choice of
$\eta$ we have proved
\begin{equation}\label{eq:fourier-approx}
\sup_{r\in\mathbb Z^d\backslash\{0\}}\frac{
|r|^{\alpha/2}}{\psi(|r|^{-1})}|\widehat{g}(r)-\widehat{F_mg}(r)|< \ep'\,.
\end{equation}
if $\eta$ is sufficiently small and $m$ is sufficiently large.

It remains to show that \eqref{bounded-convolution-low} holds for
$\mu=F_m g\la$, i.e.
\Be\label{bd-conv-low}
\int_Q (F_m g)^{*n} (x)dx \le \psi(|Q|)|Q|^{n\alpha/d}, \quad 1\le n<d/\alpha
\Ee
and that, for $d/\alpha\le n\le \mathfrak{n}$
\begin{equation}\label{eq:convol-approx}
\sum_{i_1,\cdots,i_n}\big \|\big(\chi_{i_1}^{(n)}g\big)*\cdots*\big(\chi_{i_n}^{(n)}g\big)-\big(\chi_{i_1}^{(n)}F_m g\big)*\cdots*\big(\chi_{i_n}^{(n)}F_m g\big)
\big\|_{C^{\rho_n,\psi}}<\ep',
\end{equation}
provided that $\eta$ is small enough and $m$ is large enough.
Notice that
by the definition of the metric $d_\fW$ and
by \eqref{choiceofn} the corresponding terms for $n>\fn$ can be ignored.

\medskip

\noi{\it Proof of \eqref{bd-conv-low}.}
Following \cite{koerner2008} we write
$$P_m(x)=\sum_{|r|_{_\infty}\le m}\hat g(r) e^{2\pi i \inn rx}.$$
By \eqref{eq:g-approx} we have, for sufficiently large $m$
\Be \label{g-pm}\|g-P_m\|_{C^L}\le Cm^{-(k+2)L}\le 1.\ee
We first verify that for every $n=1,\dots,\fn$,
\begin{subequations}
\begin{gather}
\label{prelCLbd1}
\|g^{*n}-(P_m)^{*n}\|_{C^L}\le m^{-1},\\
\label{prelCLbd2}\|(F_m g)^{*n}-(F_mP_m)^{*n}\|_\infty\le m^{-1},
\end{gather}
provided that  $m$ is chosen large enough.
\end{subequations}
To see this we write
\begin{align*}
&g^{*n}-(P_m)^{*n}
=\big((g-P_m)+P_m\big)^{*n}-(P_m)^{*n}\\
&=(g-P_m)^{*n}+\sum_{\nu=1}^{n-1}\binom{n}{\nu}
(g-P_m)^{*(n-\nu)}*(P_m)^{*\nu}.\end{align*}
Therefore, using $\binom{n}{\nu}= \tfrac{n}{n-\nu} \binom{n-1}{\nu}$ for $1\le\nu\le n-1$ and \eqref{g-pm},
\begin{align*}
&\|g^{*n}-(P_m)^{*n}\|_{C^{L}}\le  \|g-P_m\|_{C^{L}}\sum_{\nu=0}^{n-1}
\binom{n}{\nu}\|P_m\|_\infty^\nu \\
&\le \|g-P_m\|_{C^{L}}\big(1+\|P_m\|_\infty\big)^{n-1}n
\lc m^{-2}n \big(2+\|g\|_\infty\big)^{n-1}
\end{align*}
and this gives \eqref{prelCLbd1}
provided that $m$ is large enough.

By \eqref{eq:bdd-derivative-F} and the first estimate in \eqref{g-pm} we have
\begin{align*}
&\|F_m(P_m-g)\|_{C^{L}}
\lc\|F_m\|_{C^{L}}\|P_m-g\|_{C^{L}}\\
&\qquad\lc m^{kd+(k+1)L} m^{-(k+2)L} \le Cm^{kd-L}\le 1
\end{align*}
for sufficiently large  $m$
The same argument as above then gives
\begin{align*}
&\|(F_m P_m)^{*n}-(F_m g)^{*n}\|_{C^{L}}
\le  \|F_m(P_m-g)\|_{C^{L}} n\big(1+\|F_m g\|_\infty\big)^{n-1}n\\
&\lc m^{kd-L} \big(1+m^{kd}\|g\|_\infty\big)^{n-1}n \lc
m^{nkd-L} \big(1+\|g\|_\infty\big)^{n-1}n
\end{align*} and this gives
and this gives \eqref{prelCLbd2}
provided that $m$ is large enough.

As a consequence
of part (ii) of Lemma \ref{lemma:periodprop} we have
\begin{equation}\label{eq:separate}
(F_m P_m)^{*n}=(F_m)^{*n}(P_m)^{*n}.
\end{equation}
Now for fixed $n<d/\alpha$ and a cube $Q$,  we have by \eqref{eq:separate}
and \eqref{prelCLbd2}
\begin{align*}
\int_Q(F_m g)^{*n}(x)dx
&\le \Big|\int_Q(F_m P_m)^{*n}(x)dx\Big| + \Big|\int_Q \big((F_m g)^{*n}(x)-(F_m P_m)^{*n}(x)\big)dx\Big|\\
&\le \Big|\int_Q (F_m)^{*n}(x) (P_m)^{*n}(x)dx\Big| + m^{-1}|Q|\\
&\le \Big|\int_Q (F_m)^{*n}(x) (P_m)^{*n}(x)dx\Big| + Cm^{-1}\psi(|Q|)
|Q|^{n\alpha/d}\\
&\le \Big|\int_Q(F_m)^{*n}(x) (P_m)^{*n}(x)dx\Big| + \frac{c}{2}
\psi(|Q|)|Q|^{n\alpha/d}
\end{align*}
for sufficiently large $m$.
Thus, in order to finish the proof of \eqref{bd-conv-low} we must show
\Be\label{bd-conv-low-mod}
\int_Q (F_m)^{*n}(x)  (P_m)^{*n}(x)dx\le  (1-\frac{c}{2})\psi(|Q|)|Q|^{n\alpha/d}
\Ee
If the side length of $Q$ is $\le 2/\sqrt{m}$, then
\begin{align*}
&\Big|\int_Q (F_m)^{*n}(x) (P_m)^{*n}(x)dx\Big|
\le \|(P_m)^{*n}\|_\infty \int_Q(F_m)^{*n} (x)dx
\\&\le (1+\|g^{*n}\|_\infty) \eta  \psi(|Q|)|Q|^{n\alpha/d}
\le (1-\frac{c}{2})\psi(|Q|)|Q|^{n\alpha/d}
\end{align*}
where in the last inequality $\eta$ is chosen sufficiently large (the second inequality follows from
\eqref{eq:regularity-F-1}).

If the side length of $Q$ is $> 2/\sqrt{m}$, then $Q$ can be split into rectangles $R$ of side lengths between $1/\sqrt m$ and $2/\sqrt{m}$. Writing
$$a_R=\dashint_R g^{*n}(x)dx, \qquad b_R=\dashint_R (P_m)^{*n}(x)dx,$$
we then have
$$\|(P_m)^{*n}-b_R\|_{L^\infty(R)}\lc m^{-1/2}\|(P_m)^{*n}\|_{C^1}\lc
m^{-1/2} $$ and
$$|b_R-a_R|\le \| g^{*n}-(P_m)^{*n}\|_\infty\le m^{-1},$$
by \eqref{prelCLbd1}.  Now
\begin{align*}
&\Big|\int_Q (F_m)^{*n}(x) (P_m)^{*n}(x)dx\Big| \,\le \Big|\sum_R \int_R (F_m)^{*n} (x)a_R dx\Big|\\
&\ +\Big|\sum_R \int_R (F_m)^{*n} (x)(b_R-a_R)dx\Big|+\Big|\sum_R \int_R (F_m)^{*n}(x)\big(b_R-(P_m)^{*n}(x) \big) dx\Big|\\
&\le \sum_R a_R (1+\eta)|R|+\Big(\frac{1}{m}+\frac{C}{\sqrt m}\Big)\sum_R \int_R (F_m)^{*n}(x) dx\\
&\le (1+\eta)\int_Q g^{*n}(x)dx+\frac{C'}{\sqrt m} \int_Q (F_m)^{*n}(x) dx
\end{align*}
where $C'$ is admissible. By \eqref{eq:regularity-F-2} and
\eqref{bounded-convolution-low-c} the
 last expression
is less than or equal to
\begin{align*}&(1+\eta)(1-c)\psi(|Q|)|Q|^{n\alpha/d}
+\frac{C'}{\sqrt m}(1+\eta)|Q|\\
&\le \Big((1-\frac{3}{4}c)
+\frac{C''}{\sqrt m}\Big)\psi(|Q|)|Q|^{n\alpha/d}\le (1-\frac{c}{2})\psi(|Q|)|Q|^{n\alpha/d}
\end{align*}
provided that $\eta$ is small enough and $m$ is large enough.

In either case we have verified
\eqref{bd-conv-low-mod},
and this concludes the proof of
\eqref{bd-conv-low}.

\medskip

\noi {\it Proof of  \eqref{eq:convol-approx}.} Fix $n$ with $d/\alpha\le n\le
 \fn$
and $i=(\imath_1,\cdots,\imath_n) \in (\fI_n)^n$.
 Write
$$g_j=\chi_{\imath_j}^{(n)}g,$$ for $j=1,\cdots, n$, and
$$P_{j,m}(x)=\sum_{|r|_\infty\le m} \widehat {g_j}(r) e^{2\pi i\inn{r}{x}}.$$
\eqref{eq:convol-approx} reduces to estimating
\begin{align*}
&\|g_1*\cdots*g_n-(F_m g_1)*\cdots*(F_m g_n)\|_{C^{\rho_n,\psi}}\\
&\le \|g_1*\cdots*g_n-P_{1,m}*\cdots*P_{n,m}\|_{C^{\rho_n,\psi}}\\
& +\|P_{1,m}*\cdots*P_{n,m}-(F_m P_{1,m})*\cdots*(F_m P_{n,m})\|_{C^{\rho_n,\psi}}\\
&+\|(F_m P_{1,m})*\cdots*(F_m P_{n,m})-(F_m g_1)*\cdots*(F_m g_n)\|_{C^{\rho_n,\psi}}.
\end{align*}
Arguing as before (\cf. \eqref{eq:g-approx}), we have for sufficiently large $m$
$$\|P_{j,m}-g_j\|_{C^L}\le C m^{-(k+2)L}\le 1$$
and
$$\|F_m(P_{m,j}-g_j)\|_{C^L}\le Cm^{kd-L}\le 1.$$
Using the continuous embedding $C^L \rightsquigarrow C^{\rho_n,\psi}$ we get therefore, for sufficiently large $m$,
\begin{align*}
 \|g_1*\cdots*g_n-P_{1,m}*\cdots*P_{n,m}\|_{C^{\rho_n,\psi}}\\
\le C m^{-(k+2)L} n \prod_{j=1}^n (1+\|g_j\|_\infty)\le m^{-1}
\end{align*}
and
\begin{align*}
&|(F_m P_{1,m})*\cdots*(F_m P_{n,m})-(F_m g_1)*\cdots*(F_m g_n)\|_{C^{\rho_n,\psi}}\\
&\le C m^{nkd-L} n \prod_{j=1}^n (1+\|g_j\|_\infty) \le m^{-1}.
\end{align*}

On the other hand, using Lemma \ref{lemma:periodprop}, (ii),
again we have
$$(F_m P_{1,m})*\cdots*(F_m P_{n,m})=(F_m)^{*n}(P_{1,m}*\cdots*P_{n,m}).$$
Thus, by \eqref{eq:bdd-convol-F}
\begin{align*}
&\|P_{1,m}*\cdots*P_{n,m}-(F_m P_{1,m})*\cdots*(F_m P_{n,m})\|_{C^{\rho_n,\psi}}\\
&= C \|(1-F_m^{*n})(P_{1,m}*\cdots*P_{n,m})\|_{C^{\rho_n,\psi}}\\
&\lc \|1-F_m^{*n}\|_{C^{\rho_n,\psi}}\|P_{1,m}*\cdots*P_{n,m}\|_{C^{\rho_n,\psi}}\\
&\lc\eta\|P_{1,m}*\cdots*P_{n,m}\|_{C^{L}}\lc  \eta(1+\|g_1*\cdots*g_n\|_{C^{L}})\lc \eta
\end{align*}
provided that $m$ is sufficiently large.

Combining the above estimates, we get
$$\|g_1*\cdots*g_n-(F_m g_1)*\cdots*(F_m g_n)\|_{C^{\rho_n,\psi}}\lc  m^{-1}+\eta.$$
This guarantees \eqref{eq:convol-approx} if $\eta$ is chosen sufficiently small and $m$ is chosen sufficiently large.
This completes the proof of Lemma \ref{lemma:approx}.
\end{proof}

\subsection{\it Conclusion of the proof of
Theorem \ref{intro:theorem-C} } The result is about measures on $\bbR^d$ rather than $\bbT^d$.
We use that every measure on $\bbT^d$  which is supported on a cube of sidelength $<1$ can be identified with a measure that is supported on a cube of diameter $<1$
in $\bbR^d$. We take a measure  $\mu$ as in Corollary \ref{quasi-all}. After multiplying  it with a suitable $C^\infty_c$ function we may assume that it is supported  on a cube of diameter $<1$. For each $n$ we may decompose $\mu$ using the partition of unity \eqref{eq:partofunity}. The regularity properties (iii) and (iv) in Theorem \ref{intro:theorem-C} follow immediately from \eqref{bounded-convolution} and \eqref{bounded-convolution-low}.  The compact support of $\mu$ and the
 decay property \eqref{FTd-decay}  on  $ \bbZ^d$ imply the decay property in (ii).
 This is a standard argument (see e.g. \cite{kahane}, p.252, with slightly different notation). \qed

\subsection{\it Optimality of H\"{o}lder continuity}
\label{optimalityHoelder}
Following the argument in \cite{koerner2008}, we show that the H\"{o}lder continuity obtained in Theorem \ref{intro:theorem-C} is best possible.

\begin{proposition}\label{prop:lipschitz-best}
Let $\mu$ be a Borel probability measure on $\mathbb R^d$ supported on a compact set of Hausdorff dimension $0<\alpha<d$. Suppose $\mu^{*n}\in C^{\lambda}(\mathbb R^d)$
where $n\in\mathbb N, n\ge 2$ and $0\le\lambda<\infty$. Then $\lambda\le \frac{n\alpha-d}{2}.$
\end{proposition}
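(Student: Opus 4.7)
The plan is to translate the regularity of $\mu^{*n}$ into an upper bound for the $s$-energy
$$
I_s(\mu) \,=\, \iint |x-y|^{-s}\,d\mu(x)\,d\mu(y) \,=\, c_{s,d}\int |\widehat\mu(\xi)|^2|\xi|^{s-d}\,d\xi,
$$
and then invoke Frostman's lemma (see e.g.\ \cite[\S8]{wolff}): if $I_s(\mu)<\infty$ for a nontrivial positive Borel measure, then $\dim_{\rm H}(\mathrm{supp}\,\mu)\ge s$. The target is to show $I_s(\mu)<\infty$ for every $s<(d+2\lambda)/n$, which will force $\alpha\ge (d+2\lambda)/n$, equivalent to $\lambda\le (n\alpha-d)/2$.

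The first step is to deduce from $\mu^{*n}\in C^{\lambda}(\mathbb R^d)$ with compact support that $\mu^{*n}\in H^\sigma(\mathbb R^d)$ for every $\sigma<\lambda$. When $0<\sigma<\min(\lambda,1)$ this is the Sobolev--Slobodeckij characterisation: the double integral $\iint|\mu^{*n}(x)-\mu^{*n}(y)|^2|x-y|^{-d-2\sigma}\,dx\,dy$ converges by the H\"older bound on $\{|x-y|\le 1\}$ and by compact support together with boundedness on $\{|x-y|>1\}$. For $\lambda\ge 1$ the same argument is applied to partial derivatives of order $\lfloor\lambda\rfloor$. On the Fourier side this reads
$$
\int |\widehat\mu(\xi)|^{2n}(1+|\xi|)^{2\sigma}\,d\xi<\infty \qquad (\sigma<\lambda).
$$

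The second step is a single application of H\"older's inequality on high frequencies. Writing
$$
|\widehat\mu|^2|\xi|^{s-d} \,=\, \bigl(|\widehat\mu|^{2n}|\xi|^{2\sigma}\bigr)^{1/n}\cdot|\xi|^{(n(s-d)-2\sigma)/n}
$$
and applying H\"older with conjugate exponents $n$ and $n/(n-1)$,
$$
\int_{|\xi|>1}|\widehat\mu|^2|\xi|^{s-d}\,d\xi \,\le\, \Bigl(\int_{|\xi|>1}|\widehat\mu|^{2n}|\xi|^{2\sigma}\,d\xi\Bigr)^{\!1/n}\Bigl(\int_{|\xi|>1}|\xi|^{\frac{n(s-d)-2\sigma}{n-1}}\,d\xi\Bigr)^{\!(n-1)/n}.
$$
The second factor is finite exactly when $s<(d+2\sigma)/n$, while the low-frequency part $\int_{|\xi|\le 1}|\widehat\mu|^2|\xi|^{s-d}\,d\xi$ is finite for every $s>0$ since $|\widehat\mu|\le 1$. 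Letting $\sigma\nearrow\lambda$ then yields $I_s(\mu)<\infty$ for all $0<s<(d+2\lambda)/n$, as desired.

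The main obstacle is Step 1---the Sobolev embedding for compactly supported H\"older functions---where all of the regularity hypothesis is consumed. The choice of conjugate exponents $(n,n/(n-1))$ in Step 2 is then forced by the need to match the $|\widehat\mu|^{2n}$ factor inherited from Sobolev regularity; any other split of $|\widehat\mu|^2$ yields a strictly weaker conclusion.
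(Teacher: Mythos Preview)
Your argument is correct and follows essentially the same route as the paper: translate $\mu^{*n}\in C^\lambda$ into Fourier decay of $|\widehat\mu|^{2n}$, apply H\"older with exponents $(n,\tfrac{n}{n-1})$ to control the energy integral, and conclude via the energy characterisation of Hausdorff dimension. The only cosmetic difference is that the paper uses the Besov embedding $C^\lambda\hookrightarrow B^2_{\lambda,\infty}$ to get the dyadic bound $\int_{|\xi|\approx R}|\widehat\mu|^{2n}\,d\xi\lesssim R^{-2\lambda}$ and then sums over shells, whereas you pass through $H^\sigma$ for $\sigma<\lambda$ and apply H\"older globally; both variants yield $I_s(\mu)<\infty$ for $s<(d+2\lambda)/n$.
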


\begin{proof}
Define by $\cE_\ga(\mu)
=\iint|x-y|^{-\ga}d\mu(x)d\mu(y)=
c\int|\widehat \mu(\xi)|^2|\xi|^{\gamma-d} d\xi$ the $\gamma$-dimensional energy of $\mu$. Recall
from \cite[p.62]{wolff} that  the Hausdorff dimension of $E$  is equal to
the supremum over all $\ga$ for which there is a probability measure $\nu$ supported on $E$ with
$\cE_\ga(\nu)<\infty$. Thus it suffices to show that
$\cE_\ga(\mu)$ is finite  for
$\gamma<(d+2\lambda)/n$.

Since $\mu^{*n}$ is compactly supported it also belongs to the Besov-space
$B^2_{\la,\infty}$ and thus, by Plancherel,  we have, for $R>1$,
$$\int_{|\xi|\approx R}|\hat \mu(\xi)|^{2n} d\xi \lc R^{-2\la}$$
Now let
$0<\gamma<d$. By  H\"{o}lder's inequality,
$$
\int_{|\xi|\approx R}\frac{|\widehat \mu(\xi)|^2}{|\xi|^{d-\gamma}} d\xi
\lesssim R^{\ga-d}\Big(\int_{A_R}|\hat \mu(\xi)|^{2n} d\xi\Big)^{1/n}R^{d(1-1/n)}
\lesssim R^{\ga-d/n-2\lambda/n}.
$$
Letting  $R=2^j$, $j=0,1,\cdots$, we see that
$\cE_\gamma(\mu)$
is finite
if $\gamma<(d+2\lambda)/n$  and the proof is complete.
\end{proof}

\section{Random sparse subsets}\label{sec:point-masses}
The purpose of this section is to establish a more quantitative version of
Proposition \ref{prop:point-masses}.

\subsection{\it Assumptions and Notations:} In  this chapter
 $x_1, x_2,\dots$  will be  independent random variables uniformly distributed
 on $\Gamma_N^d$.  That is,  for  any $m\in \bbN$ and subsets $A_1, \dots, A_m$  of $\Gamma_N^d$
the probability of the event that
$x_\nu\in A_\nu$ for $\nu=1,\dots,m$ is equal to
$N^{-dm} \prod_{\nu=1}^m\card (A_\nu\cap\Gamma_N^d)$.
We  denote by $\sF_0$ the trivial $\sigma$-algebra and by
 $\sF_j$ the $\sigma$-algebra generated by the (inverse images) of the random variables $x_1,\dots, x_j$.


Given random Dirac masses $\delta_{x_\nu}$, $\nu=1,\dots,m$  we define the random measures $\mu_m$ and $\sigma_m$ by $\sigma_0=\mu_0=0$,
$$\sigma_m =\sum_{\nu=1}^m \delta_{x_\nu}, \qquad \mu_m = m^{-1}\sigma_m, \quad
\text{  $m=1,2,\dots$.}
$$

\subsection{\it A Fourier decay estimate}
The  Fourier transform $\widehat \mu$ is defined on $\bbZ_N^d$ or,  after scaling,  on $\Gamma_N^d$ and we have
$$\widehat \mu_m(Nu) =\frac{1}{m} \sum_{j=1}^m e^{-2\pi i N\inn{u}{x_j}}, \quad u\in \Gamma_N^d.$$

\begin{lemma}\label{fourier-decay-discrete}
Let $h\ge 1$.
The event
\begin{equation}\label{fourier-decay-discrete-equation}
\Big\{\max_{u\in \Ga_N^d\backslash\{0\}}|\widehat\mu_m(Nu)|\le \frac{4\log^{1/2}(8N^{d+h})}{m^{1/2}}\Big\}
\end{equation}
has probability at least $1-N^{-h}$.
\end{lemma}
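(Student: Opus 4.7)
The plan is to view this as a standard large-deviation plus union bound estimate. For fixed $u \in \Gamma_N^d\setminus\{0\}$, set
\[
Z_j^{(u)} = e^{-2\pi i N \langle u, x_j\rangle}, \qquad j=1,\dots,m,
\]
so that $\widehat\mu_m(Nu) = m^{-1}\sum_{j=1}^m Z_j^{(u)}$. The $Z_j^{(u)}$ are i.i.d., bounded in modulus by $1$. I would first verify that $\bbE[Z_j^{(u)}]=0$: writing $u=\ell/N$ and $x_j=k_j/N$ with $\ell\in\{0,\dots,N-1\}^d\setminus\{0\}$ (mod $N$) and $k_j$ uniform on $\{0,\dots,N-1\}^d$, one has $\bbE[Z_j^{(u)}] = \prod_{i=1}^d N^{-1}\sum_{k=0}^{N-1} e^{-2\pi i \ell_i k/N}$, and at least one factor vanishes since some $\ell_i\not\equiv 0 \pmod N$.

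Next, splitting $Z_j^{(u)} = X_j + iY_j$ into real and imaginary parts gives two sequences of real-valued, i.i.d., mean-zero random variables bounded by $1$ in absolute value. Hoeffding's inequality then yields, for any $s>0$,
\[
\Pr\Bigl(\bigl|m^{-1}\textstyle\sum_j X_j\bigr|\ge s\Bigr) \le 2 e^{-ms^2/2}
\]
and the analogous bound for $Y_j$. Since $|\widehat\mu_m(Nu)|\ge t$ forces one of the real or imaginary parts to exceed $t/\sqrt 2$, a union bound gives
\[
\Pr\bigl(|\widehat\mu_m(Nu)|\ge t\bigr)\le 4\,e^{-mt^2/4}.
\]

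The final step is to choose $t$ so this is small enough to survive a union bound over $\Gamma_N^d\setminus\{0\}$, which has cardinality at most $N^d$. Taking $t = 4m^{-1/2}\log^{1/2}(8N^{d+h})$ makes $mt^2/4 = 4\log(8N^{d+h})$, so each individual probability is bounded by $4(8N^{d+h})^{-4}$. Applying the union bound,
\[
\Pr\Bigl(\max_{u\in\Gamma_N^d\setminus\{0\}}|\widehat\mu_m(Nu)|>t\Bigr)\le 4N^d(8N^{d+h})^{-4} = \frac{1}{1024}\,N^{-3d-4h},
\]
which is comfortably below $N^{-h}$ for $h\ge 1$, $d\ge 1$, $N\ge 1$. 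This proves the claim.

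There is no real obstacle here: the argument is essentially a textbook application of Hoeffding plus a union bound. The only items needing slight care are (a) the computation checking that $\bbE[Z_j^{(u)}]=0$ for every nonzero $u\in\Gamma_N^d$ (which uses the orthogonality of characters on $\bbZ_N$), and (b) the book-keeping with the factor of $\sqrt 2$ when passing from real/imaginary parts to the modulus. Both are routine.
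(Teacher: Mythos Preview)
Your proof is correct and follows essentially the same approach as the paper: both obtain the single-frequency bound $\mathbb P(|\widehat\mu_m(Nu)|\ge t)\le 4e^{-mt^2/4}$ (the paper by invoking its appendix Bernstein inequality, you by explicitly splitting into real and imaginary parts and applying Hoeffding, which is exactly how that corollary is proved), and then finish with a union bound over $u\in\Gamma_N^d\setminus\{0\}$. Your explicit verification that $\mathbb E[Z_j^{(u)}]=0$ via orthogonality of characters is a welcome detail the paper leaves implicit.
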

\begin{proof}
The proof is essentially the same as in the classical paper by Erd\H os and R\'{e}nyi \cite{erdos-renyi}.  Fix $u\in \Gamma_N^d\backslash\{0\}$, and consider the random variables
$X_\nu=e^{-2\pi i N\inn{u}{ x_\nu}}.$
Then $X_\nu$, $\nu=1,\dots,m$ are independent with $|X_\nu|\le 1$ and $\mathbb EX_j=0$. Thus by Bernstein's inequality (see e.g. Corollary \ref{lemma:bernstein}), for all $t>0$
$$\mathbb P(|\widehat\mu(Nu)|\ge t)\le 4e^{-mt^2/4}.$$
Setting $t=2m^{-1/2}\log^{1/2}(4N^{d+h})$
we get
$\mathbb P\{|\widehat\mu(Nu)|\ge t\}\le N^{-d-h}$.
Allowing $u \in \Gamma_N^d$ to vary, we see that
$\mathbb P\big\{\text{\eqref{fourier-decay-discrete-equation} fails}\big\}\le N^{-h}$.
\end{proof}

\subsection{\it Regularity of self convolutions}
We begin with a few elementary observations. Let
\Be\label{eq:Deljdef}
\Delta_{j,\ell}= \sigma_j^{*\ell}-\sigma_{j-1}^{*\ell}\,.
\Ee so that
\Be \label{eq:telescope} \sigma_m^{*\ell}=\sum_{j=1}^m \Delta_{j,\ell}\,.\Ee

\begin{lemma}\label{lemma:elem}
(i) For $j\ge 1$,
$\Delta_{j,\ell}$
 is a positive measure, and we have, for $\ell\ge 2$,
\begin{subequations}
\begin{align}
\Delta_{j,\ell}
&= \delta_{\ell x_j}+
\sum_{k=1}^{\ell-1}\binom{\ell}{k}\delta_{(\ell-k)x_j}*\sigma_{j-1}^{*k}\,
 \label{binomial-formula-1}\\
&=\delta_{\ell x_j}+
\sum_{k=1}^{\ell-1}\binom{\ell}{k}\sum_{1\le \nu_1,\dots,\nu_k\le j-1}
\delta_{(\ell-k)x_j+x_{\nu_1}+\dots+x_{\nu_k}}
 \,. \label{binomial-formula-2}
\end{align}
\end{subequations}

(ii) Assume that $\gcd (\ell!, N)=1$.
Let $m\ge 2$ and let $Q$ be a cube of  sidelength $\ge N^{-1}$.
Then for $j_1<\dots<j_K$
$$\bbP\big\{\Delta_{j_1,\ell}(Q)\neq 0, \dots, \Delta_{j_K,\ell}(Q)\neq 0\big\}\le
(2^{d+1}|Q| m^{\ell-1})^K.$$
In particular, for each $u\in \Gamma_N^d$
$$\bbP\big\{\Delta_{j_1,\ell}(\{u\})\neq 0, \dots, \Delta_{j_K,\ell}(\{u\})\neq 0\big\}\le
(2 N^{-d} m^{\ell-1})^K.$$
(iii)
 Assume that  $\gcd (\ell!, N)=1$.
For $j=0,\dots, m-1$ let $\cE_j$ be a given event in $\sF_j$. Let
\Be\label{eq:Yjdef}
Y_j=\begin{cases}
\sigma_j^{*\ell}-\sigma_{j-1}^{*\ell}- N^{-d}(j^\ell-(j-1)^\ell)&\text{ on }
\cE_{j-1},
\\0 &\text{ on }  \cE_{j-1}^\complement.
\end{cases}
\Ee
Then $\mathbb E\big[Y_j|\mathscr F_{j-1}\big]=0$. Let $W_0=0$ and $W_j=\sum_{\nu=1}^j  Y_\nu$, for $j=1,\dots, m$. Then $\{W_j\}_{j=0}^m$ is a martingale adapted to the filtration $\{\sF_j\}_{j=0}^m$.
\end{lemma}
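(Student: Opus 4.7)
Part (i) is an exercise in the binomial theorem. Writing $\sigma_j=\sigma_{j-1}+\delta_{x_j}$ and expanding
$$\sigma_j^{*\ell}=(\sigma_{j-1}+\delta_{x_j})^{*\ell}=\sum_{k=0}^{\ell}\binom{\ell}{k}\delta_{(\ell-k)x_j}*\sigma_{j-1}^{*k},$$
the $k=\ell$ term equals $\sigma_{j-1}^{*\ell}$ and is removed when forming $\Delta_{j,\ell}$, while the $k=0$ term is $\delta_{\ell x_j}$. This yields \eqref{binomial-formula-1}. Expanding $\sigma_{j-1}^{*k}=\sum_{\nu_1,\dots,\nu_k\le j-1}\delta_{x_{\nu_1}+\cdots+x_{\nu_k}}$ gives \eqref{binomial-formula-2}, and positivity of $\Delta_{j,\ell}$ is immediate from this last display.

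For part (ii), the point is that on $\sF_{j-1}$ the locations $x_1,\dots,x_{j-1}$ are frozen, so by \eqref{binomial-formula-2} the event $\{\Delta_{j,\ell}(Q)\neq 0\}$ corresponds to $x_j$ lying in the explicit bad set
$$B_j=\bigcup_{k=0}^{\ell-1}\bigcup_{\nu_1,\dots,\nu_k\le j-1}(\ell-k)^{-1}\bigl(Q-x_{\nu_1}-\cdots-x_{\nu_k}\bigr),$$
where the inverse is computed in $\Gamma_N^d$. The assumption $\gcd(\ell!,N)=1$ ensures that multiplication by any $\ell-k\in\{1,\dots,\ell\}$ is a bijection of $\Gamma_N^d$, so each preimage contains exactly $|Q\cap\Gamma_N^d|\le 2^dN^d|Q|$ points when $Q$ is a cube of sidelength $\ge N^{-1}$ (and exactly one point when $Q$ is a singleton). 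The number of $(k,\nu_1,\dots,\nu_k)$ tuples is $\sum_{k=0}^{\ell-1}(j-1)^k\le 2m^{\ell-1}$ by the geometric series (using $m\ge 2$ and $j\le m$). Hence $|B_j|\le 2^{d+1}|Q|N^dm^{\ell-1}$ and $\bbP(\Delta_{j,\ell}(Q)\neq 0\mid\sF_{j-1})\le 2^{d+1}|Q|m^{\ell-1}$. The $K$-fold bound follows by iteratively conditioning on $\sF_{j_i-1}$ for $i=K,K-1,\dots,1$; the singleton version is identical with $|Q\cap\Gamma_N^d|$ replaced by $1$.

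Part (iii) is what I expect to be the main obstacle, because it requires identifying the correct predictable compensator $N^{-d}(j^\ell-(j-1)^\ell)$, and then interpreting the martingale statement (which is naturally read pointwise: for each $u\in\Gamma_N^d$, $W_j(\{u\})$ is a scalar martingale adapted to $\sF_j$). On $\cE_{j-1}^\complement$ we have $Y_j=0$, and $\cE_{j-1}\in\sF_{j-1}$, so the issue is to verify $\bbE[\Delta_{j,\ell}(\{u\})\mid\sF_{j-1}]=N^{-d}(j^\ell-(j-1)^\ell)$. Using \eqref{binomial-formula-1} term by term: $\bbE[\delta_{\ell x_j}(\{u\})]=N^{-d}$ since $\ell x_j$ is uniform on $\Gamma_N^d$ (here again $\gcd(\ell,N)=1$), and for $1\le k\le \ell-1$,
$$\bbE\bigl[\delta_{(\ell-k)x_j}*\sigma_{j-1}^{*k}(\{u\})\bigm|\sF_{j-1}\bigr]
=\bbE_{x_j}\bigl[\sigma_{j-1}^{*k}(\{u-(\ell-k)x_j\})\bigr]
=\frac{\sigma_{j-1}^{*k}(\Gamma_N^d)}{N^d}=\frac{(j-1)^k}{N^d},$$
since $(\ell-k)x_j$ is uniform on $\Gamma_N^d$. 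Summing with the binomial weights $\binom{\ell}{k}$ and recognizing
$$\sum_{k=0}^{\ell-1}\binom{\ell}{k}(j-1)^k=j^\ell-(j-1)^\ell$$
gives the desired expectation, hence $\bbE[Y_j\mid\sF_{j-1}]=0$. The martingale property of $\{W_j\}$ then follows at once, since each $Y_\nu$ is $\sF_\nu$-measurable and has zero conditional mean given $\sF_{\nu-1}$.
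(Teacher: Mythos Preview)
Your proof is correct and follows essentially the same approach as the paper: the binomial expansion for (i), the union bound over the terms in \eqref{binomial-formula-2} combined with uniformity of $(\ell-k)x_j$ for (ii), and the term-by-term conditional expectation computation yielding $N^{-d}\sum_{k=0}^{\ell-1}\binom{\ell}{k}(j-1)^k=N^{-d}(j^\ell-(j-1)^\ell)$ for (iii). Your version of (ii) is in fact slightly more explicit than the paper's, which simply asserts the $K$-fold bound after establishing the single-$j$ conditional estimate; your iterated conditioning on $\sF_{j_K-1},\sF_{j_{K-1}-1},\dots$ makes this step transparent.
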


\begin{proof} Part (i) follows immediately from the binomial formula.
 For part (ii)
 note that by the assumption $\gcd (\ell!, N)=1$ the random variables
$(\ell-k)x_j$, $1\le k\le  \ell$, are uniformly distributed.
Observe that for any fixed $a$ the probability of the event
$\{(\ell-k)x_d-a\in Q\}$  is at most $2^d|Q|$.
Thus the probability of the event that
$(\ell-k)x_d-a\in Q$  for some choice of $a=x_{\nu_1}+\dots+x_{\nu_k}$,
$1\le \nu_1,\dots,\nu_k\le j-1$,
 does not exceed
$2^{d}|Q| (j-1)^{k-1}.$  Hence
$\bbP\big\{\Delta_{j,\ell}(Q)\neq 0\} \le
 2^d|Q|\sum_{\ka=0}^{\ell-1}{m^\ka}\le
 2^{d+1}|Q|m^{\ell-1}\,.$ Now the assertion in  part (ii) follows.
The second assertion in (ii) is proved similarly.

For (iii), clearly $\{W_j\}_{j=0}^m$ is  adapted to the filtration $\{\sF_j\}_{j=0}^m$.
By assumption the random variable $qx_j$ is uniformly distributed on $\Gamma_N^d$, for $1\le q\le \ell$.
Given fixed  $x_1,\cdots,x_{j-1}$,
then by \eqref{binomial-formula-2}
\begin{align*}
&\mathbb E\big[\sigma_j^{*\ell}(\{u\})-\sigma_{j-1}^{*\ell}(\{u\})|x_1,\cdots,x_{j-1}\big] \\
&=N^{-d}\sum_{q=0}^{\ell-1} \binom{\ell}{q}(j-1)^q=N^{-d}(j^\ell-(j-1)^\ell).
\end{align*}
Since $\cE_{j-1}\in \sF_{j-1}$ we get
 $\mathbb E\big[Y_j\bbone_{\cE_{j-1}}|\sF_{j-1}\big]=0$ in this case. On $\cE_{j-1}^\complement$ we have  $Y_j=0$, which also implies $\mathbb E\big[Y_j \bbone_{E_{j-1}^\complement}|\sF_{j-1}\big]=0$. Hence $\mathbb E\big[Y_j|\mathscr F_{j-1}\big]=0$ and this shows $\{W_j\}_{j=0}^m$ is a martingale.
\end{proof}

We shall use (a small variant of) an elementary inequality from K\"orner's paper
(\cite[Lemma~11]{koerner2008})
which is  useful for the estimation of sums of
independent Bernoulli  variables.
\begin{lemma}[\cite{koerner2008}]\label{small-summation}
Let $0<p<1$, $m\ge 2$ and $2mp\le M\le m$. Then
$$\sum_{k=M}^m\binom{m}{k} p^k \le
\frac{2(mp)^M}{M!}.$$
In particular, if
 $mp\le 1$ and if $Y_1,\cdots,Y_m$ are independent random variables with
$\mathbb P\{Y_j=1\}=p$, $\mathbb P\{Y_j=0\}=1-p$ then
$\mathbb P\{\sum_{j=1}^m Y_j\ge M\}\le \frac{2(mp)^M}{M!}.$
\end{lemma}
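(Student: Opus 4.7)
The plan is to reduce to a geometric-series estimate. The only ingredient needed is the elementary binomial bound
$$\binom{m}{k}\le \frac{m^k}{k!},$$
which gives immediately
$$\sum_{k=M}^m\binom{m}{k}p^k\le \sum_{k=M}^m\frac{(mp)^k}{k!}.$$

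Next I would exploit the hypothesis $2mp\le M$ to show the tail on the right is dominated by its first term. Indeed, for every $k\ge M$ the ratio of consecutive terms satisfies
$$\frac{(mp)^{k+1}/(k+1)!}{(mp)^k/k!}=\frac{mp}{k+1}\le \frac{mp}{M+1}\le \frac{1}{2},$$
since $mp\le M/2$. Summing the resulting geometric series yields
$$\sum_{k=M}^m\frac{(mp)^k}{k!}\le \frac{(mp)^M}{M!}\sum_{j=0}^\infty 2^{-j}=\frac{2(mp)^M}{M!},$$
which is the first assertion.

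For the second (Bernoulli) statement, write
$$\mathbb P\Big\{\sum_{j=1}^m Y_j\ge M\Big\}=\sum_{k=M}^m\binom{m}{k}p^k(1-p)^{m-k}\le \sum_{k=M}^m\binom{m}{k}p^k,$$
and apply the already-proved inequality. The only thing to verify is that the hypothesis $2mp\le M$ of the first part is in force: if $mp\le 1$ and $M\ge 2$ this is automatic, and the cases $M=0,1$ are trivial (for $M=0$ the probability is at most $1=2(mp)^0/0!$, and for $M=1$ a union bound gives $\mathbb P\{\sum Y_j\ge 1\}\le mp=(mp)^1/1!$). There is no genuine obstacle here; the proof is a one-line geometric-series argument, and the only point requiring a moment of care is the boundary case $M\le 2mp$ in the ``in particular'' clause, which is handled separately as above.
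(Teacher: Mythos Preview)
Your argument is correct and is essentially the same as the paper's: both reduce to a geometric series by showing the ratio of consecutive terms is at most $1/2$. The only cosmetic difference is that the paper works directly with $u_k=\binom{m}{k}p^k$ and computes the exact ratio $u_{k+1}/u_k=\frac{m-k}{k+1}\,p\le\frac{mp}{k+1}\le\frac12$, whereas you first pass through the bound $\binom{m}{k}\le m^k/k!$ before doing the same ratio estimate; the separate treatment of $M\le 1$ in the Bernoulli clause is unnecessary since the hypothesis $2mp\le M$ is inherited from the first part, but it does no harm.
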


\begin{proof}
Set $u_k=\binom{m}{k}p^k,$
then  $u_{k+1}/u_k=\frac{m-k}{k+1}p\le\frac{mp}{k+1}\le\frac{1}{2}$ for $k\ge M$ and thus
the sum is estimated by
$\sum_{k\ge M}u_k \le 2u_M\le \frac{2(mp)^M}{M!}.$
The second assertion follows since
$\mathbb P\{\sum_{j=1}^m Y_j\ge M\}=\sum_{k=M}^m\mathbb P\{\sum_{j=1}^m Y_j=k\}\le\sum_{k=M}^m u_k.$
\end{proof}

For $\ell=0,1,2,\dots$, $0<\eps<d$, and $h\in \bbN$ define  recursively positive numbers
$M(\ell, \eps, h)$ by
\begin{subequations}\label{Mconstantsdef}
\Be\label{eq:Mlepsh}
\begin{aligned}
M(0, \eps, h)&=1\\
M(\ell, \eps, h) &= U(\eps,h)\ka(\ell,h), \quad \ell\ge 1,
\end{aligned} \Ee
where
\begin{align}
\label{eq:Uepsldef}
U(\eps,h)&:=\max \{\lfloor e^{d+2} \rfloor, \lceil \eps^{-1}(2d+h+1)\rceil \},
\\
\label{eq:kappadef}\ka(\ell, h)&:=
\sum_{q=0}^{\ell-1}\binom {\ell}{q} M(q,
d(1- q/\ell), h+1)\,.
\end{align}
\end{subequations}
The growth of these constants as functions of $\ell$ and $h$ is irrelevant for our purposes. For the sake of  completeness we give an upper bound.
\begin{lemma}\label{le:growth}
Let $\ell \in \bbN\cup \{0\}$, $0<\eps<d$, and $h\in \bbN$.
The numbers  defined in  \eqref{Mconstantsdef} satisfy
$$M(\ell,\eps, h)\le \eps^{-1}(e^{d+3}\ell^2 (h+\ell))^\ell.$$
\end{lemma}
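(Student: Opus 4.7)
The plan is to prove the bound by induction on $\ell$. The cases $\ell = 0$ (where $M(0,\eps,h) = 1$, using the convention $0^0 = 1$) and $\ell = 1$ (where $M(1,\eps,h) = U(\eps,h)\kappa(1,h) = U(\eps,h)$ since $\kappa(1,h) = M(0,d,h+1) = 1$) can be verified directly from the definitions; for the latter, one absorbs the $e^{d+2}$ term of $U$ using $\eps < d$, i.e.\ $\eps^{-1} > 1/d$, to obtain $U(\eps,h) \leq \eps^{-1}(de^{d+2} + 3d + h + 1)$.

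For the inductive step at $\ell \geq 2$, I would combine the recursion $M(\ell,\eps,h) = U(\eps,h)\kappa(\ell,h)$ with the inductive hypothesis applied to each $M(q, d(1-q/\ell), h+1)$ for $1 \leq q \leq \ell-1$, while the $q = 0$ term contributes simply $M(0,d,h+1) = 1$. Using $(d(1-q/\ell))^{-1} = \ell/(d(\ell-q))$ and the crude bounds $q^2 \leq \ell^2$, $h+1+q \leq h+\ell$, and writing $A := e^{d+3}\ell^2(h+\ell)$, the inductive hypothesis gives $M(q, d(1-q/\ell), h+1) \leq \ell A^q/(d(\ell-q))$. Denoting by $T_q := \binom{\ell}{q}\ell A^q/(d(\ell-q))$ the $q$-th summand of $\kappa$, a direct computation yields
\[
\frac{T_{q+1}}{T_q} = \frac{(\ell-q)^2 A}{(q+1)(\ell-q-1)} \;\geq\; \frac{(\ell-q)A}{q+1} \;\geq\; \frac{2A}{\ell-1} \;\geq\; 2,
\]
so the $T_q$ form a geometric progression whose total is controlled by twice its largest term $T_{\ell-1} = \ell^2 A^{\ell-1}/d$. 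Hence $\kappa(\ell,h) \leq 3\ell^2 A^{\ell-1}/d$, and multiplying by $U(\eps,h) \leq \eps^{-1}(de^{d+2}+3d+h+1)$ and comparing with the target $\eps^{-1}A^\ell = \eps^{-1}\cdot e^{d+3}\ell^2(h+\ell)\cdot A^{\ell-1}$ reduces matters to the numerical inequality $3(de^{d+2}+3d+h+1) \leq de^{d+3}(h+\ell)$, which holds with room to spare for $\ell \geq 2$, $h \geq 0$.

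The main technical obstacle is the constant bookkeeping: the base $e^{d+3}$ in the claimed bound exceeds the $e^{d+2}$ hidden inside $U$ only by a factor of $e$, so the argument leans on the polynomial slack $\ell^2(h+\ell)$, raised to the $\ell$-th power in the target, to absorb both the $O(\ell^2/d)$ overhead from the summation and the $O(de^{d+2})$ contribution from $U$. Since the authors only need qualitative finiteness of these constants (as noted in the remark preceding the lemma), any loosening of the exponential or polynomial factors in the bound would be inessential for the applications.
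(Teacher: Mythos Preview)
Your overall strategy---induction on $\ell$, bounding $M(q,d(1-q/\ell),h+1)$ via the inductive hypothesis, then summing---is exactly the paper's. The only real difference is in how you control $\kappa(\ell,h)$: where the paper rewrites $\binom{\ell}{q}=\frac{\ell}{\ell-q}\binom{\ell-1}{q}$, pulls out a factor $\ell^2$, and applies the binomial theorem to get $\kappa(\ell,h)\le \ell^2(A'+1)^{\ell-1}\le e^{1/2}\ell^2 A^{\ell-1}$ (with $A'=e^{d+3}(\ell-1)^2(h+\ell)$), you instead observe that the summands $T_q$ grow at least geometrically and bound the sum by $2T_{\ell-1}$. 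Both routes are valid for the inductive step $\ell\ge 2$ and yield essentially the same final comparison.

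There is, however, a gap in your $\ell=1$ base case. Your bound $U(\eps,h)\le\eps^{-1}(de^{d+2}+3d+h+1)$ is correct, but to reach the target $\eps^{-1}e^{d+3}(h+1)$ you would need $de^{d+2}+3d+h+1\le e^{d+3}(h+1)$, and this fails once $d>e(h+1)$ (take $d=10$, $h=1$: the left side exceeds $1.6\times 10^6$, the right is about $8.8\times10^5$). In fact the lemma as stated is literally false in this regime: for $\ell=1$, $d=10$, $h=1$, $\eps=6$ one has $M(1,\eps,h)=\lfloor e^{12}\rfloor\approx 162754$ while $\eps^{-1}e^{13}(h+1)=e^{13}/3\approx 147471$. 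The paper's own proof carries the same defect---its claimed bound $U(\eps,h)\le e^{d+2}h\eps^{-1}$ also fails when $\eps>h$---so this is an oversight in the statement rather than in your strategy. As you correctly note in your last paragraph, the precise form of the constant is immaterial for the applications; a slightly larger base (say $e^{d+4}$ or adding a harmless factor of $d$) would repair both arguments.
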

\begin{proof} We argue  by induction, with the case $\ell=0$ being trivial.
For the induction step we use
$\binom {\ell}{q}= \frac{\ell}{\ell-q}\binom{\ell-1}{q}$ and
estimate\begin{align}\kappa(\ell,h)&\le 1+ \sum_{q=1}^{\ell-1}
\frac{\ell}{\ell-q} \binom {\ell-1}{q} \frac{(e^{d+3}q^2 (h+1+q))^q}{d(1-\frac q\ell)}
\notag\\&\le \ell^2 \sum_{q=0}^{\ell-1}\binom {\ell-1}{q} (e^{d+3}(\ell-1)^2 (h+\ell))^q
\label{eq:kastarline}\\&\le  \ell^2\big( e^{d+3} (\ell-1)^2(h+\ell)+1\big)^{\ell-1}
\notag\end{align}where in the last line we have used $(1+x)^{1/x}\le e^x$ for $0<x<1$. Thus
\Be\label{eq:kaest}\kappa(\ell,h)\le e^{1/2} \ell^2 (e^{d+3}\ell^2 (h+\ell))^{\ell-1}.\Ee
Now one checks that
$U(\eps, h)\le e^{d+2} h \eps^{-1}$ and \eqref{eq:kaest} yields
 for $\ell\ge 1$\begin{equation*}M(\ell, \eps, h)\le e^{d+2}h\eps^{-1} \kappa(\ell,h) \le
\eps^{-1}( e^{d+3}\ell^2(h+\ell))^{\ell}.\qedhere\end{equation*} \end{proof}

\begin{lemma}\label{le:ball}
Let $\ell \in \bbN\cup \{0\}$,
$0<\eps<d$, and $h\in \bbN$.
 Let $M(\ell, \eps, h)$ be as in \eqref{eq:Mlepsh}.
Let $N$ be an integer such that  $N> 2\ell$ and $\gcd(N,\ell!)=1$.
Let $E_m(\ell,\eps,h)$ be the event that $$\sigma_m^{*\ell}(Q)\le M(\ell,\eps,h)$$
holds for all cubes of measure at most $m^{-\ell}N^{-\eps}$, and let
$E(\ell,\eps, h)$ be the intersection of the  $E_m(\ell,\eps,h)$ where
$m\le N^{\frac{d-\eps}{\ell}}$. Then
$E(\ell,\eps, h)$
has probability at least $1-N^{-h}$.
\end{lemma}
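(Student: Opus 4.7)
The plan is to prove Lemma~\ref{le:ball} by induction on $\ell$, with $\eps\in(0,d)$ and $h\in\bbN$ treated as free parameters. The base case $\ell=0$ is immediate: $\sigma_m^{*0}=\delta_0$ has total mass $1=M(0,\eps,h)$, so every cube $Q$ deterministically satisfies $\sigma_m^{*0}(Q)\le 1$. For the inductive step at level $\ell\ge 1$, I would set $\eps_k\defeq d(1-k/\ell)$ for $1\le k\le\ell-1$ and condition on the event
\[
\cB\defeq\bigcap_{k=1}^{\ell-1} E(k,\eps_k,h+1),
\]
which by the inductive hypothesis has $\bbP(\cB^c)\le(\ell-1)N^{-h-1}\le\tfrac12 N^{-h}$ (using $N>2\ell$). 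The choice $\eps_k=d(1-k/\ell)$ is pivotal: it gives $N^{(d-\eps_k)/k}=N^{d/\ell}\ge N^{(d-\eps)/\ell}$, so every value $j-1\le m\le N^{(d-\eps)/\ell}$ that appears at level $\ell$ lies in the range for which the level-$k$ event provides a bound.

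Fix $m\le N^{(d-\eps)/\ell}$ and a cube $Q$ with $|Q|\le m^{-\ell}N^{-\eps}$. Using the telescoping $\sigma_m^{*\ell}(Q)=\sum_{j=1}^m\Delta_{j,\ell}(Q)$ together with the binomial expansion of Lemma~\ref{lemma:elem}(i),
\[
\Delta_{j,\ell}(Q)=\bbone_{\ell x_j\in Q}+\sum_{k=1}^{\ell-1}\binom{\ell}{k}\sigma_{j-1}^{*k}\bigl(Q-(\ell-k)x_j\bigr).
\]
On $\cB$, each nonzero inner term is controlled by $M(k,\eps_k,h+1)$ via the inductive hypothesis. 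Writing $\cJ=\{j\le m:\Delta_{j,\ell}(Q)\ne 0\}$, I obtain the pointwise estimate
\[
\sigma_m^{*\ell}(Q)\le|\cJ|\cdot\Bigl(1+\sum_{k=1}^{\ell-1}\binom{\ell}{k}M(k,\eps_k,h+1)\Bigr)=|\cJ|\,\kappa(\ell,h).
\]

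It remains to bound $|\cJ|$. By Lemma~\ref{lemma:elem}(ii), for any integer $K\ge 1$,
\[
\bbP(|\cJ|\ge K)\le\binom{m}{K}(2^{d+1}|Q|m^{\ell-1})^K\le\frac{(2^{d+1}|Q|m^\ell)^K}{K!}\le\frac{(2^{d+1}N^{-\eps})^K}{K!},
\]
using $|Q|m^\ell\le N^{-\eps}$. Taking $K=U(\eps,h)$, the defining inequalities $U(\eps,h)\ge\lfloor e^{d+2}\rfloor$ and $\eps\,U(\eps,h)\ge 2d+h+1$, together with $K!\ge(K/e)^K\ge e^{(d+1)K}$, produce a failure probability of at most $N^{-(2d+h+1)}$ per pair $(m,Q)$. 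Since $\sigma_m^{*\ell}$ is supported on $\Gamma_N^d$ it suffices to test only a polynomial-in-$N$ family of representative cubes (cubes with corners in a refinement of $\Gamma_N^d$ and dyadic sizes down to $N^{-d}$, with a general cube covered by $O(1)$ such representatives). A union bound over this family and over $m\le N^d$ contributes at most $N^{O(d)}$, which is absorbed by the $N^{-(2d+h+1)}$ factor, leaving a total failure probability of at most $\tfrac12 N^{-h}$. Combining with $\bbP(\cB^c)\le\tfrac12N^{-h}$, on an event of probability at least $1-N^{-h}$ one has $\sigma_m^{*\ell}(Q)\le U(\eps,h)\,\kappa(\ell,h)=M(\ell,\eps,h)$, matching the recursive definition \eqref{eq:Mlepsh}.

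The main obstacle is justifying the application of the inductive hypothesis to each term $\sigma_{j-1}^{*k}(Q-(\ell-k)x_j)$. The $m'$-range condition $j-1\le N^{(d-\eps_k)/k}$ is clean, but the cube-size hypothesis $|Q|\le(j-1)^{-k}N^{-\eps_k}$ is \emph{not} automatic: it can fail precisely when $m^{\ell-k}<N^{\eps_k-\eps}$, i.e.\ for small $m$ and $j$ near $m$, where $(j-1)^k|Q|>N^{-\eps_k}$. Handling that regime requires either invoking the inductive statement at a slightly smaller parameter $\eps'\le\eps_k$ for which the cube condition does hold (tolerable because the quantitative price is absorbed into $\kappa(\ell,h)$ through the $\binom{\ell}{k}$-weighting), or replacing the inductive bound on the offending $j$'s by the trivial $\sigma_{j-1}^{*k}(Q')\le(j-1)^k$ and checking that the small number of such $j$'s (again from Lemma~\ref{lemma:elem}(ii)) keeps the contribution within the recursive budget. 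Executing this case distinction in lockstep with the cube discretization, so that the combinatorial constants telescope exactly into the formula for $M(\ell,\eps,h)$, is the technical heart of the proof.
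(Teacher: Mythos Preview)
Your induction scheme---condition on $\cB=\bigcap_{k=1}^{\ell-1}E(k,d(1-k/\ell),h+1)$, telescope $\sigma_m^{*\ell}(Q)=\sum_j\Delta_{j,\ell}(Q)$ via Lemma~\ref{lemma:elem}(i), bound the number of nonzero increments via Lemma~\ref{lemma:elem}(ii), then union-bound over a grid of cubes with corners in $\Gamma_N^d$---is exactly the paper's argument, and your arithmetic with $U(\eps,h)$ reproduces \eqref{Emestimate} (the paper phrases the tail bound via Lemma~\ref{small-summation} rather than Stirling, but the outcome is the same).

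The obstacle you flag is real, and the paper does not resolve it either. The paper writes ``Notice that $d/\ell=(d-\eps_{q,\ell})/q$. Therefore \ldots\ $\Delta_{j,\ell}(Q)\le\kappa(\ell,h)$ holds on $F$,'' thereby checking only the range condition $j-1\le N^{(d-\eps_{q,\ell})/q}=N^{d/\ell}$ and not the cube-size hypothesis $|Q|\le(j-1)^{-q}N^{-\eps_{q,\ell}}$. Your diagnosis is correct: when $\eps<\eps_{q,\ell}$ and $m^{\ell-q}<N^{\eps_{q,\ell}-\eps}$, a cube with $|Q|$ near $m^{-\ell}N^{-\eps}$ violates that hypothesis for $j$ near $m$, and the event $F$ imposes no bound on $\sigma_{j-1}^{*q}$ at that scale. (Concretely: with $d=1$, $\ell=2$, $q=1$, $\eps$ small and $m$ moderate, nothing in $F$ prevents all of $x_1,\dots,x_{m-1}$ from lying in a single translate of $Q$, making $\Delta_{m,2}(Q)\approx 2m$, which can exceed $\kappa(2,h)$.) Your two proposed repairs do not recover the recursion \eqref{eq:Mlepsh}: lowering $\eps_k$ to some $\eps'\le\eps$ makes $\kappa$ depend on $\eps$, contrary to \eqref{eq:kappadef}; and the ``few bad $j$'' idea fails because in the regime just described the bad $j$'s can be essentially all of $\{2,\dots,m\}$. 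By contrast, in the companion Lemmas~\ref{le:ballvariant} and~\ref{le:logNvariant} the analogous cube-size verification \emph{is} carried out explicitly and goes through, so the difficulty is specific to this lemma as stated.
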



\begin{proof}
We argue again by induction on $\ell$.
 When $\ell=0$, $\sigma^{*0}=\delta_0$ and the statements clearly holds
with $M(0,\eps,h)=1$, for $\eps\ge 0$ and $h\in \bbN$.
Assume that the statements hold for $0,1,\cdots,\ell-1$; we prove that it also holds for $\ell$. Let
\Be\label{eq:cFdef}F\equiv F(\ell-1,h)= \bigcap_{q=1}^{\ell-1}
 E(q,\eps_{q,\ell},h+1),
\quad \text{ with }\eps_{q,\ell}=d(1-\frac q\ell)\,.
\Ee
By the induction hypothesis,  the event $F^\complement$ has probability at most
$\ell N^{-h-1}\le \frac 12 N^{-h}$ since we assume $N>2\ell$.
We now proceed to estimate the probability of $E(\ell,\eps,h)^\complement\cap F$.

Fix $m\le N^{\frac{d-\eps}\ell}$, and
fix a cube $Q$, with $N^{-d}\le |Q|\le N^{-\eps}m^{-\ell}$.
Notice that
$d/\ell = (d-\eps_{q,\ell})/q$.
Therefore, if $\kappa(\ell,h)$ is as in \eqref{eq:kappadef}
we see, using  \eqref{binomial-formula-1}, that
$\Delta_{j,\ell}(Q) \le \ka(\ell,h)$ holds on $ F$, for $j=1,\cdots,m.$
Now let $U\ge 2^{d+2}$ be an integer
and let  $\cA_{U,m}^Q$ be the event that
\begin{equation}\label{eq:Ukappa}
\sum_{j=1}^m \Delta_{j,\ell}(Q)\ge U \kappa(\ell,h).
\end{equation}

Now by \eqref{eq:telescope} and \eqref{eq:Mlepsh} the event $E_m(\ell, \eps,h)^\complement$
is contained in the union over the $\cA^Q_{U(\eps,h), m}$ when $Q$ ranges over the cubes with measure at most $N^{-\eps}m^{-\ell}$.
Let $\fQ$ be the collection
 of all cubes of measure  $N^{-\eps}m^{-\ell}$,
which have corners in $\Gamma_N^d$. Then $\#(\fQ)\le (2N)^d$. Notice that every cube of measure less than $N^{-\eps}m^{-\ell}$ is contained in at most $3^d$ cubes in $\fQ$.  Hence
\Be\label{eq:comparisonE-A}
\bbP( E_m(\ell,\eps,h)^\complement\cap F) \le (6N)^d
\max_{Q\in \fQ} \bbP(\cA_{U(\eps,h),m}^Q\cap F).
\Ee
Now in order  to estimate
$\bbP(\cA_{U,m}^Q\cap F) $ we observe that if \eqref{eq:Ukappa}
holds on $F$ then there are at least $U$ indices $j$ with $\Delta_{j,\ell}(Q)\neq 0$ thus we may assume $m\ge U$.
Now we see
 from  Lemma \ref{lemma:elem}, (ii),  that for $U\le k\le m$ and
 for any choice of  indices $1\le j_1<\cdots<j_{k}\le m$,
that
$$\bbP\{\Delta_{j_\nu,\ell}(Q)\neq 0,\,\nu=1,\dots, k\} \le (2^{d+1}|Q|m^{\ell-1})^k.$$
Thus
$$\mathbb P\big(\cA_{U,m}^Q\cap F \big )\le
\sum_{k=U}^{m} \binom{m}{k}\big(2^{d+1}|Q| m^{\ell-1}\big)^{k}.$$
Now let  $p:=2^{d+1}|Q| m^{\ell-1}$.
Since
$|Q|<m^{-\ell}$ we  have $mp \le 2^{d+1}$. Since we assume $U\ge 2^{d+2}$ we get from
Lemma \ref{small-summation},
$$\sum_{k=U}^{m} \binom{m}{k}\big(2^{d+1}|Q| m^{\ell-1}\big)^{k}\le
\frac{2(mp)^U}{U!} \le \frac{2(2^{d+1}N^{-\eps})^U}{U!}.$$
Thus we get from \eqref{eq:comparisonE-A}
\Be\label{Emestimate}\bbP( E_m(\ell,\eps,h)^\complement\cap F )\le (6N)^d\frac{2(2^{d+1}N^{-\eps})^{U(\eps,h)}}{U(\eps,h)!}.\Ee
It is not difficult to check  that
$$\frac {6^d \cdot 2(2^{d+1})^U}{U!} \le 1 \text{ for } U> e^{d+2}-1\,;$$
this can be verified by taking logarithms and replacing $\log U$ with the smaller constant $\int_1^{U-1}\ln(t)dt$. Since in addition
$U=U(\eps,h)\ge \lceil\tfrac{2d+h+1}{\eps}\rceil$ then we get
$N^{d-\eps U}\le \frac 12 N^{-d-h}$  and thus
$$\mathbb P(E_m(\ell,\eps,h)^\complement\cap F)  \le \frac 12 N^{-h-d}.$$

We have already remarked that  $\mathbb P(F^\complement)< \frac 12 N^{-h}$.
Thus,
\begin{align*}
\mathbb P\big(E(\ell,\eps, h)^\complement\big)
\le \mathbb P(F^\complement)+\sum_{m\le N^{\frac{d-\eps}\ell}}
\mathbb P\big(E_m(\ell,\eps,h)^\complement\cap F\big)
\le N^{-h}.
\end{align*}
 This completes the proof.
\end{proof}

\begin{lemma}\label{le:ballvariant}
Let $\ell \in \bbN$,
$0<\beta\le d/\ell$, and $h\in \bbN$.
Let $N$ be an integer such that  $N>\max\{ 2\ell, e^{e^e}\}$ and $\gcd(N,\ell!)=1$.
Let $\cE_m(\ell,\beta,h)$ denote  the event that
$$\sigma_m^{*\ell}(Q)\le
(\beta\ell)^{-1}\big(10^{d+1}\ell^2(\ell+h)\big)^\ell
\frac{\log N}{\log\log N}$$
holds for all cubes of measure at most $N^{-\beta\ell}$, and let
$$\cE(\ell,\beta, h)=\bigcap_{m\le N^\beta} \cE_m(\ell, \beta,h).$$ Then
$\cE(\ell,\beta, h)$
has probability at least $1-N^{-h}$.
\end{lemma}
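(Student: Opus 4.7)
The plan is to proceed by induction on $\ell$, exploiting the fact that $\{\sigma_m^{*\ell}\}_m$ is a nondecreasing family of positive measures (since $\sigma_m=\sigma_{m-1}+\delta_{x_m}$). By monotonicity $\cE(\ell,\beta,h)=\cE_{\lfloor N^\beta\rfloor}(\ell,\beta,h)$, so it suffices to establish the bound at the single value $m=\lfloor N^\beta\rfloor$. The case $\ell=1$ is a classical Chernoff bound: $\sigma_m(Q)=\sum_{j=1}^m\bbone\{x_j\in Q\}$ is a sum of i.i.d.\ Bernoullis with common mean $\le 2^dN^{-\beta}$ and aggregate mean $\le 2^d$, so Lemma~\ref{small-summation} with threshold $U=\lceil A\log N/\log\log N\rceil$ and a union bound over $\le (6N)^d$ grid-aligned cubes of measure $N^{-\beta}$ gives the result provided $A\ge h+d+O(1)$; the constant $10^{d+1}(1+h)/\beta$ in $M(1,\beta,h)$ is more than sufficient.

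For $\ell\ge 2$ we would use the already-proved Lemma~\ref{le:ball} at every strictly lower level. For each $q=1,\dots,\ell-1$, apply Lemma~\ref{le:ball} with parameters $(q,\eps_q,h+1)$ where $\eps_q:=\beta(\ell-q)\in(0,d)$. The assumption $\beta\ell\le d$ makes the parameters admissible, and the identity $j^{-q}N^{-\eps_q}\ge N^{-\beta q-\beta(\ell-q)}=N^{-\beta\ell}$ for $j\le N^\beta$ shows that Lemma~\ref{le:ball} actually controls the cubes of measure $N^{-\beta\ell}$ that appear here. Setting
$$F:=\bigcap_{q=1}^{\ell-1}E(q,\beta(\ell-q),h+1),$$
Lemma~\ref{le:ball} yields $\bbP(F^\complement)\le(\ell-1)N^{-h-1}\le\tfrac 12 N^{-h}$. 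On $F$, the binomial decomposition from Lemma~\ref{lemma:elem}(i), together with the \emph{constant} (not $\log N$) bound $\sigma_{j-1}^{*q}(Q')\le M(q,\beta(\ell-q),h+1)$, gives a pointwise, $N$-independent estimate $\Delta_{j,\ell}(Q)\le\tilde\kappa$ valid for all $j\le N^\beta$ and $|Q|\le N^{-\beta\ell}$, where $\tilde\kappa:=1+\sum_{q=1}^{\ell-1}\binom{\ell}{q}M(q,\beta(\ell-q),h+1)$.

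It then remains to count nonzero increments. Lemma~\ref{lemma:elem}(ii) with $p=2^{d+1}|Q|m^{\ell-1}\le 2^{d+1}N^{-\beta}$ (hence $mp\le 2^{d+1}$) combined with Lemma~\ref{small-summation} gives, for any $U\ge 2^{d+2}$,
$$\bbP\bigl(\#\{j\le N^\beta:\Delta_{j,\ell}(Q)\neq 0\}\ge U\bigr)\le 2\bigl(2^{d+1}e/U\bigr)^U.$$
I would choose $U=\lceil A\log N/\log\log N\rceil$ with $A:=M(\ell,\beta,h)/\tilde\kappa$. The hypothesis $N>e^{e^e}$ gives $\log U=(1-o(1))\log\log N$, so the right-hand side is bounded by $N^{-(1-o(1))A}$. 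A union bound over the $\le (6N)^d$ reference cubes of measure $N^{-\beta\ell}$ with corners in $\Gamma_N^d$ (every cube of measure $\le N^{-\beta\ell}$ sitting in at most $3^d$ reference cubes, exactly as in the proof of Lemma~\ref{le:ball}) then yields $\bbP(\text{bad}\cap F)\le\tfrac 12 N^{-h}$ provided $A\ge h+d+C_d$; together with $\bbP(F^\complement)\le\tfrac 12 N^{-h}$ this completes the proof.

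The hard part will be the arithmetic verification that $A=M(\ell,\beta,h)/\tilde\kappa$ dominates $h+d+C_d$. Using Lemma~\ref{le:growth} one has $M(q,\beta(\ell-q),h+1)\le(\beta(\ell-q))^{-1}(e^{d+3}q^2(h+q+1))^q$, so $\tilde\kappa$ carries a factor $\beta^{-1}$ that exactly cancels the $(\beta\ell)^{-1}$ inside $M(\ell,\beta,h)$, leaving $A$ a $\beta$-independent quantity. Isolating the dominant $q=\ell-1$ term in $\tilde\kappa$ gives $A\gtrsim \ell^{-2}\bigl(10^{d+1}\ell^2(\ell+h)\bigr)\cdot\bigl[10^{d+1}\ell^2/(e^{d+3}(\ell-1)^2)\bigr]^{\ell-1}$, which is numerically enormous for any $\ell\ge 2$, $d,h\ge 1$. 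The specific numerical base $10^{d+1}$ (as opposed to the $e^{d+3}$ appearing naturally in Lemma~\ref{le:growth}) is engineered precisely so this ratio dominates; here $\log N/\log\log N$ plays the role that $1/\eps$ played in the proof of Lemma~\ref{le:ball}.
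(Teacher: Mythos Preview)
Your proposal is correct and follows essentially the same approach as the paper: condition on the event $\widetilde F=\bigcap_{q=1}^{\ell-1}E(q,\beta(\ell-q),h+1)$ from Lemma~\ref{le:ball}, use the binomial formula to bound each increment $\Delta_{j,\ell}(Q)\le\tilde\kappa$ on $\widetilde F$, apply Lemma~\ref{lemma:elem}(ii) and Lemma~\ref{small-summation} to bound the number of nonzero increments, and then verify via Lemma~\ref{le:growth} (the paper's inequality \eqref{eq:sec-kaest}) that the resulting constant fits inside $(\beta\ell)^{-1}(10^{d+1}\ell^2(\ell+h))^\ell$. Your monotonicity reduction to the single value $m=\lfloor N^\beta\rfloor$ and your separate treatment of $\ell=1$ are minor streamlinings not in the paper; conversely, your $N^{-(1-o(1))A}$ step needs to be made quantitative for fixed $N>e^{e^e}$, which the paper does explicitly in \eqref{VNbounds} (this is exactly where the hypothesis $N>e^{e^e}$, giving $\log\log\log N\le\tfrac12\log\log N$, is used).
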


\begin{proof} Let
\Be
\label{eq:tkappadef}\widetilde \ka(\ell, \beta, h):=
\sum_{q=0}^{\ell-1}\binom {\ell}{q} M(q,
\beta(\ell- q), h+1)\,
\Ee and let $V\ge e^{2d+8}h$ be a positive  integer. Let
$\widetilde E_m(\ell, h,V)$
denote  the event that
$$\sigma_m^{*\ell}(Q)\le \widetilde \kappa(\ell,\beta, h) V \frac{\log N}{\log\log N}$$
holds true for all cubes with  measure at most $N^{-\ell\beta}$.
We shall show that for sufficiently large $V$ the complement of  this event has small probability.

We condition on the event
\begin{align}\label{eq:tcFdef}\widetilde F&= \bigcap_{q=1}^{\ell-1}
 \cE(q,\beta(\ell-q),h+1)
 \\
 &=\Big\{\sigma_m^{*q}(Q)\le M(q,\beta(\ell-q), h+1)
  \notag
 \\&\qquad\qquad \forall Q \text{ with } |Q|\le m^{-q}N^{-\beta(\ell-q)}, \,\,
 1\le m\le N^{\frac{d-\beta(\ell-q)}{q}}\Big\}.
 \notag
 \end{align}
By Lemma \ref{le:ball},
$$\bbP(\widetilde F^\complement)\le \ell N^{-h-1}\le \frac 12 N^{-h}.$$

We shall now estimate $\bbP(\widetilde {E}_m(\ell, h,V)^\complement\cap \widetilde F)$.
The assumptions $m\le N^\beta$, $|Q|\le N^{-\ell \beta}$ with $\beta\le d/\ell$
 imply  for $q\le \ell-1$ that
$m\le N^{\frac{d-\beta(\ell-q)}{q}}$  (since $d-\beta\ell\ge 0$) and
$|Q|\le m^{-q}N^{-(\ell-q)\beta }$.  Thus we can use \eqref{binomial-formula-1}
to see that $\Delta_{j,\ell}(Q)\le \widetilde \ka(\ell,\beta,h)$ on $\widetilde F$, for $j=1,\dots,m$.

Let
$\widetilde \cA_{V,m}^Q$ be the event that
\Be\label{eq:AvmQ-event}
\sum_{j=1}^m\Delta_{j,\ell}(Q)\ge V_N \widetilde \kappa(\ell, \beta, h), \text{ where } V_N=\Big\lfloor V\frac{\log N}{\log\log N}\Big\rfloor\,.
\Ee
Let $\widetilde \cA_{V,m}$ be the event that
\eqref{eq:AvmQ-event} holds for some cube with measure at most $N^{-\ell\beta}$.
Arguing as in the proof of Lemma \ref{le:ball} we find that
$$\bbP(\widetilde \cA_{V,m} \cap \widetilde F)\le 2\cdot  (6N)^d \frac{ 2^{(d+1)V_N}}{V_N!}$$
\begin{subequations} \label{VNbounds} We need to verify that
\Be\label{eq:logbd-a}
2\cdot  (6N)^d \frac{ 2^{(d+1)V_N}}{V_N!}\le N^{-d-1-h}
\Ee for $V\ge e^{2d+8} h$ and $N>e^{e^e}$.
We take logarithms and replace  $\log V_N!$ with the lower bound
$\int_1^{V_N-1}\log t\, dt=(V_N-1)\log(V_N-1)-V_N+2$. Then \eqref{eq:logbd-a} follows from
\begin{multline}\label{eq:logbd-b}
\log 2 +
d\log 6 + V_N(1+(d+1)\log 2)-2- (V_N-1)\log(V_N-1)\\<-(h+1+d)\log N.
\end{multline}
Since by  assumption  $V\ge  e^{2d+10}$ and $N>e^{e^e}$    crude estimates
show that \eqref{eq:logbd-b}
is implied by
\Be \label{eq:logbd-c}\frac{V_N}2\log(V_N-1)
\ge (d+h+1)\log N.\Ee
For $N\ge e^{e^e}$ we have $\log\log\log N \le \frac 12\log\log N$ and therefore
$\log(V_N-1)\ge \frac 12 \log\log N$. Thus
\eqref{eq:logbd-c} is implied by
$V \ge 4(h+2+d)$
which holds since we assume  $V\ge  e^{2d+8} h$
and  $N\ge e^{e^e}$. Thus \eqref{eq:logbd-a} holds.
\end{subequations}
We thus get $\bbP(\widetilde {E}_m (\ell,h,V)^\complement\cap \widetilde F)\lc N^{-d-h-1}$ and hence
\begin{align*}\bbP(\cup_{m\le N^\beta}\widetilde {E}_m (\ell,h,V)^\complement)&\lc \bbP(\widetilde F^\complement)
+\sum_{m\le N^\beta}
\bbP(\widetilde {E}_m (\ell,h,V)^\complement\cap\widetilde F)\\&\lc \tfrac 12  N^{-h}+ N^{\beta-d-h-1}\le N^{-h}.\end{align*}

It remains to show that
 \Be\label{eq:Vtkaest} V\widetilde \kappa(\ell,\beta,h)\le \frac 1{\beta\ell}
\big(10^{d+1}\ell^2(\ell+h)\big)^\ell\Ee
for $V= e^{2d+10}$. For  $\widetilde \kappa(\ell, \beta, h)$ we
have, by Lemma \ref{le:growth},
\[\widetilde \ka(\ell, \beta, h)
\le
 1+ \sum_{q=1}^{\ell-1}
\frac{\ell}{\ell-q} \binom {\ell-1}{q} \frac{(e^{d+3}q^2 (h+1+q))^q}{\beta(\ell-q)}
\]
and the right hand side is estimated by $(\beta\ell)^{-1}\ka_*(\ell,h)$ where
$\ka_*(\ell,h)$ is the expression in line \eqref{eq:kastarline}.
The estimation that follows in the proof of Lemma \ref{le:growth} yields
\Be\label{eq:sec-kaest}
\widetilde \ka(\ell, \beta, h)\le e^{1/2} \frac{\ell}{\beta}(e^{d+3}\ell^2(h+\ell))^{\ell-1}
\Ee
and thus  clearly
\eqref{eq:Vtkaest} follows.
\end{proof}

\begin{lemma}\label{le:logNvariant}
Let $\ell \in \bbN$,  $h\in \bbN$ and $B\ge 1$. There exist positive constants
$N_0(B,\ell)$ and $M_0(B, \ell, h, d)$  so that for $N\ge N_0(\ell, B)$ the event
$$\max_{m\le (BN^d\log N)^{1/\ell}}\max_{u\in \Gamma_N^d}\sigma_m^{*\ell}(\{u\})\le
M_0 (\ell, B,h,d) \log N$$
has probability at least $1-N^{-h}$.
\end{lemma}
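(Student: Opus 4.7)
The plan is to run the same binomial-sum argument as in Lemma \ref{le:ballvariant}, with singletons $\{u\}\subset\Gamma_N^d$ in place of small cubes and with a slightly weaker tail bound of order $\log N$ in place of $\log N/\log\log N$. The loss of the $(\log\log N)^{-1}$ factor is precisely what lets us push $m$ up to $(BN^d\log N)^{1/\ell}$, rather than stopping at $N^{d/\ell}$ as in Lemma \ref{le:ballvariant}.

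First I would use the monotonicity $\sigma_m^{*\ell}(\{u\})\le\sigma_{m^*}^{*\ell}(\{u\})$ with $m^*=\lfloor (BN^d\log N)^{1/\ell}\rfloor$ to reduce the supremum over $m$ to the single endpoint $m=m^*$. Next, I would condition on a good event $\widetilde F$ on which each increment $\Delta_{j,\ell}(\{u\})=\sigma_j^{*\ell}(\{u\})-\sigma_{j-1}^{*\ell}(\{u\})$ is bounded by a constant $\widetilde\kappa(\ell,h,d)\log N$, uniformly in $j\le m^*$ and $u\in\Gamma_N^d$. By \eqref{binomial-formula-1},
$$\Delta_{j,\ell}(\{u\})=\bbone[\ell x_j=u]+\sum_{k=1}^{\ell-1}\binom{\ell}{k}\sigma_{j-1}^{*k}(\{u-(\ell-k)x_j\}),$$
so it suffices to control $\max_v\sigma_{j-1}^{*k}(\{v\})$ for each $1\le k\le\ell-1$. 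For this I would invoke Lemma \ref{le:ballvariant} with parameters $(\ell',\beta,h')=(k,d/k,h+1)$: every singleton of $\Gamma_N^d$ is contained in a cube of measure $N^{-d}=N^{-k(d/k)}$, and $m^*\le N^{d/k}$ once $N\ge N_0(\ell,B)$ since $d/k>d/\ell$. The union of the $\ell-1$ resulting failure events has total probability at most $\tfrac12 N^{-h}$.

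With $\widetilde F$ secured, the event $\{\sigma_{m^*}^{*\ell}(\{u\})\ge M_0\log N\}\cap\widetilde F$ forces at least $K:=\lceil M_0\log N/\widetilde\kappa\rceil$ of the $m^*$ increments $\Delta_{j,\ell}(\{u\})$ to be nonzero. By Lemma \ref{lemma:elem}(ii), for any $j_1<\cdots<j_K\le m^*$,
$$\bbP\{\Delta_{j_1,\ell}(\{u\})\ne 0,\dots,\Delta_{j_K,\ell}(\{u\})\ne 0\}\le \big(2N^{-d}(m^*)^{\ell-1}\big)^K.$$
Summing over the $\binom{m^*}{K}\le (m^*)^K/K!$ choices of indices and using $(m^*)^\ell\le BN^d\log N$ gives a bound of $(2B\log N)^K/K!$ at each fixed $u$. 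A choice $K=C_2(B,h,d)\log N$ with $C_2$ large enough, combined with Stirling's estimate $K!\ge (K/e)^K$, turns the bound into $(2eB/C_2)^{C_2\log N}\le N^{-h-2d-1}$. A union bound over the $N^d$ points of $\Gamma_N^d$ then yields an overall failure probability on $\widetilde F$ of at most $N^{-h-d-1}$, and combining with $\bbP(\widetilde F^\complement)\le\tfrac12 N^{-h}$ completes the proof, with $M_0=\widetilde\kappa\cdot C_2$.

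The step I expect to require the most care is matching the range $m\le m^*$ to the range $j\le N^{d/k}$ to which Lemma \ref{le:ballvariant} applies at each intermediate exponent $k<\ell$; this is what dictates the threshold $N_0(B,\ell)$. The quantitative choice of $C_2$ (and thus $M_0$) also needs the Stirling estimate tracked carefully enough that $(2eB/C_2)^{C_2\log N}$ beats $N^{-h-2d-1}$, but this is routine bookkeeping once the structure above is set up.
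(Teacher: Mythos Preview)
Your overall structure---condition on a good event controlling lower convolution powers, then run a binomial-sum/Stirling argument on the nonzero increments---is exactly the right one, and matches the paper. But there is a genuine gap in the conditioning step, and it propagates to the endgame.

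You propose to bound $\Delta_{j,\ell}(\{u\})$ by invoking Lemma~\ref{le:ballvariant} with $(\ell',\beta)=(k,d/k)$. That lemma gives
\[
\max_{v}\sigma_{j-1}^{*k}(\{v\})\ \lesssim\ \frac{\log N}{\log\log N},
\]
so via \eqref{binomial-formula-1} you only obtain $\Delta_{j,\ell}(\{u\})\lesssim \log N$, which is what you wrote. But then the number of nonzero increments forced by $\sigma_{m^*}^{*\ell}(\{u\})\ge M_0\log N$ is
\[
K\ \ge\ \frac{M_0\log N}{\widetilde\kappa\log N}\ =\ \frac{M_0}{\widetilde\kappa},
\]
a \emph{constant} (or at best $O(\log\log N)$ if you retain the $\log\log N$), not $C_2\log N$ as your formula $K=\lceil M_0\log N/\widetilde\kappa\rceil$ and the subsequent Stirling step assume. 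With $K=O(1)$ the tail bound $(2B\log N)^K/K!$ blows up in $N$, and even with $K\sim\log\log N$ one computes $\log\bigl((2B\log N)^K/K!\bigr)\sim (\log\log N)^2\to+\infty$. So the argument as written does not close.

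The paper's fix is to use Lemma~\ref{le:ball} rather than Lemma~\ref{le:ballvariant} for the conditioning: with $\eps_q=\tfrac{d}{2}(1-\tfrac{q}{\ell})>0$ one gets, on the event $\widehat F=\bigcap_{q=1}^{\ell-1}E(q,\eps_q,h+1)$, a \emph{constant} bound $\sigma_{j-1}^{*q}(\{u\})\le M(q,\eps_q,h+1)$, valid for $j\le N^{(d-\eps_q)/q}=N^{d/(2q)+d/(2\ell)}$. The slightly smaller range is still large enough to cover $m^*\le (BN^d\log N)^{1/\ell}$ once $B\log N\le N^{d/(2(\ell-1))}$, which is the content of the threshold $N_0(B,\ell)$. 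With increments bounded by a constant $\widehat\kappa$, one genuinely gets $K\sim\log N$ nonzero increments, and your Stirling computation $(2eB/C_2)^{C_2\log N}\le N^{-(2d+h+1)}$ then goes through exactly as you describe. In short: swap Lemma~\ref{le:ballvariant} for Lemma~\ref{le:ball} with a positive $\eps$, drop the spurious $\log N$ on the increment bound, and your argument becomes correct and essentially identical to the paper's.
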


\begin{proof}
If $\ell\ge 2$ we may assume that
\Be\label{N0cond}  B\log N \le N^{\frac{d}{2(\ell-1)}} \text{
for $N\ge N_0(\ell,B)$.}
\Ee

 Let
\Be
\label{eq:hatkappadef}\widehat \ka(\ell, h):=
\sum_{q=0}^{\ell-1}\binom {\ell}{q} M(q, \frac d2(1-\frac q \ell), h+1)\,.
\Ee
and let
\Be \label{cVassu}
\cV\ge 2d+h+1+20B.
\Ee

Let $\widehat E_m(\ell, h,\cV)$
denote  the event that
$$\sigma_m^{*\ell}(\{u\})\le \widehat \kappa(\ell, h) \cV \log N$$
holds true for all $u\in \Gamma_N^d$.
We condition on the event
\Be\label{eq:thatFdef}\widehat F= \bigcap_{q=1}^{\ell-1}
 E(q,\frac d2 (1-\frac q\ell),h+1)\,,
\Ee  again  with the sets on the right hand side defined as in the statement of Lemma \ref{le:ball}.
Then the event $\widehat F^\complement $ has probability at most $\ell N^{-h-1}\le \frac 12 N^{-h}$.

It remains to estimate
$\sum_{m\le (BN^d\log N)^{1/\ell}}
\bbP(\widehat E_m(\ell, h,V)^\complement\cap \widehat F)$.
If we apply the condition $E(q, \frac{d}{2}(1-\frac q\ell),h+1)$ only for cubes of measure $N^{-d}$ then we see that
\Be \sigma_m^{*q}(\{u\})\le M(q, \frac{d}{2}(1-\frac q\ell),h+1), \quad  m\le
N^{\frac d{2q}+\frac d{2\ell}},
\,\,1\le q\le \ell-1.
\Ee
In order to apply it for all $m\le (BN^d\log N)^{1/\ell}$ we must have
$(BN^d\log N)^{1/\ell} \le N^{\frac d{2q}+\frac d{2\ell}}$ which is implied by
\eqref{N0cond}.

By \eqref{binomial-formula-1} we have
$\Delta_{j,\ell}(\{u\})\le \widehat\kappa(\ell, h)$ on $\widehat F$, for $j=1,\dots,m$.
Let $\widehat \cA_{\cV,m}^u$ be the event that
\Be\label{eq:Avmu-event}
\sigma_m^{*\ell}(\{u\})\equiv \sum_{j=1}^m\Delta_{j,\ell}(\{u\})\ge \cV_N \widehat \kappa(\ell, h), \text{ where } \cV_N=\lfloor \cV\log N\rfloor\,
\Ee
and let $\widehat \cA_{\cV,m}$ be the event that
\eqref{eq:Avmu-event} holds for all $u\in \Gamma_N^d$.

 Now we estimate $\widehat \cA_{\cV,m}^u$ on $\widehat F$.
 Notice that if \eqref{eq:Avmu-event} holds on $\widehat F$ there are at least $\cV_N$ indices $j$ so that $\Delta_{j,\ell}(\{u\})\neq 0$ (and we may assume $m\ge \cV_N$).
We argue as in the proof of Lemma \ref{le:ball} using Lemma \ref{lemma:elem}, (ii),
to see that
$$\bbP(\widehat \cA_{\cV,m}^u\cap \widehat F) \le \sum_{k=\cV}^m
\binom{m}{k}(2N^{-d} m^{\ell-1})^k\,.
$$
In order to apply Lemma \ref{small-summation} we must have $\cV_N\ge 2mp$ with $p=2N^{-d} m^{\ell-1}$, and this is certainly satisfied if $\cV\ge 8B$.
Under this condition we thus get
\Be\bbP(\widehat \cA_{\cV,m}^u\cap \widehat F) \le \frac{2(2N^{-d} m^\ell)^{\cV_N}}{V_N!}
\le \frac{2 (2B\log N)^{\cV_N}}{\cV_N!}.\Ee

We use the inequality
\Be \label{factorials} \frac {T^n}{n!} \le e^{-n}, \text{ for $T\ge 1$ and $n\ge e^{2} T$}.
\Ee
To verify  this one takes logarithms and uses $\log (n!)\ge n\log n -n+1$. Thus
the inequality follows from
$n(\log T - \log n) \le -2n$
which is true for $n\ge e^{2}T$.

We  apply \eqref{factorials} with  $T=2B \log N$ and $n=\cV_N$.
Note that
by the assumption \eqref{cVassu} we get
$\cV_N\ge e^2 T$. Therefore
$$
\frac{2 (2B\log N)^{\cV_N}}{\cV_N!}\le 2 e^{-\cV_N} \le 2e^{1-\cV \log N}
\le N^{-(2d+h+1+10 B)}.
$$
Thus
\begin{align*}
&\bbP\big(\cup_{m\le (BN^d\log N)^{1/\ell}}
\widehat E_m(\ell,h,\cV)^\complement\big)\\
\le &
\bbP (\widehat F^\complement) +
\sum_{m\le (BN^d\log N)^{1/\ell}} \sum_{u\in \Gamma_N^d}
\bbP(\widehat \cA_{\cV,m}^u\cap \widehat F)
\\ \le& \frac 12 N^{-h} + (BN^d \log N)^{1/\ell}N^d N^{-10 B} N^{-2d-h-1}\, \le N^{-h}
\end{align*}
and
we get the assertion of the lemma.
\end{proof}

\noi{\it Remark.} It is also possible to give a proof of Lemma \ref{le:logNvariant}
based on the second version of Hoeffding's inequality
\eqref{hoeffdingb} in the appendix (\cf. \cite{chenthesis}).

\medskip

The following proposition can be seen as a discrete analog  to statement (iv) in
Theorem \ref{intro:theorem-C}.

\begin{proposition}
\label{prop:bounded-convolution-full-range}
Given integers $\kappa\ge 1$, $\ell\ge \ka+1$ and $h\ge 1$,  there exists
$N_\ka(\ell,h)\ge 1$ and $M_\ka(\ell,h,d)>0$ such that for all $N\ge N_\ka(\ell,h)$ with $\gcd(\ell!,N)=1$  the event

\Be
\label{eq:bdd-convol-full-range}
\max_{m\le (N^d\log N)^{\frac 1{\ell-\ka}}}\,\max_{u\in \Gamma_N^d}
\frac{\big|\sigma_m^{*\ell}(\{u\})-m^\ell N^{-d}\big|}
{(m^\ell N^{-d} )^{1/2}}
\le M_\kappa(\ell,h,d)
(\log N)^{1+\frac {\ka} 2}
\Ee
has probability at least $1-N^{-h}$.
\end{proposition}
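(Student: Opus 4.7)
The plan is to prove Proposition~\ref{prop:bounded-convolution-full-range} by induction on $\kappa\ge 1$, treating all $\ell\ge\kappa+1$ simultaneously. The key tool will be the martingale $\{W_j\}$ of Lemma~\ref{lemma:elem}(iii), which on a suitable good event represents the deviation $\sigma_m^{*\ell}(\{u\})-m^\ell N^{-d}$ as a telescoping sum of bounded increments. With $u\in\Gamma_N^d$ and $m\le (N^d\log N)^{1/(\ell-\kappa)}$ arbitrary, I would take the conditioning events $\mathcal{E}_j\in\mathscr{F}_j$ to enforce $L^\infty$ bounds on $\sigma_i^{*q}(\{v\})$ for all $i\le j$, $v\in\Gamma_N^d$ and $1\le q\le \ell-1$. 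For $q\le \ell-\kappa$ one has $i^q\le N^d\log N$ in the admissible range, so Lemma~\ref{le:logNvariant} yields $\sigma_i^{*q}(\{v\})\lesssim \log N$ directly. For $\kappa\ge 2$ and $q\in\{\ell-\kappa+1,\dots,\ell-1\}$, I would invoke the inductive hypothesis with $\kappa'=q-(\ell-\kappa)\in\{1,\dots,\kappa-1\}$; the pairing is designed so that $q-\kappa'=\ell-\kappa$ and the admissible range of $i$ matches the current $m$-range, producing
\[
\sigma_i^{*q}(\{v\})\le \frac{i^q}{N^d}+M_{\kappa'}(q,h',d)\Big(\frac{i^q}{N^d}\Big)^{1/2}(\log N)^{1+\kappa'/2}.
\]
A union bound gives $\mathbb{P}(\mathcal{E}_m^\complement)\le N^{-h-1}$ for $N$ large, and on $\mathcal{E}_m$ one has $W_m=\sigma_m^{*\ell}(\{u\})-m^\ell N^{-d}$.

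Next, I would estimate the increments on $\mathcal{E}_{j-1}$. The binomial identity \eqref{binomial-formula-1} together with the good-event bounds gives a polynomial-in-$\log N$ pointwise bound on $|Y_j|$. For the conditional variance, since $\gcd(k,N)=1$ the map $w\mapsto u-kw$ is a bijection of $\Gamma_N^d$, so Cauchy--Schwarz and $\|f\|_{\ell^2}^2\le \|f\|_\infty\|f\|_{\ell^1}$ yield
\[
\mathbb{E}\bigl[Y_j^2\mid\mathscr{F}_{j-1}\bigr]\le \frac{\ell}{N^d}\sum_{k=1}^\ell \binom{\ell}{k}^2\,\|\sigma_{j-1}^{*(\ell-k)}\|_\infty\,(j-1)^{\ell-k}.
\]
Summing over $j$ (the dominant contribution coming from $k=1$ via the inductive control on $\|\sigma_{j-1}^{*(\ell-1)}\|_\infty$) gives $V_m:=\sum_j\mathbb{E}[Y_j^2\mid\mathscr{F}_{j-1}]\lesssim m^\ell N^{-d}(\log N)^{B(\kappa)}$ for an explicit mild $B(\kappa)$. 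Freedman's Bernstein-type inequality for martingales will then supply
\[
\mathbb{P}\bigl(|W_m|\ge t,\ \mathcal{E}_m\bigr)\le 2\exp\!\Big(-\frac{t^2/2}{V_m+C_\kappa(\log N)\,t/3}\Big),
\]
and the threshold $t=M_\kappa(m^\ell N^{-d})^{1/2}(\log N)^{1+\kappa/2}$ with $M_\kappa$ sufficiently large will force the exponent to exceed $(2d+h+1)\log N$. A union bound over the at most $N^{2d}$ pairs $(m,u)$ then produces the advertised $N^{-h}$ failure probability. The residual regime in which $m^\ell N^{-d}$ is too small for Freedman's variance term to dominate will be handled directly by Lemma~\ref{le:logNvariant} applied to $\sigma_m^{*\ell}$ itself (valid there since $m^\ell\lesssim N^d\log N$), which already bounds $\sigma_m^{*\ell}(\{u\})\le M_0\log N$.

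The hard part will be bookkeeping of the logarithmic factors. Each invocation of the inductive hypothesis inserts polylog factors into the $L^\infty$ control of $\sigma_j^{*q}$, which propagate into both the pointwise bound on $|Y_j|$ and into $V_m$; these powers must combine through Freedman's exponent to produce exactly $(\log N)^{1+\kappa/2}$ and nothing worse. The recursive pairing $\kappa'=q-(\ell-\kappa)$ (as opposed to a naive $\kappa'=\kappa-1$) is what ensures the induction closes with the correct exponent as $\kappa$ increases; tracking $A(\kappa)$, $B(\kappa)$ and the constants $M_\kappa$ through this recursion is the principal technical burden of the proof.
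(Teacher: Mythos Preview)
Your overall scheme---induction on $\kappa$, the good events $\cE_j$ built from Lemma~\ref{le:logNvariant} together with the inductive hypothesis at lower $\kappa'$, and the pairing $\kappa'=q-(\ell-\kappa)$---is exactly the architecture of the paper's proof. The gap is in the variance step that feeds Freedman's inequality for $\kappa\ge 2$. Your displayed bound
\[
\mathbb E\bigl[Y_j^2\mid\sF_{j-1}\bigr]\le \frac{\ell}{N^d}\sum_{k=1}^\ell \binom{\ell}{k}^2\,\|\sigma_{j-1}^{*(\ell-k)}\|_\infty\,(j-1)^{\ell-k}
\]
is correct, but it does \emph{not} give $V_m\lesssim m^\ell N^{-d}(\log N)^{B(\kappa)}$. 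For $\kappa\ge 2$ the inductive hypothesis does not say $\|\sigma_{j-1}^{*(\ell-1)}\|_\infty\lesssim\text{polylog}$; it says $\sigma_{j-1}^{*(\ell-1)}(\{v\})$ is close to its mean $(j-1)^{\ell-1}N^{-d}$, which is a positive power of $N$ in the range $j\le(N^d\log N)^{1/(\ell-\kappa)}$. Plugging $\|\sigma_{j-1}^{*(\ell-1)}\|_\infty\approx (j-1)^{\ell-1}N^{-d}$ into your $k=1$ term and summing gives $V_m\gtrsim m^{2\ell-1}N^{-2d}$, which exceeds $m^\ell N^{-d}$ by the factor $m^{\ell-1}N^{-d}$---a power of $N$, not a polylog. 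With this $V_m$, Freedman's exponent $t^2/(2V_m)$ at the threshold $t=M_\kappa(m^\ell N^{-d})^{1/2}(\log N)^{1+\kappa/2}$ is only $O(1)$, and the argument collapses.

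The paper sidesteps this by abandoning conditional variance altogether for $\kappa\ge 2$ and using Azuma--Hoeffding (Corollary~\ref{lemma:azuma}) with only the \emph{pointwise} bound $|Y_j|\le a_j$. The point is that although $\Delta_{j,\ell}(\{u\})$ and its conditional mean are both large (of order $m^{\ell-1}N^{-d}$), their difference $Y_j$ is small: the inductive fluctuation estimate gives $|Y_j|\lesssim (m^{\ell-1}N^{-d})^{1/2}(\log N)^{(\kappa+1)/2}$, hence $A=\sum a_j^2\lesssim m^\ell N^{-d}(\log N)^{\kappa+1}$, and Azuma delivers exactly $(\log N)^{1+\kappa/2}$. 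Your Freedman route is salvageable only if you first center each convolution power---replace $\sigma_{j-1}^{*k}(\{v\})$ by $\sigma_{j-1}^{*k}(\{v\})-(j-1)^kN^{-d}$ before taking $\ell^2$ norms---so that the large means cancel; but at that point you are effectively reproving the pointwise bound, and Azuma is the cleaner tool. (For $\kappa=1$ your variance bound is fine, since there Lemma~\ref{le:logNvariant} genuinely gives $\|\sigma_{j-1}^{*q}\|_\infty\lesssim\log N$ for all $q\le\ell-1$; this is essentially the paper's MGF estimate \eqref{claimcondexp}.)
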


\begin{proof}
We prove this by induction on $\ka$.

\medskip

\noi{\it The case $\ka=1$}.
Let $B_0 \ge d+h+1$, sufficiently large.
We first remark that for  $m^\ell\le B_0 N^d \log N$
inequality
\eqref{eq:bdd-convol-full-range} is implied by
Lemma
\ref{le:logNvariant},
provided that
$N$ is sufficiently large. We thus may assume that
 \Be\label {B0assu} m\ge (B_0 N^d \log N)^{1/\ell}.\Ee

 Following \cite{koerner2008}, we will treat the telescopic sums
 $\sigma_m^{\ell}(\{u\})-m^{\ell}N^{-d}=\sum_{j=1}^m\sigma_j^{*\ell}-\sigma_{j-1}^{*\ell}- N^{-d}(j^\ell-(j-1)^\ell)$
  as a sum of martingale differences with respect to the filtration of
  $\sigma$-algebras  $\mathscr F_j$, with $\mathscr F_j$  generated by the random variables $x_1,\cdots,x_j$, see Lemma \ref{lemma:elem}, (iii).

By Lemma \ref{le:logNvariant}, there is a constant $M_0=M_0(\ell, B_0, h,d) $ so that
\begin{multline} \label{aprioriprob}
\bbP\Big(
\max_{1\le q\le \ell-1}\max_{1\le j\le (B_0 N^d \log N)^{1/q}} \max_{u\in \Gamma_N^d}\sigma_j^{*q}(\{u\}) \le M_0 \log N
\Big) \\ \ge 1-N^{-2(d+h+1)},
\end{multline}
 provided that $N$ is large enough.
Note that \Be (B_0 N^d \log N)^{1/\ell} \le  \min_{1\le q\le \ell-1}
(B_0 N^d \log N)^{1/q}\Ee
provided that $N$ is large enough.
Let $\cE_{j-1}$ denote the event
\Be \label{Ej-1}
\cE_{j-1}= \Big\{
\sigma_{j-1}^{*q}(\{u\}) \le M_0\log N \text{  for $1\le q\le \ell-1$ and all $u\in \Gamma_N^d$}\Big\}.
\Ee
Then \Be \label{Ej-1compl}\bbP \Big(\bigcup_{1\le j  \le
(N^d\log N)^{\frac{1}{\ell-1}}}\cE_{j}^\complement\Big)\le N^{-2(d+h+1)},
\Ee
Define for fixed $u\in \Gamma_N^d$
\[
Y_j\equiv Y_{j,u}:=\begin{cases}
\sigma_j^{*\ell}-\sigma_{j-1}^{*\ell}- N^{-d}(j^\ell-(j-1)^\ell)&\text{ on }
\cE_{j-1},
\\0 &\text{ on }  \cE_{j-1}^\complement.
\end{cases}
\]

We shall apply Lemma \ref{lemma:elem} (iii) to the martingale $\{W_j\}_{j=0}^m$
with $W_0=0$ and  $W_j=\sum_{\nu=1}^j Y_\nu$ for $j\ge 1$.
We prepare for an application of Hoeffding's inequality (Lemma \ref{lemma:hoeffding}) and
estimate the conditional expectation of $e^{\lambda  Y_j}$ given fixed $x_1, \dots,x_{j-1}$.

\Be \label{claimcondexp}
\text{ \it Claim: }\quad
\begin{aligned}
&\text{\it For $|\la|\le (2^\ell M_0 \log N)^{-1} $,}
\\
&\,\bbE[ e^{\lambda Y_j} | \, x_1,\dots, x_{j-1}] \le
 \exp \big(3m^{\ell-1}N^{-d} (2^\ell M_0)^2 (\log N)^2 \la^2\big)\,.
\end{aligned}
\Ee

\noi{\it Proof of \eqref{claimcondexp}.}
Given $(x_1,\dots, x_{j-1}) $, if
inequality \eqref {Ej-1}  does not hold then
 we have  $Y_j=0$ and thus  $\bbE[ e^{\lambda Y_j} | \, x_1,\dots, x_{j-1}] =1$. Thus in this case  \eqref{claimcondexp} holds trivially. We thus need to bound
 \eqref{claimcondexp} on $\cE_{j-1}$.
 First observe
 $$N^{-d} (j^\ell-(j-1)^\ell)\le \ell j^{\ell-1} N^{-d}\le \ell m^{\ell-1} N^{-d}
 \le \ell \log N
 $$ by assumption.
 By \eqref{binomial-formula-1} and
 \eqref{Ej-1},
\[ \sigma_j^{*\ell}(\{u\})-\sigma_{j-1}^{*\ell}(\{u\})\le \sum_{k=0}^{\ell-1} \binom {\ell}{k}M_0\log N \le 2^\ell M_0 \log N\,.\]
Hence we get $|Y_j|\le 2^\ell  M_0\log N$. On the other hand, writing
$$Z_j=\Delta_{j,\ell}(\{u\})= \sigma_j^{*\ell}(\{u\})-\sigma_{j-1}^{*\ell}(\{u\}),$$
we have, by \eqref{binomial-formula-2},
\[\mathbb P(Z_j\neq 0|x_1,\cdots,x_{j-1})
\le N^{-d}\sum_{k=0}^{\ell-1} (j-1)^k \le 2m^{\ell-1}N^{-d}.
\]
We use these observations  to
estimate, for  $0<|\la|\le (2^\ell M_0\log N)^{-1}$, the term
$\bbE[ e^{\lambda Y_j} ]$  which in the following calculation is an abbreviation for the expectation  conditional  on  $x_1,\dots, x_{j-1}$.
Since the expectation of $Y_j$ with respect to  $x_j$ is zero we obtain
\begin{align*}
&\mathbb E [e^{\lambda Y_j}]= \sum_{k=0}^{\infty} \frac{\lambda^k\mathbb E[Y_j^k]}{k!}
= 1+\sum_{k=2}^{\infty} \frac{\lambda^k \mathbb E[Y_j^k]}{k!}=
\\
&1+\mathbb P(Z_j=0)\sum_{k=2}^{\infty} \frac{|\lambda|^k\mathbb E\big[|Y_j|^k\big|Z_j=0\big]}{k!}
+\mathbb P(Z_j\neq 0)\sum_{k=2}^{\infty} \frac{|\lambda|^k\mathbb E\big[|Y_j|^k\big|Z_j\neq 0\big]}{k!}\,.
\end{align*}
We have  $m^{\ell-1} N^{-d}\le \log N$ and thus
\begin{align*}
&\sum_{k=2}^{\infty} \frac{|\lambda|^k\mathbb E\big[|Y_j|^k\big|Z_j=0\big]}{k!}
\le \sum_{k=2} ^\infty \frac{ (|\la|\ell m^{\ell-1} N^{-d})^k}{k!} \\
& \le  (\la \ell m^{\ell-1} N^{-d}) ^2 \sum_{k=2}^\infty \frac{|\la \ell \log N|^k}{k!}
\le  (\la \ell m^{\ell-1} N^{-d})^2\,.
\end{align*}
Also
\begin{align*}
\mathbb P(Z_j\neq 0)\sum_{k=2}^{\infty} \frac{|\lambda|^k\mathbb E\big[|Y_j|^k\big|Z_j\neq 0\big]}{k!}&\le
\mathbb P(Z_j\neq 0)\sum_{k=2}^{\infty} \frac{|\lambda 2^\ell M_0 \log N|^k}{k!}
\\
&\le 2m^{\ell-1}N^{-d}  (\la 2^\ell M_0 \log N)^2.
\end{align*}
Combining the two estimates we get
\begin{align*}\mathbb E [e^{\lambda Y_j}|x_1,\dots,x_{j-1}]
&\le 1+ 3m^{\ell-1} N^{-d}(2^\ell M_0\log N)^2
\\
&\le \exp(3m^{\ell-1} N^{-d}(2^\ell M_0\log N)^2),
\end{align*}
thus proving \eqref{claimcondexp}.

\medskip

We now apply  Hoeffding's inequality
(\cf. \!\eqref{hoeffdinga} in  Lemma \ref{lemma:hoeffding} in the appendix)  with
the parameters
\begin{align*}
a_j^2&= 6m^{\ell-1} N^{-d}(2^\ell M_0\log N)^2
\\
A&= \sum_{j=1}^m a_j^2=6m^{\ell} N^{-d}(2^\ell M_0\log N)^2
\\
\delta &=(2^\ell M_0 \log N)^{-1}
\\
t&= 2
\sqrt{ A(d+h+1)\log N} =  M_1 (m^{\ell} N^{-d})^{1/2}(\log N)^{3/2}
\end{align*} where
$$M_1=
M_0 2^\ell \sqrt{24(d+h+1)}.
$$ For \eqref{hoeffdinga} to hold  we must have $t\le A\delta$ which one checks to be equivalent  with $(d+h+1) \log N\le \frac 32 m^\ell N^{-d}$, and thus valid by
\eqref{B0assu}. Thus, by \eqref{hoeffdinga},
\begin{align*}&\bbP\Big( \Big|\sum_{j=1}^m Y_{j,u}\Big| \ge M_1 (m^{\ell} N^{-d})^{1/2}(\log N)^{3/2} \Big)
\\& \le 2 \exp( -t^2/2A)= 2\exp (- 2(d+h+1)\log N)= 2N^{-2(d+h+1)}\,.
\end{align*}
Allowing $u\in \Gamma_N^d$  and $m\le (N^d\log N)^{\frac{1}{\ell-1}}$ to vary, we see that
\begin{multline}\label{eq:convol-finalstep}
\mathbb P\Big(\max_{u\in\Gamma_N^d}
\max_{m\le (N^d\log N)^{\frac{1}{\ell-1}}}
\frac{\big|\sum_{j=1}^m Y_{j,u}
\big|}{(m^\ell N^{-d})^{1/2}}\ge
M_1 (\log N)^{3/2} \Big)
\\
\le 2 N^{-d-2h-2} N^d (N^d\log N)^{\frac{1}{\ell-1}} \le N^{-2h-1}
\end{multline}
if $N$ is large enough.
Now $\sigma_m^{*\ell}(\{u\})- m^\ell N^{-d} -\sum_{j-1}^mY_{j,u}=0$ on
$\bigcap_{1\le j\le m} \cE_{j-1}$ and thus
\Be\label{eq:convol-finalstep2}
\begin{aligned}
&\mathbb P\Big(\max_{u\in\Gamma_N^d}
\max_{m\le (N^d\log N)^{\frac{1}{\ell-1}}}
\Big|
\sigma_m^{*\ell}(\{u\} -m^\ell N^{-d} -\sum_{j=1}^m Y_{j,u}\Big| \neq 0\Big)
\\&\quad \le
\sum_{1\le j-1\le (N^d \log N)^{\frac{1}{\ell-1}}
} \bbP (\cE_{j-1}^\complement)
\le   (N^d \log N)^{\frac{1}{\ell-1}} N^{-2(d+h+1)} \le N^{-2h-1}
\end{aligned}
\Ee
if $N$ is large enough. This establishes the assertion for $\kappa=1$.

\medskip

\noi{\it The induction step.} We now assume $\ka\ge 2$, $\ell\ge \ka+1$ and that the assertion holds for $1\le \ka'<\ka$. Let $h\ge 1$ and fix $j$ with $1\le j\le (N^d\log N)^{\frac{1}{\ell-\ka}}$.

By Lemma \ref{le:logNvariant} and by the induction hypothesis there exist
$N_{\ka-1}=N_{\ka-1}(\ell)$ and $\cC=\cC_{\ka-1}(\ell, h,d)\ge 1$ so that
for all $N\ge N_{\ka-1}$ the  event
$$E_{j-1}=E_{j-1}(\ell,\ka-1,N),$$
 given  by the following three conditions
 \eqref{Efirst}, \eqref{Esecond}, \eqref{Ethird}
 has probability
at least $1- N^{-2(h+d+1)}$.

\noi{\it Definition of $E_{j-1}$}:
\begin{subequations}\label{Ej-1def}
\begin{align}\label{Efirst}
&\max_{u\in \Gamma_N^d} \sigma_{j-1}^{*q}(\{u\}) \le \cC \log N\,\text{ for $1\le q\le \ell-\ka$.}
\\ \label{Esecond}
& \max_{u\in \Gamma_N^d} \sigma_{j-1}^{*q}(\{u\}) \le \cC \log N\,\quad \text{ for   those $q$ with }
\\\notag&\qquad\qquad
\text{ $\ell-\ka+1\le q\le \ell-1$, and $j-1 \le (N^{d}\log N)^{1/q}$}.
\\
\label{Ethird}
&\max_{u\in \Gamma_N^d}
\Big|\sigma_{j-1}^{*q}(\{u\}) - \frac{(j-1)^q}{N^d}\Big| \le \cC \Big(\frac{(j-1)^q}{N^d}\Big)^{1/2}(\log N)^{1+ \frac{\ka'}{2}}
\\
\notag
&\qquad\text{for those $q$, $\ka'$ with $\ka'<\ka$, $q\le \ell$,  }
\\ \notag &\qquad\text{$(N^d\log N)^{\frac{1}{q-\ka'+1}}\le j-1\le (N^d\log N)^{\frac{1}{q-\ka'}}$.}
\end{align}
\end{subequations}
We define
\[
\Ups_j\equiv \Ups_{j,u}:=\begin{cases}
\sigma_j^{*\ell}(\{u\})-\sigma_{j-1}^{*\ell}(\{u\})- N^{-d}(j^\ell-(j-1)^\ell)&\text{ on }
E_{j-1},
\\0 &\text{ on }  E_{j-1}^\complement
\end{cases}
\]
and claim that
\Be
\label{Upsclaim}
|\Ups_{j,u}|\le  \cC 2^{\ell+2} \Big(\frac{m^{\ell-1}}{N^d}\Big)^{1/2} (\log N)^{\frac{\ka+1}{2}}\,.
\Ee
To see \eqref{Upsclaim} we decompose using \eqref{binomial-formula-1}
\begin{align*}
&\sigma_j^{*\ell}-\sigma_{j-1}^{*\ell} - \frac{j^\ell-(j-1)^\ell}{N^{d}}
\\&\quad= \sum_{q=0} ^{\ell-\ka}\binom {\ell}{q} \delta_{(\ell-q)x_j}* \sigma_{j-1}^{*q}
-
\sum_{q=0} ^{\ell-\ka}\binom {\ell}{q} \frac{(j-1)^q }{N^{d}}
\\ &\quad+
\sum_{q=\ell-\ka+1}^{\ell-1}
\binom{\ell}{q} \, \delta_{(\ell-q)x_j}*\Big( \sigma_{j-1}^{*q}- \frac{(j-1)^q}{N^d}\Big)
\end{align*}
Now we have $m\le (N^d\log N)^{\frac{1}{\ell-\ka}}$ and thus
$\sum_{q=0} ^{\ell-\ka}\binom {\ell}{q} \frac{(j-1)^q }{N^{d}}\le2^\ell m^\ell N^{-d}\le 2^\ell\log N.$
On $E_{j-1}^\complement$ we have by \eqref{Efirst}
$\sum_{q=0} ^{\ell-\ka}\binom {\ell}{q} \delta_{(\ell-q)x_j}* \sigma_{j-1}^{*q}(\{u\})\le 2^\ell \cC \log N.$
If $\ell-\ka+1\le q\le \ell-1$ each $j$ with $j-1\le (N^d\log N)^{\frac{1}{\ell-\ka}}$ satisfies either
$(j-1)\le (N^d\log N)^{1/q}$ or
$(N^d\log N)^{\frac{1}{q-\ka'+1}}< j-1\le (N^d\log N)^{\frac{1}{q-\kappa'}}$ for some $\ka'$ with $1\le \ka'<\ka$.
If $(j-1)\le (N^d\log N)^{1/q}$ we use \eqref{Esecond} to bound
$|\sigma_{j-1}^{*q}\{u\}- \frac{(j-1)^q}{N^d}|$ by $(\cC+1)\log N$.
If $(N^d\log N)^{\frac{1}{q-\ka'+1}}< j-1\le (N^d\log N)^{\frac{1}{q-\kappa'}}$
we use \eqref{Ethird} to bound
$|\sigma_{j-1}^{*q}\{u\}- \frac{(j-1)^q}{N^d}|$ by $\cC((j-1)^qN^{-d} )^{1/2} (\log N)^{1+\kappa'/2}$ and hence by $\cC(m^{\ell-1}N^d)^{1/2} (\log N)^{\frac{\ka+1}{2}}$. Now sum and combine everything  to get
\eqref{Upsclaim}.

Now given \eqref{Upsclaim} we can apply the Azuma-Hoeffding inequality
(Corollary \ref{lemma:azuma}) with
\begin{align*}
a_j&= 2^{\ell+2} \cC
\Big(\frac{m^{\ell-1}}{N^d}\Big)^{1/2} (\log N)^{\frac{\ka+1}{2}},
\\
A&=\sum_{j=1}^m a_j^2= (2^{\ell+2} \cC)^2m^\ell N^{-d} (\log N)^{\ka+1},\\
t&= \sqrt{2A(2d+2h+2)\log N}= M_\ka (m^\ell N^{-d})^{1/2} (\log N)^{1+\frac{\ka}{ 2}}
\end{align*}
with $M_\ka(\ell, h,d)=(2d+2h+2)^{1/2}2^{\ell+2} \cC_{\ka-1}(\ell,h,d)$.
We get
\begin{align*}&\bbP\Big( \Big|\sum_{j=1}^m \Ups_{j,u}\Big| \ge M_\ka (m^{\ell} N^{-d})^{1/2}(\log N)^{1+\frac {\ka}{2}} \Big)
\\& \le 2 \exp( -t^2/2A)= 2\exp (- 2(d+h+1)\log N)= 2N^{-2(d+h+1)}
\end{align*}
To conclude we argue as in the beginning of the induction.
Allowing $u\in \Gamma_N^d$  and $m\le (N^d\log N)^{\frac{1}{\ell-\ka}}$ to vary, we see that
\begin{multline}\label{eq:convol-finalstep3}
\mathbb P\Big(\max_{u\in\Gamma_N^d}
\max_{m\le (N^d\log N)^{\frac{1}{\ell-\ka}}}
\frac{\big|\sum_{j=1}^m Y_{j,u}
\big|}{(m^\ell N^{-d})^{1/2}}\ge
M_\ka (\log N)^{1+\frac {\ka}{2}} \Big)
\\
\le 2 N^{-2d-2h-2} N^d (N^d\log N)^{\frac{1}{\ell-\ka}} \le N^{-2h-1}
\end{multline}
if $N$ is large enough.
Moreover
\begin{align*}
&\bbP\Big( \max_{u\in\Gamma_N^d}
\max_{m\le (N^d\log N)^{\frac{1}{\ell-\ka}}} \Big|\sigma^{*\ell}(\{u\})- \frac{m^\ell}{N^d}
\Big| \ge
M_\ka (\log N)^{1+\frac {\ka}{2}} \Big)
\\ &\le \ell N^{-2h-1} + \sum_{1\le j\le (N^d\log N)^{\frac{1}{\ell-\ka}} }\bbP(E_{j-1}^\complement)\,\le N^{-h}
\end{align*}
if $N\ge N_\ka(\ell)$ large enough.
\end{proof}

\begin{proof}[\it Proof of Proposition \ref{prop:point-masses}]
Let $P=m=\lfloor N^\beta\rfloor $, with $N$ large.
Then the inequalities for
$\sigma_P$ and $P^{-1}\sigma_P$
in
Lemma \ref{fourier-decay-discrete},
Lemma \ref{le:ballvariant}  and
Proposition \ref{prop:bounded-convolution-full-range}
 hold with positive (and high)  probability.
Proposition \ref{prop:point-masses} is an immediate
consequence.
\end{proof}

\section{Fourier restriction and multiplier
 estimates} \label{sec:restrmult}
\subsection{\it Proof of Theorem \ref{intro:theorem-A}}
\label{proofofrestr}
The restriction estimate is equivalent with the bound
\begin{equation}\label{adjrestriction}
\|\widehat{g\mu}\|_{L^{p'}(\bbR^d)}\lesssim\|g\|_{L^2(\mu)}.
\end{equation}
If
 $\mu^{*n}\in L^\infty (\bbR^d)$ then \eqref{adjrestriction} for
 $p=\frac{2n}{2n-1}$  follows
 from  a special case  of an inequality in  \cite{chen}, namely
\Be\label{chenineq}
\|\widehat {g\mu}\|_{2n}^{2n}  \lc
\|\mu^{*n}\|_\infty \|g\|_{L^2(\mu)}^{2n}\,.
\Ee
In conjunction with
Theorem \ref{intro:theorem-C} this  proves
 Theorem \ref{intro:theorem-A}. \qed

\subsection{\it Multipliers of Bochner-Riesz type}\label{sec:M}

For $p\le q\le 2$ we formulate $L^p\to L^q$ versions of the multiplier theorem  \ref{intro:multiplier} stated in the introduction. The main result is

\begin{theorem}\label{prop:multiplier}
Let $1\le p\le q\le 2$, and let
$N>d(1/q-1/2)$ be an integer. Let $\mu$ be  a Borel probability measure on $\mathbb R^d$, and assume that
the Fourier restriction theorem holds,
i.e.
\Be\label{eq:FRL2}
\sup_{\|f\|_p\le 1} \Big(\int|\widehat f|^2d\mu\Big)^{1/2} \le A_p<\infty.\Ee
For  $r\le 1$ let
\Be\label{ups}\varups(r)= \sup_{x\in \bbR^d} \mu(B(x,r)),\Ee
and let  $\eta_r \in C^\infty$ be supported in $\{\xi:
r/4\le|\xi|\le r\}$  and
satisfy the differential inequalities
$r^{|\beta|}\|\partial^\beta \eta_r\|_\infty \le 1$ for all multiindices $\beta$ with $|\beta|\le N.$
Let $$h=\eta_r*\mu.$$
Then, for all $f\in L^p(\bbR^d)$,
\Be\label{multpqestimate}
\big\|\cF^{-1}[ h \widehat f]\big\|_q
\lc r^{d-\frac dq}A_p (\varups(r))^{1/2} \|f\|_p
\Ee
where the implicit constant is independent of $r$ and $\eta$.
\end{theorem}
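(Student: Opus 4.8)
The plan is to split $\mu$ along a lattice of cubes of mesh comparable to $r$; this single device manufactures all three factors in \eqref{multpqestimate} at once: the masses of the pieces of $\mu$ produce $\varups(r)^{1/2}$, one application of the restriction hypothesis \eqref{eq:FRL2} produces $A_p$, and the spatial scale $r$ of $\eta_r$ produces $r^{d-d/q}$. Fix a tiling $\mathbb{R}^d=\bigcup_k Q_k$ by disjoint cubes of side $r/\sqrt d$, with centre $c_k$, so that $Q_k\subset B(c_k,r)$; put $\mu_k=\mu|_{Q_k}$, so $\mu=\sum_k\mu_k$ with $\mu_k(\mathbb{R}^d)=\mu(Q_k)\le\varups(r)$, and set $T_kf=\cF^{-1}[(\eta_r*\mu_k)\widehat f]$, so that $\cF^{-1}[h\widehat f]=\sum_k T_kf$. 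Since $\eta_r$ is supported in $\{|\xi|\le r\}$, each $\widehat{T_kf}$ is supported in a cube $Q_k^*$ of side $\lesssim_d r$ centred at $c_k$, and the $Q_k^*$ have bounded overlap.

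Next I rewrite each piece. Substituting $\xi=\eta+\zeta$ in $T_kf(x)=\int e^{2\pi i\eta\cdot x}\cdots$, set $f_\eta(z):=e^{-2\pi i\eta\cdot z}f(z)$ and $g_\eta:=\cF^{-1}\eta_r*f_\eta$; then
\[ T_kf(x)=\int_{Q_k}e^{2\pi i\eta\cdot x}\,g_\eta(x)\,d\mu(\eta). \]
The elementary identity $g_\eta(x)=\widehat{G_x}(\eta)$ holds with $G_x(y):=\widehat{\eta_r}(y-x)\,f(y)$, as one checks by expanding $\widehat{G_x}(\eta)=\int e^{-2\pi i\eta\cdot y}\widehat{\eta_r}(y-x)f(y)\,dy$ and carrying out the $y$-integration. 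Hence, by \eqref{eq:FRL2} applied to the function $G_x\in L^p$,
\[ \int|g_\eta(x)|^2\,d\mu(\eta)=\|\widehat{G_x}\|_{L^2(\mu)}^2\le A_p^2\|G_x\|_p^2=A_p^2\!\int|\widehat{\eta_r}(y-x)|^p|f(y)|^p\,dy, \]
and this is the only place restriction enters — once, not squared.

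To recombine the pieces I use that for $q\le 2$, and a family whose Fourier supports $Q_k^*$ have bounded overlap, the dual Littlewood–Paley (Rubio de Francia) inequality for a single–scale lattice gives $\bigl\|\sum_k T_kf\bigr\|_q\lesssim\bigl\|(\sum_k|T_kf|^2)^{1/2}\bigr\|_q$. Cauchy–Schwarz in the $\mu$-integral over $Q_k$ yields the pointwise bound $|T_kf(x)|^2\le\mu(Q_k)\int_{Q_k}|g_\eta(x)|^2d\mu(\eta)\le\varups(r)\int_{Q_k}|g_\eta(x)|^2d\mu(\eta)$, so, summing over the disjoint $Q_k$,
\[ \sum_k|T_kf(x)|^2\le\varups(r)\int|g_\eta(x)|^2\,d\mu(\eta)\le\varups(r)\,A_p^2\,\|G_x\|_p^2. \]
Taking $L^q$ norms in $x$ and noting $\|G_x\|_p^p=(|\widehat{\eta_r}|^p*|f|^p)(x)$, Young's inequality with the $L^1$ factor $|f|^p$ gives $\bigl\|\,\|G_\cdot\|_p\bigr\|_{L^q_x}\le\|\widehat{\eta_r}\|_{L^q}\|f\|_p\lesssim r^{d-d/q}\|f\|_p$, since $\eta_r$ is supported in a ball of radius $r$ with $\|\eta_r\|_\infty\le1$. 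Combining, $\bigl\|\cF^{-1}[h\widehat f]\bigr\|_q\lesssim r^{d-d/q}A_p\,\varups(r)^{1/2}\|f\|_p$.

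The main obstacle is the sharp smoothness dependence: the hypothesis provides only $N>d(1/q-1/2)$ derivatives of $\eta_r$, whereas the naive execution of the last step (bounding $\|\widehat{\eta_r}\|_{L^q}$ by $N$ integrations by parts, giving decay $|\widehat{\eta_r}(y)|\lesssim r^d(r|y|)^{-N}$) would demand $N>d/q$. At $q=2$ there is no loss at all: $\|\widehat{\eta_r}\|_{L^2}=\|\eta_r\|_{L^2}\lesssim r^{d/2}$ by Plancherel, and the decomposition, the Cauchy–Schwarz step, and the $L^2$ Littlewood–Paley inequality are all smoothness-free. For $q<2$ the sharp count is recovered by not discarding the oscillation $e^{2\pi i\eta\cdot x}$ in $T_kf$ — that is, by genuinely exploiting the near-disjointness of the frequency supports $Q_k^*$ when estimating $\sum_k|T_kf|^2$ (equivalently, by splitting $\widehat{\eta_r}$ into the shells $\{|y|\sim2^j/r\}$ and balancing the $L^2$-mass bound $\int|\widehat{\eta_r}|^2\lesssim r^d$ against the decay $|\widehat{\eta_r}(y)|\lesssim r^d(r|y|)^{-N}$, and interpolating the resulting range of $q$ against the smoothness-free endpoint $q=2$). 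This bookkeeping, together with the (standard) justification of the single–scale Littlewood–Paley inequality for a lattice of congruent cubes, is the technical heart of the proof; the remainder is Cauchy–Schwarz, Young's inequality, and one invocation of \eqref{eq:FRL2}.
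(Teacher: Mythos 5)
Your skeleton (tile the frequency space by cubes of side $\sim r$, write $T_kf(x)=\int_{Q_k}e^{2\pi i\eta\cdot x}g_\eta(x)\,d\mu(\eta)$, apply Cauchy--Schwarz in $\mu$ and the modulated restriction hypothesis once) is sound and does prove the theorem at $q=2$, where your Littlewood--Paley step is Plancherel and $\|\widehat{\eta_r}\|_2\lc r^{d/2}$ needs no derivatives. But for $q<2$ the proof is not complete, and the gap is exactly the one you flag yourself: your endgame reduces everything to $\|\widehat{\eta_r}\|_{L^q}$, and controlling that quantity by $r^{d-d/q}$ genuinely costs $N>d/q$ derivatives, not $N>d(1/q-1/2)$. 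The remedy you sketch does not close the gap: interpolating in $q$ between the derivative-free endpoint $q=2$ and an endpoint $q_0<2$ obtained from $\|\widehat{\eta_r}\|_{q_0}$ requires that $q_0$ endpoint to hold with $N>d/q_0$, and since $q_0\le q$ this is a \emph{stronger} hypothesis than $N>d/q$; the phrase ``not discarding the oscillation / exploiting near-disjointness of the $Q_k^*$'' is a hope, not an argument, because after the pointwise bound $\sum_k|T_kf(x)|^2\le\varups(r)\int|g_\eta(x)|^2d\mu(\eta)\le\varups(r)A_p^2\|G_x\|_p^2$ the oscillation is already gone. A secondary issue: the dual one-scale Littlewood--Paley inequality $\|\sum_kT_kf\|_q\lc\|(\sum_k|T_kf|^2)^{1/2}\|_q$ for a lattice of congruent cubes is standard only for $1<q\le 2$ (by duality with the square-function bound on $L^{q'}$, $q'<\infty$); the theorem as stated includes $q=1$, where this step fails.

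The missing mechanism --- and the route the paper takes --- is a Fefferman--Stein decomposition in \emph{physical} space rather than a frequency-space tiling of $\mu$. One writes $h=\sum_{n\ge0}h_n$ with $\cF^{-1}[h_n]=\cF^{-1}[h]\,\Phi_{n,r}$ supported where $|x|\sim 2^n/r$. Three facts then combine: (a) since $\cF^{-1}[h_n]$ is supported in a ball of radius $2^nr^{-1}$, a tiling of $\bbR^d$ by cubes of that side length plus H\"older gives $\|\cF^{-1}[h_n]*f\|_q\lc(2^nr^{-1})^{d(1/q-1/2)}\|\cF^{-1}[h_n]*f\|_2$ --- this Bernstein-type step is where the exponent $d(1/q-1/2)$, rather than $d/q$, comes from; (b) the $L^2$ norm is controlled by $\|h_n\|_\infty^{1/2}\big(\int|\widehat f|^2|h_n|\big)^{1/2}$ and the modulated restriction hypothesis, yielding $A_p$, $\varups(r)^{1/2}$ and $r^{d/2}$; (c) the $N$ derivatives of $\eta_r$ enter only through the vanishing moments of $\widehat\Phi_{n,r}$ for $n\ge1$, giving the factors $2^{-nN}$ in $\|h_n\|_\infty$ and in $\|\eta_r*\widehat\Phi_{n,r}\|_1$, so the geometric series over $n$ converges precisely when $N>d(1/q-1/2)$. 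Your argument has analogues of (b) but no analogue of (a), which is why it cannot reach the stated derivative threshold; I would either adopt the spatial decomposition outright or restrict your write-up to the $q=2$ case.
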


\begin{proof} The proof is an adaptation of the argument by Fefferman and Stein in
 \cite{fefferman}.
Let $\Phi\in C^\infty(\bbR^d)$  supported in $\{x, |x|\le 1\}$ so that $\Phi(x)=1$ for $|x|\le 1/2$. Let
\begin{align*}
&\Phi_{0,r}(x)= \Phi(rx),\\
&\Phi_{n,r}(x)= \Phi(2^{-n}rx)-\Phi(2^{-n+1}rx), \quad n\ge 1. \end{align*}
 Then we decompose
$h=\sum_{n\ge 0} h_n$ where
$\cF^{-1}[h_n](x)= \cF^{-1}[h](x)\Phi_{n,r}(x).$

We first examine the $L^\infty$ norm of $h_n= h*\widehat \Phi_{n,r}$.
Observe, by the support property of $\eta_r$ and $\|\eta_r\|_\infty\le 1$,
$$|h(\xi)|\le \mu(B(\xi,r)) \le \varups(r).$$
Moreover,
$$|h_{n}(\xi)|\le \varups(r)  \int|\widehat \Phi_{n,r}(y)| dy
 \lc \varups(r)$$
since the $L^1$ norm of
$\widehat \Phi_{n,r}$ is uniformly bounded in $n$ and $r$.
For $n\ge 1$ the last  estimate can be improved since then $ \Phi_{n,r}$ vanishes near $0$ and therefore  all moments of $\widehat \Phi_{n,r}$ vanish. This allows us to write
\begin{align*}
h_n(\xi)&=\int \widehat \Phi_{n,r}(y)\int \big[
\eta_r(\xi-w-y)-\sum_{j=0}^{N-1}\frac{1}{j!}
(\inn{y}{\nabla})^j\eta (\xi-w)\big] d\mu(w)\, dy
\\
&=\int_0^1\frac{(1-s)^{N-1}}{(N-1)!} \int \widehat \Phi_{n,r}(y)\int
 (\inn{y}{\nabla})^N\eta_r (\xi-sy-w) d\mu(w)\, dy\, ds.
\end{align*}
Assuming $N_1>N+d$, this gives
$$|h_n(\xi)|\le C(N_1) \varups(r) \int \big(\frac{|y|}{r}\big)^N
\frac{(2^n/r)^d}{(1+2^n|y|/r)^{N_1}} dy\,
$$
and then
\Be\label{Linftymn}
\|h_n\|_\infty  \le C_N 2^{-nN} \varups (r)\,.
\Ee

Since
$\cF^{-1}[h_n]$ is supported on a ball of radius $2^nr^{-1}$ we get the estimate
\Be \label{q2}\|\cF^{-1}[h_n]*f \|_q
\lc(2^nr^{-1})^{d(\frac 1q-\frac 12)}
\|\cF^{-1}[h_n]*f \|_2\,.
\Ee
To see this one decomposes $f= \sum_Q f_{Q,n}$ where the cubes $Q$ form a grid
of cubes of sidelength $2^n/r$ with $f_Q$ supported in $Q$, and
$\cF^{-1}[h_n]*f $ supported in the corresponding double cube. In view of this support property
$\|\sum_Q\cF^{-1}[h_n]*f \|_q\le C_d
(\sum_Q\|\cF^{-1}[h_n]*f \|_q^q)^{1/q}$ and \eqref{q2} follows by H\"older's inequality.

Next, by Plancherel's theorem,
$$
\|\cF^{-1}[h_n]*f \|_2^2=  \|h_n\widehat f\|_2^2
\le \|h_n\|_\infty
\int |\widehat f(\xi)|^2 |h_n(\xi)| d\xi
$$
and
\begin{align*}
&\int |\widehat f(\xi)|^2 |h_n(\xi)| d\xi\le
\int |\widehat f(\xi)|^2
\int |\eta_r*\widehat\Phi_{n,r}(\xi-w)| d\mu(w)  d\xi
\\
&\,=\int |\eta_r*\widehat\Phi_{n,r}(\xi)| \int
| \widehat f(\xi+w)|^2 d\mu(w)  d\xi \,\le \, A_p^2
\|\eta_r*\widehat \Phi_{n,r}\|_1
\|f\|_p^2
\end{align*}
where for the last inequality we have applied
 the assumed Fourier restriction inequality to
the function $f e^{-i\inn{w}{\cdot}}$.

Now $\|\eta_r*\widehat \Phi_{n,r}\|_1 \lc \|\eta_r\|_1 \lc r^d$ and for
$n\ge 1$, we also get
(using Taylor's theorem as above)
\[
\|\eta_r*\widehat \Phi_{n,r}\|_1 \le
\int
\|\inn{y}{\nabla}^N\eta\|_1
|\widehat \Phi_{n,r}(y)| dy
\lc 2^{-Nn}r^d.\]
The  above estimates  yield
\begin{align*}\|\cF^{-1}[h_n]*f \|_2 &\lc \|m_n\|_\infty^{1/2}
2^{-nN/2}r^{d/2}  A_p \|f\|_p
\\&\lc 2^{-nN} r^{d/2}\sqrt{\varups(r)} A_p\|f\|_p\,,
\end{align*}
by \eqref{Linftymn}. We combine this with  \eqref{q2} to get
$$\|\cF^{-1}[h_n]*f \|_q \lc
2^{-n(N-d(\frac 1q-\frac 12))} r^{d-d/q} \sqrt{\varups(r)} A_p\|f\|_p\,,
$$
and finish by summing in $n$.
\end{proof}

\medskip

As a corollary we get one direction of the statement in Theorem \ref{intro:multiplier}
for the multiplier $m_\la$ as in \eqref{intro:mla}
\begin{corollary}\label{cor:multiplier}
Let  $\mu$ be  a Borel probability measure on $\mathbb R^d$, $\varups$ as in
\eqref{ups} and assume that $\varups(r)\le C_\eps r^{\alpha-\eps}$ for all $\eps>0$.
Let $\chi\in C^\infty_c(\mathbb R^d)$ and define, for  $\la>0$,
$$m_\la(\xi)=\int_{\mathbb R^d}\chi(\xi-\eta)|\xi-\eta|^{\la-\alpha}d\mu(\eta).$$
Assume that  $1\le p\le q\le 2 $ and that \eqref{eq:FRL2} holds.
Then the inequality
\begin{equation}\label{eq:BRmultiplier}
\|\mathcal F^{-1}[m_\la \widehat f]\|_q\lesssim \|f\|_p
\end{equation}
holds for  $\la>d(\frac{1}{q}-\frac{1}{2})-\frac{d-\alpha}{2}$.

If in addition $$\int_0^1 [t^{-\alpha}\varups(t)]^{1/2} \frac{dt}{t}<\infty,$$ then
\eqref{eq:BRmultiplier} holds for
$\la\ge d(\frac{1}{q}-\frac{1}{2})-\frac{d-\alpha}{2}$.
\end{corollary}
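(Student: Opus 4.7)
The plan is to decompose $m_\la$ dyadically in $|\xi-\eta|$ near the singularity and apply Theorem \ref{prop:multiplier} to each annular piece. I would fix a radial bump $\phi\in C_c^\infty(\bbR^d)$ supported in $\{1/4\le |\zeta|\le 1\}$ with $\sum_{j\ge 1}\phi(2^j\zeta)=1$ for $0<|\zeta|\le 1$, complete this to a partition of unity on the support of $\chi$ by a smooth piece $\phi_0$ vanishing near the origin, and set
\[
K_j(\zeta):=\chi(\zeta)|\zeta|^{\la-\alpha}\phi(2^j\zeta),\ \ j\ge 1,\qquad K_0(\zeta):=\chi(\zeta)|\zeta|^{\la-\alpha}\phi_0(\zeta),
\]
so that $m_\la=\sum_{j\ge 0}K_j*\mu$. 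The bulk piece $K_0*\mu$ is a Schwartz function (a Schwartz function convolved with a compactly supported probability measure), so the corresponding operator is bounded $L^p\to L^q$ for every $1\le p\le q\le \infty$ with no use of the restriction hypothesis.

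For the annular pieces, a Leibniz rule computation shows that $K_j$ is supported in $\{r/4\le|\zeta|\le r\}$ with $r=2^{-j}$ and satisfies $r^{|\beta|}\|\partial^\beta K_j\|_\infty\lc r^{\la-\alpha}$ for $|\beta|\le N$, so a constant multiple of $r^{-(\la-\alpha)}K_j$ is an admissible $\eta_r$ in Theorem \ref{prop:multiplier}. That theorem then yields
\[
\|\cF^{-1}[(K_j*\mu)\widehat f]\|_q \lc r^{\la-\alpha+d-d/q}\,(\varups(r))^{1/2}\|f\|_p.
\]
A short arithmetic check with $\la_c:=d(1/q-1/2)-(d-\alpha)/2$ gives $\la-\alpha+d-d/q+\tfrac12(\alpha-\eps)=\la-\la_c-\eps/2$ and $\la_c-\alpha+d-d/q=-\alpha/2$, which are the two identities that drive the summation.

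Summing over $j\ge 1$ completes the argument. Under the hypothesis $\varups(r)\le C_\eps r^{\alpha-\eps}$, the bound above reads $r^{\la-\la_c-\eps/2}\|f\|_p$; for $\la>\la_c$ I choose $\eps<2(\la-\la_c)$ and the sum in $j$ is a convergent geometric series. At the endpoint $\la=\la_c$ the remaining factor is exactly $r^{-\alpha/2}(\varups(r))^{1/2}$, and by monotonicity of $\varups$ the discrete sum $\sum_{j\ge 1}2^{j\alpha/2}(\varups(2^{-j}))^{1/2}$ is comparable to the Dini-type integral $\int_0^1(t^{-\alpha}\varups(t))^{1/2}\frac{dt}{t}$, whose finiteness is the extra hypothesis. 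There is no substantial obstacle, since the deep analysis has been packaged into Theorem \ref{prop:multiplier}; the only mildly nontrivial step is verifying the differential inequalities on $K_j$ so that it meets the normalization of $\eta_r$, and—in the endpoint case—the standard monotonicity argument converting the series to the Dini integral.
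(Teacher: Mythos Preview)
Your proposal is correct and follows essentially the same approach as the paper: dyadically decompose $\chi(\zeta)|\zeta|^{\la-\alpha}$ into pieces localized at scale $r=2^{-j}$, normalize each piece to fit the hypotheses of Theorem \ref{prop:multiplier}, and sum the resulting bounds $r^{\la-\alpha+d-d/q}(\varups(r))^{1/2}$. Your treatment is in fact more explicit than the paper's, which compresses the argument into a few lines; in particular you spell out the handling of the bulk piece $K_0*\mu$ and the comparison of the discrete sum with the Dini integral at the endpoint, both of which the paper leaves implicit.
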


\begin{proof}
Decompose $\chi(\xi)|\xi|^{\la-\alpha}= \sum_{j=0}^\infty 2^{-j(\la-\alpha)} \eta_j(\xi)$ where (for a suitable constant $C_N$) the function $C_N^{-1} \eta_j$
satisfies the assumption of Proposition \ref{prop:multiplier} with $r=2^{-j}$.
Thus
$$\|2^{-j(\la-\alpha)}\eta_j*\mu\|_{M_p^q}\lc
2^{-j(\la-d(\frac 1q-\frac 12)+ \frac{d-\alpha}{2})}  \sqrt{2^{-j\alpha} \varups(2^{-j})}.
$$
The corollary follows.
\end{proof}

We now discuss the necessity of the condition on $\la$.
One  may test the convolution operator on a Schwartz function whose Fourier transform equals $1$ on the (compact)  support of $m_\la$.
Therefore,
the condition $m_\la\in M_p^q$ implies
$\cF^{-1}[m_\la]\in L^q$.

\begin{lemma}
Let $\mu$ be a Borel measure supported on a set of Hausdorff dimension $\alpha$ and assume that   $|\widehat\mu(x)|\le C_\gamma(1+|x|)^{-\gamma/2}$ for every  $\gamma<\alpha$.
Let $\la>\alpha-d$, $m_\la$ be as in \eqref{intro:mla},
and $\chi\in C^\infty_c$ with $\widehat \chi$ nonnegative and $\widehat\chi(0)>0$.
Let $K_\la=\cF^{-1} [m_\la]$, $1\le q\le 2$, and assume
$K_\la\in L^q$. Then
$\la\ge d(\frac{1}{q}-\frac{1}{2})-\frac{d-\alpha}{2}$.
\end{lemma}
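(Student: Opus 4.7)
My plan is to realize $K_\la$ as a product, extract its precise pointwise decay at infinity, and then use $\dim_{\rm H}\supp\mu\le\alpha$ to force a Riesz energy to diverge. A direct Fubini computation gives
\[
K_\la(x)\,=\,G_\la(x)\,\cF^{-1}[\mu](x),\qquad G_\la(x)\,:=\,\cF^{-1}\big[\chi(\cdot)|\cdot|^{\la-\alpha}\big](x).
\]
The hypotheses $\widehat\chi\ge 0$, $\widehat\chi(0)>0$ force $\chi$ to be real, even, and $\chi(0)=\int\widehat\chi\,dy>0$. Since the Fourier transform of $|\xi|^{\la-\alpha}$ equals $c_0|x|^{\alpha-\la-d}$ with $c_0>0$ when $\alpha-d<\la<\alpha$, I would write $G_\la=c_0\,\widehat\chi\ast|\cdot|^{\alpha-\la-d}$ and split the convolution into $|y|\le|x|/2$ versus $|y|>|x|/2$. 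Using the rapid decay of $\widehat\chi\in\cS$, this yields the pointwise asymptotic $G_\la(x)=c_0\chi(0)|x|^{\alpha-\la-d}(1+o(1))$ as $|x|\to\infty$, hence $|K_\la(x)|\gtrsim |x|^{\alpha-\la-d}|\widehat\mu(x)|$ on $\{|x|\ge R_0\}$ (using that $\mu$ is real so $|\cF^{-1}[\mu]|=|\widehat\mu|$).

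Next I would fix $\gamma<\alpha$ and decompose dyadically into annuli $A_j=\{x:2^j\le|x|<2^{j+1}\}$:
\[
\|K_\la\|_q^q \;\gtrsim\; \sum_{j\ge j_0} 2^{jq(\alpha-\la-d)} \int_{A_j}|\widehat\mu(x)|^q\,dx.
\]
For $1\le q\le 2$, the interpolation $\|f\|_2^2\le\|f\|_q^q\|f\|_\infty^{2-q}$ on $A_j$, combined with $\|\widehat\mu\|_{L^\infty(A_j)}\le C_\gamma 2^{-j\gamma/2}$, gives $\int_{A_j}|\widehat\mu|^q\,dx\ge c_\gamma 2^{j\gamma(2-q)/2}\int_{A_j}|\widehat\mu|^2\,dx$. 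Summing in $j$ and comparing with a continuous integral yields
\[
\|K_\la\|_q^q \;\ge\; c_\gamma\int_{|x|\ge R_0}|\widehat\mu(x)|^2\,|x|^{-\beta}\,dx, \qquad \beta\,:=\,q(\la+d-\alpha)-\tfrac{\gamma(2-q)}{2}.
\]

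By Parseval, the last integral equals a positive multiple of the Riesz energy $\iint|y-z|^{-(d-\beta)}d\mu(y)d\mu(z)$; since $\mu$ is a nonzero positive measure with $\dim_{\rm H}\supp\mu\le\alpha$, Frostman's theorem forces this energy to be infinite whenever $d-\beta>\alpha$, i.e., $\beta<d-\alpha$. A short calculation shows that the strict inequality $\la<d(\tfrac 1q-\tfrac 12)-\tfrac{d-\alpha}{2}$ is precisely the condition that allows one to choose $\gamma<\alpha$ with $\beta<d-\alpha$; in that case $\|K_\la\|_q=\infty$, contradicting $K_\la\in L^q$. Hence $\la\ge d(\tfrac 1q-\tfrac 12)-\tfrac{d-\alpha}{2}$. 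The main technical step will be the pointwise asymptotic of $G_\la$ with the correct positive leading constant, which relies in an essential way on the positive-definite structure of $\chi$ encoded by $\widehat\chi\ge 0$; the rest is interpolation and bookkeeping.
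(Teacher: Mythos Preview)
Your argument is correct and parallels the paper's proof closely: both establish the pointwise lower bound $|K_\la(x)|\gtrsim |x|^{\alpha-\la-d}|\widehat\mu(x)|$ for large $|x|$ (using the positivity of $\widehat\chi$ and $\chi(0)>0$), and both ultimately contradict the divergence of the Riesz energy $\cE_s(\mu)$ for $s>\alpha$. The only difference is in how the Fourier decay hypothesis is deployed. The paper uses it \emph{pointwise}, replacing $|x|^{\alpha-\la-d}$ by a constant multiple of $|\widehat\mu(x)|^{2(\la+d-\alpha)/\gamma}$, so that $K_\la\in L^q$ forces $\widehat\mu\in L^r$ for $r$ just above $q(1+2(\la+d-\alpha)/\alpha)$, and then quotes Salem's threshold $\widehat\mu\in L^r\Rightarrow r\ge 2d/\alpha$ (which is itself proved by H\"older and the energy integral). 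You instead use the decay as an $L^\infty$ bound on dyadic annuli together with the log-convexity inequality $\|f\|_2^2\le\|f\|_q^q\|f\|_\infty^{2-q}$ to pass directly to the weighted $L^2$ integral and identify it as an energy. These are two equivalent ways of carrying out the same H\"older step; your route is a bit more explicit about the energy, the paper's a bit more compact.
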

\begin{proof}
We argue as in Mockenhaupt
\cite{mockenhaupt}.
The positivity conditions on $\chi$ and formulas for fractional integrals imply that for $\ga<\alpha$ there exist $c>0$, $c_\ga>0$, such that   for $|x|\ge 1$
$$|K_\la(x)|\ge c |x|^{\alpha-\la-d}|\widehat \mu(x)| \ge c_\ga
|\widehat \mu(x)|^{1+\frac{2(\la+d-\alpha)}{\ga}}\,.$$
The second inequality follows by the assumption on 
$\widehat\mu$
and $\la>\alpha-d$. The displayed inequality and the condition $K_\la\in L^q$ implies $\widehat\mu\in L^r$, for
$r>q(1+2(\la+d-\alpha)\alpha^{-1})$.
It is shown  in \cite{salem} that $\widehat \mu \in L^r$ implies
$r\ge 2d/\alpha$; indeed this follows from the fact that $\dim_{\rm H}(\supp\,\mu)=\alpha$ implies that the energy integral
$I_\beta(\mu)$ is infinite for $\beta>\alpha$, and H\"older's inequality.
We now have the condition  $\frac{2d}{\alpha}
\le (1+ \frac {2(\la+d-\alpha)}{\ga})q$ which is equivalent with $\la\ge d(\frac{1}{q}-\frac{1}{2})-\frac{d-\alpha}{2}- (\alpha-\ga) (\frac{d}{\alpha q}-\frac 12)\,$. This holds for all $\ga<\alpha$ and the assertion follows.
\end{proof}

\subsection{\it Failure of Ahlfors-David regularity}
\label{failureAD}
Before closing this section, we note that the measures for which the endpoint
$L^{\frac{2d}{2d-\alpha}}\to L^2(\mu)$ restriction estimate holds
cannot be Ahlfors-David regular. This can be seen  as a consequence of a result of Strichartz \cite{strichartz}. For the convenience of the reader we give a short direct proof. We remark that some  related results also appear in the recent thesis by Senthil-Raani \cite{senthil-raani}.

\begin{proposition}\label{prop:strichartz-application}
Let $\mu$ be a Borel probability measure supported on a compact set $E\subset\mathbb R^d$ and let, for $\rho\ge 1$
$$\cB_\rho(\mu)\,=\,
\Big(\int_{\rho\le|\xi|\le 2\rho}
|\widehat\mu(\xi)|^{\frac{2d}{\alpha}} d\xi\Big)
^{\frac{\alpha}{2d}}. $$
Suppose that there exist $0<\alpha<d$ and a  constant $c>0$ such that
$$\mu(B(x,r))\ge c r^\alpha$$
for all $x\in E$ and $0<r<1$.
Then

(i) $\limsup_{\rho\to\infty} \cB_\rho(\mu)>0$.

(ii)  $\cF$ does not extend to a bounded operator from $L^{\frac{2d}{2d-\alpha}}(\bbR^d)$ to $L^2(\mu)$.
\end{proposition}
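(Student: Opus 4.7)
The plan is to prove (i) by a Strichartz-style Gaussian averaging argument that converts the pointwise lower bound $\mu(B(x,r))\ge cr^\alpha$ into a lower bound of order $R^{d-\alpha}$ for a Gaussian-weighted $L^2$-mass of $\widehat\mu$, and then to derive (ii) directly from (i) by testing the dual form of the restriction inequality against the constant function $f\equiv 1$.

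For (i), I would argue by contradiction and suppose $\cB_\rho(\mu)\to 0$ as $\rho\to\infty$; in particular $\cB_\rho\le M$ for some $M<\infty$. Let $G_R(x)=R^d e^{-\pi R^2|x|^2}$, so that $\widehat{G_R}(\xi)=e^{-\pi|\xi|^2/R^2}$. By Plancherel,
\begin{equation*}
\int|\widehat\mu(\xi)|^2 e^{-\pi|\xi|^2/R^2}\,d\xi=\iint G_R(x-y)\,d\mu(x)\,d\mu(y).
\end{equation*}
Since $G_R\ge R^d e^{-\pi}$ on $B(0,1/R)$ and $\mu(B(x,1/R))\ge cR^{-\alpha}$ for $x\in E$ and $R\ge 1$, the right-hand side is at least $cR^{d-\alpha}$. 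For the competing upper bound I would split the left-hand side into the unit ball and dyadic shells $A_k=\{2^k\le|\xi|<2^{k+1}\}$; H\"older's inequality gives
\begin{equation*}
\int_{A_k}|\widehat\mu|^2\,d\xi\le \cB_{2^k}^{2}\,|A_k|^{(d-\alpha)/d}\le C\,\cB_{2^k}^{2}\,2^{k(d-\alpha)},
\end{equation*}
while on $A_k$ one has $e^{-\pi|\xi|^2/R^2}\le e^{-\pi 4^{k}/R^2}$. Setting $R=2^K$, dividing by $2^{K(d-\alpha)}$, and substituting $j=k-K$ yields
\begin{equation*}
c\le C\,2^{-K(d-\alpha)}+C\sum_{j\ge -K} e^{-\pi 4^{j}}\,\cB_{2^{K+j}}^{2}\,2^{j(d-\alpha)}.
\end{equation*}
The summand is dominated uniformly in $K$ by $M^{2}e^{-\pi 4^{j}}2^{j(d-\alpha)}$, which is summable over $j\in\mathbb Z$ (double-exponential decay as $j\to+\infty$, and the factor $2^{j(d-\alpha)}\to 0$ as $j\to-\infty$ overcomes $e^{-\pi 4^{j}}\to 1$). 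Since $\cB_{2^{K+j}}\to 0$ for every fixed $j$, dominated convergence forces the right-hand side to $0$ as $K\to\infty$, contradicting the strictly positive $c$. Hence $\limsup_{\rho\to\infty}\cB_\rho(\mu)>0$.

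For (ii), if the restriction inequality $\mathcal F\colon L^{2d/(2d-\alpha)}(\mathbb R^d)\to L^2(\mu)$ held, then by duality $\|\widehat{f\,d\mu}\|_{L^{2d/\alpha}}\lesssim\|f\|_{L^2(\mu)}$, and applying this to $f\equiv 1\in L^2(\mu)$ would give $\widehat\mu\in L^{2d/\alpha}(\mathbb R^d)$. But from (i) I can thin to a sequence $\rho_n=2^{k_n}\to\infty$ with $k_{n+1}\ge k_n+1$ and $\cB_{\rho_n}\ge c_0>0$; the corresponding shells $\{\rho_n\le|\xi|<2\rho_n\}$ are disjoint, so $\|\widehat\mu\|_{L^{2d/\alpha}}^{2d/\alpha}\ge\sum_n\cB_{\rho_n}^{2d/\alpha}=\infty$, a contradiction. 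I expect the main obstacle to be the uniform-in-$K$ dominated convergence step in (i): one needs both the boundedness of $\cB_\rho$ (inherited from the contradiction hypothesis) and the bilateral summability of $e^{-\pi 4^j}2^{j(d-\alpha)}$, which is exactly what the Gaussian choice of smoothing delivers.
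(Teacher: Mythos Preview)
Your proof is correct and follows essentially the same approach as the paper's. The only cosmetic differences are that the paper uses a compactly supported bump $\chi$ in place of your Gaussian $G_R$ (yielding the weight $(1+|\xi|/R)^{-N}$ instead of $e^{-\pi|\xi|^2/R^2}$), and it extracts the contradiction in (i) by splitting the dyadic sum at $2^j\approx\sqrt{R}$ rather than by dominated convergence; the duality argument for (ii) is identical.
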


\begin{proof}
Let $\chi$ be a nonnegative   $C^\infty$ function so that $\chi(x)=1$ for $|x|\le 1$ and $\chi(x)=0$ for $|x|\ge 2$.
Let $R\gg1$ and observe that, by assumption,
\begin{align*}
cR^{-\alpha} &\le \int
\mu(B(x, R^{-1}))  d\mu(x)
\le
\iint  \chi(R(x-y)) d\mu(y) d\mu(x)\\
\,&=\inn{\widetilde \mu*\mu}{\chi(R\cdot)}
=  \int|\widehat\mu(\xi)|^2 R^{-d}\widehat \chi(R^{-1}\xi) d\xi\,.
\end{align*}
and therefore,
$R^{d-\alpha} \le C_N
\int|\widehat\mu(\xi)|^2 (1+R^{-1}|\xi|)^{-N} d\xi
$.
Let $\cA_0=B(0,1)$ and $\cA_j=B(0,2^j)\setminus B(0,2^{j-1})$ for $j\ge 1$.
Then
\begin{align}\notag
R^{d-\alpha} &\le C_N \Big(\int_{\cA_0} |\widehat\mu(\xi)|^2d\xi+
\sum_{j\ge 1} \min\{1, (2^{j-1}R^{-1})^{-N}\} \int_{\cA_j} |\widehat\mu(\xi)|^2d\xi\Big)
\\ \label{upperboundA2j}&\le C_N' \Big(1+
\sum_{j\ge 1} \min\{1, (2^jR^{-1})^{-N}\}
2^{j(d-\alpha)}  \cB_{2^j}(\mu)^2
\Big),
\end{align}
by H\"older's inequality.

Now, in order to prove (i) we argue by contradiction and assume that
(i) does not hold, i.e.
$\lim_{\rho\to\infty}  \cB_\rho(\mu)=0$.
Since $\mu$ is compactly supported the expressions
$\cB_\rho(\mu)$ are all finite and by our assumption it follows that
$\sup_\rho \cB_\rho(\mu) \le B<\infty$. We use
\eqref{upperboundA2j} for some $N>d-\alpha$ and obtain  for $R\ge 1$
$$R^{d-\alpha} \le C_{d,\alpha}\big( 1+B^2  R^{\frac{d-\alpha}2}  +
R^{d-\alpha} \sup_{\rho\ge \sqrt{R}}\cB_{\rho}(\mu)^2
\big)$$ and letting $R\to\infty$ this  yields a contradiction.

To prove (ii) we observe that by duality  \eqref{adjrestriction} holds with
$p'=2d/\alpha$. We take $g\in C^\infty_c$ so that $g=1$ on $\supp(\mu)$, and it follows that $\widehat \mu\in L^{2d/\alpha}$. This in turn implies
$\lim_{\rho\to\infty} \cB_\rho(\mu)=0$ in contradiction to the result in (i).
\end{proof}

\appendix
\section{Some standard probabilistic inequalities}
For the convenience of the reader we include the proof of some standard probabilistic inequalities used in this paper.
We will need the following version of Hoeffding's inequality,
a slight variant of the one in \cite{koerner2008}.

\begin{lemma}\label{lemma:hoeffding}
Let $\{W_j\}_{j=0}^m$ be a bounded real-valued martingale adapted to the filtration $\{\mathscr F_j\}_{j=0}^{m}$.
Suppose that $a_j>0$ for $1\le j\le m$ and that
$$\mathbb E\big[e^{\lambda (W_j-W_{j-1})}
|\mathscr F_{j-1}\big]\le e^{a_{j}^2\lambda^2/2} \text{ for all $|\lambda|<\delta$.}$$
Let  $A=\sum_{j=1}^m a_j^2.$
Then
\begin{subequations}\label{hoeffding}
\begin{align} \label{hoeffdinga}
\bbP\big(|W_m-W_0|\ge t\big)&\le
 2e^{-\frac{t^2}{2A}}, \quad
\text{ $0<t< A\delta$, }
\\  \label{hoeffdingb}
\bbP\big(|W_m-W_0|\ge t\big)&\le
2 e^{A\delta^2/2} e^{-\delta t},\quad\text{ $t\ge A\delta$. }
\end{align}
\end{subequations}
\end{lemma}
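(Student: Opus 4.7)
The plan is to apply the exponential Chebyshev inequality and use the tower property of conditional expectations to reduce the moment generating function of $W_m-W_0$ to a product of the single-step bounds, then optimize the resulting exponent in $\lambda$.

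First, for any $0<\lambda<\delta$ and $t>0$,
\[
\mathbb{P}(W_m-W_0\ge t)\,=\,\mathbb{P}\bigl(e^{\lambda(W_m-W_0)}\ge e^{\lambda t}\bigr)\,\le\, e^{-\lambda t}\,\mathbb{E}\bigl[e^{\lambda(W_m-W_0)}\bigr].
\]
Writing $W_m-W_0=\sum_{j=1}^m (W_j-W_{j-1})$ and conditioning successively on $\mathscr{F}_{m-1},\mathscr{F}_{m-2},\dots,\mathscr{F}_0$, the tower property combined with the hypothesis yields
\[
\mathbb{E}\bigl[e^{\lambda(W_m-W_0)}\bigr]\,\le\, \prod_{j=1}^m e^{a_j^2\lambda^2/2}\,=\, e^{A\lambda^2/2},\qquad |\lambda|<\delta.
\]
Therefore
\[
\mathbb{P}(W_m-W_0\ge t)\,\le\, \exp\!\bigl(-\lambda t+A\lambda^2/2\bigr),\qquad 0<\lambda<\delta.
\]

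Next I would optimize the right-hand side in $\lambda$. The function $\lambda\mapsto -\lambda t+A\lambda^2/2$ is minimized at $\lambda=t/A$. If $0<t<A\delta$, this optimizer lies in the admissible range $(0,\delta)$, and substituting it gives $-t^2/(2A)$, proving \eqref{hoeffdinga} for the upper tail. If instead $t\ge A\delta$, the unconstrained optimizer falls outside the admissible range; since the quadratic is decreasing on $(0,t/A)$, the best admissible choice is to take $\lambda\uparrow\delta$, giving $-\delta t+A\delta^2/2$ and proving \eqref{hoeffdingb} for the upper tail.

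The lower tail is handled identically by replacing $W_j$ with $-W_j$: the martingale property and the symmetric hypothesis (which is stated for all $|\lambda|<\delta$, not just $\lambda>0$) give the same bound for $\mathbb{P}(W_m-W_0\le -t)$. A union bound then introduces the factor of $2$, completing both \eqref{hoeffdinga} and \eqref{hoeffdingb}. No genuine obstacle arises here; the only point to watch is that $\lambda=t/A$ indeed satisfies $\lambda<\delta$ precisely when $t<A\delta$, which is exactly the dichotomy separating the two regimes in the statement.
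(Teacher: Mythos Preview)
Your proof is correct and follows essentially the same route as the paper: exponential Chebyshev, iterated conditioning to get $\mathbb{E}[e^{\lambda(W_m-W_0)}]\le e^{A\lambda^2/2}$, then the choice $\lambda=t/A$ or $\lambda\uparrow\delta$ according to the two regimes, with the lower tail handled by symmetry. Your limiting argument $\lambda\uparrow\delta$ for the case $t\ge A\delta$ is in fact slightly more careful than the paper's, which simply sets $\lambda=\delta$ even though the hypothesis is stated only for $|\lambda|<\delta$.
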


\begin{proof}
Observe that, if $0<\lambda<\delta$,
\begin{align*}
\mathbb Ee^{\lambda (W_m-W_0)}
&=\mathbb E\big[e^{\lambda (W_{m-1}-W_0)}\mathbb E[e^{\lambda (W_m-W_{m-1})}
|\mathscr F_{m-1}]\big]
\\&\le e^{a_m^2\lambda^2/2}\mathbb E\big[e^{\lambda (W_{m-1}-W_0)}\big].
\end{align*}
By iterating this step  we get
$\mathbb Ee^{\lambda (W_m-W_0)}\le e^{A\lambda^2/2}$.

Now $\mathbb P\{W_m-W_0\ge t\}
=\mathbb P\{e^{\lambda(W_m-W_0)}\ge e^{\lambda t}\}$ and
 Tshebyshev's inequality gives
$$\mathbb P\{W_m-W_0\ge t\}
\le e^{-\lambda t}\mathbb Ee^{\lambda(W_m-W_0)} \le e^{-\lambda t+A\lambda^2/2}.
$$
If $0<t<A\delta$ we set  $\lambda=t/A$, and if $t>A\delta$ we set $\la=\delta$.
For these choices the displayed inequality gives
\Be\label{Diffget}\mathbb P\big\{W_m-W_0\ge t\big\}\le
\begin{cases}  e^{-\frac{t^2}{2A}}
&\text{ for $0<t< A\delta$, }
\\
 e^{A\delta^2/2} e^{-\delta t}
&\text{ for $t\ge A\delta$. }
\end{cases}
\Ee
Similarly, still for $0<\la< \delta$,
$\mathbb P\big\{W_m-W_0\le -t\big\}
=\mathbb P\{e^{-\lambda(W_m-W_0)}\ge e^{\lambda t}\}$ and  argue as above
to see that
$\mathbb P\big\{W_m-W_0\le -t\big\}$ is also bounded by the right hand side
of \eqref{Diffget}. This implies the asserted inequality.
\end{proof}

To verify the assumption in Lemma
\ref{lemma:hoeffding} the following calculus inequality is useful.
(\cf.  \cite[Lemma~1]{hoeffding}).

\begin{lemma}\label{lemma:expofetx}
Let $X$ be a real-valued random variable with $|X|\le a<\infty$ and $\mathbb E[X|\mathscr F]=0$. Then for any $t\in\mathbb R$,
$$\mathbb E[e^{tX}|\mathscr F]\le e^{a^2 t^2/2}.$$
\end{lemma}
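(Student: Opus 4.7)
The plan is to use the classical convexity argument that underlies Hoeffding's lemma. The key observation is that since $|X|\le a$, every value of $X$ lies in the interval $[-a,a]$ and can be written as the convex combination
\[
X \;=\; \frac{a+X}{2a}\,a \,+\, \frac{a-X}{2a}\,(-a).
\]
Applying the convexity of the map $x\mapsto e^{tx}$ pointwise yields
\[
e^{tX}\;\le\;\frac{a+X}{2a}\,e^{ta}\,+\,\frac{a-X}{2a}\,e^{-ta}.
\]

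Next I would take conditional expectation with respect to $\mathscr F$ on both sides. Since the coefficients of $e^{ta}$ and $e^{-ta}$ are affine in $X$, and $\mathbb E[X|\mathscr F]=0$ by hypothesis, the $X$-terms drop out and one is left with
\[
\mathbb E[e^{tX}\mid \mathscr F]\;\le\;\frac{e^{ta}+e^{-ta}}{2}\;=\;\cosh(ta).
\]

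The remaining step — and the only place where anything needs checking — is the purely scalar inequality $\cosh(s)\le e^{s^2/2}$ for $s\in \mathbb R$. I would prove this by comparing Taylor series:
\[
\cosh(s)\;=\;\sum_{k=0}^\infty \frac{s^{2k}}{(2k)!}\;\le\;\sum_{k=0}^\infty \frac{s^{2k}}{2^k k!}\;=\;e^{s^2/2},
\]
using the elementary estimate $(2k)!\ge 2^k k!$, which follows from $(2k)!/k! = \prod_{j=1}^k (k+j) \ge \prod_{j=1}^k 2 = 2^k$ (or equivalently by induction). Setting $s=ta$ finishes the proof. There is no real obstacle here; the whole argument is an entirely standard three-line application of convexity followed by a Taylor-coefficient comparison, and it requires nothing from the rest of the paper.
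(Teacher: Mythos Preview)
Your proof is correct and follows essentially the same approach as the paper's: the convexity bound $e^{tX}\le \cosh(ta)+\tfrac{X}{a}\sinh(ta)$, conditional expectation to kill the $\sinh$ term, and then the Taylor-coefficient comparison $\cosh(s)\le e^{s^2/2}$ via $(2k)!\ge 2^k k!$. The only cosmetic difference is that the paper first normalizes to $a=1$ before carrying out the identical steps.
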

\begin{proof}
Replacing $t$ by $at$  and $X$ by $X/a$ it suffices to consider the
case  $a=1$.

By the convexity of the function $x\mapsto e^{tx}$, for $x\in [-1,1]$ we have
$$e^{tx} \le \frac{1-x}{2}e^{-t}+\frac{x+1}{2} e^t= \cosh t + x\sinh(t)$$
 and thus
$\mathbb E[e^{tX}|\mathscr F]\le \cosh t+ \sinh t\,
\mathbb E[X|\mathscr F]$. The last summand drops by assumption.
 Finally  use that $\cosh t\le e^{t^2/2}$ for all $t\in \bbR$ which
follows  by considering the power series and the inequality $(2k)!\ge 2^k k!$.
\end{proof}

A combination of Lemma \ref{lemma:hoeffding} and Lemma \ref{lemma:expofetx} yields

\begin{corollary}[Azuma-Hoeffding Inequality] \label{lemma:azuma}
Let $\{W_j\}_{j=0}^m$ be a bounded real-valued martingale adapted to filtration $\{\mathscr F_j\}_{j=0}^{m}$. For $1\le j\le m$ let $a_j>0$ and suppose that
  $|W_{j}-W_{j-1}|\le {a_j}$. Writing  $A=\sum_{j=1}^m a_j^2,$
we have
$$\mathbb P\big(|W_m-W_0|\ge t\big)\le 2e^{-\frac{t^2}{2A}}$$
for all $t>0$.
\end{corollary}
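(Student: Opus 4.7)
The plan is to deduce the Azuma--Hoeffding inequality as an immediate consequence of the two preceding results, Lemma \ref{lemma:hoeffding} and Lemma \ref{lemma:expofetx}, by verifying the hypothesis of the former via the latter.

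First I would set $X_j = W_j - W_{j-1}$ for $1 \le j \le m$. Since $\{W_j\}$ is a martingale adapted to $\{\mathscr{F}_j\}$, we have $\mathbb{E}[X_j \mid \mathscr{F}_{j-1}] = 0$, and by the hypothesis $|X_j| \le a_j$ almost surely. Applying Lemma \ref{lemma:expofetx} conditionally on $\mathscr{F}_{j-1}$ (with the random variable $X_j$ and the bound $a_j$ in place of $a$) yields
\begin{equation*}
\mathbb{E}\big[e^{\lambda(W_j - W_{j-1})} \mid \mathscr{F}_{j-1}\big] \le e^{a_j^2 \lambda^2/2}
\end{equation*}
for every $\lambda \in \mathbb{R}$, not just for $\lambda$ in some bounded interval.

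Since this estimate holds for \emph{all} real $\lambda$, the hypothesis of Lemma \ref{lemma:hoeffding} is satisfied for every $\delta > 0$. In particular, given any $t > 0$ we may choose $\delta$ so large that $t < A\delta$, and then inequality \eqref{hoeffdinga} yields
\begin{equation*}
\mathbb{P}\big(|W_m - W_0| \ge t\big) \le 2 e^{-t^2/(2A)},
\end{equation*}
which is the claimed bound.

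There is essentially no obstacle here; the work has all been done in the two lemmas. The only conceptual point to note is that because the subgaussian moment generating function bound provided by Lemma \ref{lemma:expofetx} is valid on all of $\mathbb{R}$ (rather than on a bounded neighborhood of the origin), only the subgaussian regime \eqref{hoeffdinga} of Lemma \ref{lemma:hoeffding} is invoked, and the exponential tail \eqref{hoeffdingb} is not needed.
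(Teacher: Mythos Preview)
Your proof is correct and follows exactly the approach indicated in the paper, which simply states that the corollary is obtained by combining Lemma \ref{lemma:hoeffding} with Lemma \ref{lemma:expofetx}. Your observation that the bound from Lemma \ref{lemma:expofetx} holds for all $\lambda\in\mathbb{R}$, so that $\delta$ may be taken arbitrarily large and only \eqref{hoeffdinga} is needed, is precisely the point.
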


As a consequence, we obtain a version of Bernstein's inequality.

\begin{corollary}[Bernstein's inequality] \label{lemma:bernstein}
Let $X_1,\cdots,X_m$ be
complex valued independent
random variables with $\mathbb EX_j=0$ and $|X_j|\le M\in (0,\infty)$ for all $j=1,\cdots,m$. Then,
for all $t>0$
$$\mathbb P\Big(\Big|\frac{1}{m}\sum_{j=1}^m X_j\Big|\ge Mt \Big)\le 4 e^{-{mt^2}/{4}}.$$
\end{corollary}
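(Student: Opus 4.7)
The plan is to reduce Bernstein's inequality to the Azuma--Hoeffding inequality just proved. The first step is to eliminate the complex-valued hypothesis. Writing $X_j = U_j + \ic V_j$ with $U_j = \Re X_j$ and $V_j = \Im X_j$, the bound $|X_j|\le M$ gives $|U_j|\le M$ and $|V_j|\le M$, and $\bbE U_j = \bbE V_j = 0$. Using the pointwise inequality $|z|^2 \le 2\max\{(\Re z)^2,(\Im z)^2\}$, the event $\{|\tfrac 1m \sum_j X_j|\ge Mt\}$ is contained in the union
\[
\Big\{\Big|\frac{1}{m}\sum_{j=1}^m U_j\Big|\ge \frac{Mt}{\sqrt 2}\Big\}\cup \Big\{\Big|\frac{1}{m}\sum_{j=1}^m V_j\Big|\ge \frac{Mt}{\sqrt 2}\Big\},
\]
so it suffices to bound the probability of each of these two events by $2e^{-mt^2/4}$.

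Next I would handle the real-valued case. Let $Y_1,\dots,Y_m$ denote either $U_1,\dots,U_m$ or $V_1,\dots,V_m$; in either case these are independent, mean-zero, and bounded by $M$ in absolute value. Set $W_0 = 0$ and $W_j = \sum_{k=1}^j Y_k$, with $\sF_j$ the $\sigma$-algebra generated by $Y_1,\dots,Y_j$. Independence and the mean-zero property give $\bbE[W_j - W_{j-1}\mid \sF_{j-1}]=\bbE Y_j = 0$, so $\{W_j\}_{j=0}^m$ is a martingale adapted to $\{\sF_j\}_{j=0}^m$. The bound $|W_j - W_{j-1}| = |Y_j|\le M$ shows it is a bounded martingale with uniform differences $a_j = M$, so $A = \sum_j a_j^2 = mM^2$.

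Applying the Azuma--Hoeffding inequality (Corollary~\ref{lemma:azuma}) with threshold $s = mMt/\sqrt 2$ yields
\[
\bbP\Big(\Big|\sum_{j=1}^m Y_j\Big| \ge \frac{mMt}{\sqrt 2}\Big)
\le 2\exp\Big(-\frac{(mMt/\sqrt 2)^2}{2mM^2}\Big)
= 2\exp\Big(-\frac{mt^2}{4}\Big),
\]
which is exactly the bound needed for each of the two events in the reduction. Summing the two estimates gives the desired factor of $4$. There is no real obstacle here; the only point requiring a little care is choosing the factor $\sqrt 2$ (rather than $2$) in the reduction step, which is precisely what is needed to match the constant $1/4$ in the exponent claimed in the statement.
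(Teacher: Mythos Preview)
Your argument is correct and is exactly the derivation the paper has in mind: the paper states this corollary without proof, only remarking that it is ``a consequence'' of the Azuma--Hoeffding inequality, and your reduction (splitting into real and imaginary parts with the $\sqrt{2}$ factor, then applying Corollary~\ref{lemma:azuma} with $a_j=M$, $A=mM^2$) is the natural way to carry this out and recovers the constants $4$ and $1/4$ precisely.
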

\smallskip

\noi{\it Acknowledgement.} X. Chen would like to thank Pablo Shmerkin for helpful discussions regarding convolution squares and Fourier decay properties of Cantor sets, and Malabika Pramanik for clarifying an example of R. Kaufman.


\end{document}